\numberwithin{equation}{section}
\newcounter{sarrow}
\renewcommand{\leq}{\leqslant}
\renewcommand{\geq}{\geqslant}
\def\subsection{\@startsection{subsection}{3}%
  \z@{.5\linespacing\@plus.7\linespacing}{.3\linespacing}%
  {\bfseries\centering}}
\def\subsubsection{\@startsection{subsubsection}{3}%
  \z@{.5\linespacing\@plus.7\linespacing}{.3\linespacing}%
  {\centering}}
\def\myfnt{\ifx\protect\@typeset@protect\expandafter\footnote\else\expandafter\@gobble\fi}
\theoremstyle{definition}
\newtheorem{theorem}{Theorem}[section]
\newtheorem{definition}[theorem]{Definition}
\newtheorem{lemma}[theorem]{Lemma}
\newtheorem{proposition}[theorem]{Proposition}
\newtheorem{corollary}[theorem]{Corollary}
\newtheorem{oproblem}[theorem]{Open Problem}
\newtheorem{remark}[theorem]{Remark}
\newcounter{claimcounter}
\numberwithin{claimcounter}{theorem}
\newcommand{\pureindep}[1][]{%
  \mathrel{
    \mathop{
      \vcenter{
        \hbox{\oalign{\noalign{\kern-.3ex}\hfil$\vert$\hfil\cr
              \noalign{\kern-.7ex}
              $\smile$\cr\noalign{\kern-.3ex}}}
      }
    }\displaylimits_{#1}
  }
}
\newcommand{\indep}[2]{%
  \mathrel{
    \mathop{
      \vcenter{
        \hbox{%
\oalign{
\noalign{\kern-.3ex}\hfil$\vert$\hfil\cr
              \noalign{\kern-.7ex}
              $\smile$\cr\noalign{\kern-.3ex}
}
}
      }
}^{\!\!\!\!\!#2}_{\!\!\hspace{-0.1em}#1}
  }
}
\newcommand{\displayindep}[2]{%
  \mathrel{
    \mathop{
      \vcenter{
        \hbox{%
\oalign{
\noalign{\kern-.3ex}\hfil$\vert$\hfil\cr
              \noalign{\kern-.7ex}
              $\smile$\cr\noalign{\kern-.3ex}
}
}
      }
}^{\!\!\hspace{-0.1em}#2}_{\!\!\hspace{-0.1em}#1}
  }
}
\newcommand{\displayfindep}[2]{%
  \mathrel{
    \mathop{
      \vcenter{
        \hbox{%
\oalign{
\noalign{\kern-.3ex}\hfil$\vert$\hfil\cr
              \noalign{\kern-.7ex}
              $\smile$\cr\noalign{\kern-.3ex} 
}
}
      }
}^{\!\hspace{-0.14em}#2}_{\!\!\hspace{-0.05em}#1}
  }
}
\newcommand{\card}[1]{|{#1}|}
\def\presuper#1#2%
\begin{document}
%%%%%%%%%%%%%%%%%%

\begin{abstract} We study classes of right-angled Coxeter groups with respect to the strong submodel relation of parabolic subgroup. We show that the class of all right-angled Coxeter group is not smooth, and establish some general combinatorial criteria for such classes to be abstract elementary classes, for them to be finitary, and for them to be tame. We further prove two combinatorial conditions ensuring the strong rigidity of a right-angled Coxeter group of arbitrary rank. The combination of these results translate into a machinery to build concrete examples of $\mathrm{AECs}$ satisfying given model-theoretic properties. We exhibit the power of our method constructing three concrete examples of finitary classes. We show that the first and third class are non-homogeneous, and that the last two are tame, uncountably categorical and axiomatizable by a single $L_{\omega_{1}, \omega}$-sentence. We also observe that the isomorphism relation of any countable complete first-order theory is $\kappa$-Borel reducible (in the sense of generalized descriptive set theory) to the isomorphism relation of the theory of right-angled Coxeter groups whose Coxeter graph is an infinite random graph.
%
%\smallskip
%\noindent \textbf{Keywords:} Model theory, abstract elementary classes, Coxeter groups. 
%
%\smallskip
%\noindent \textbf{MSC classes:} 03C48, 05E15.
\end{abstract}

\title[Coxeter Groups and $\mathrm{AECs}$: the Right-Angled Case]{Coxeter Groups and Abstract Elementary Classes: the Right-Angled Case}
\thanks{The research of the second author was supported by the Finnish Academy of Science and Letters (Vilho, Yrj\"o and Kalle V\"ais\"al\"a foundation). The authors would like to thank John Baldwin for interesting remarks on this paper.}

\author{Tapani Hyttinen}
\address{Department of Mathematics and Statistics,  University of Helsinki, Finland}

\author{Gianluca Paolini}
\address{Einstein Institute of Mathematics,  The Hebrew University of Jerusalem, Israel}

\date{\today}
\maketitle

%\tableofcontents

\section{Introduction}

	Abstract elementary classes ($\mathrm{AECs}$) \cite{shelah_abstr_ele_cla} are pairs $(\mathbf{K}, \preccurlyeq)$ such that $\mathbf{K}$ is a class of structures of the same similarity type, and $\preccurlyeq$ is a partial order on $\mathbf{K}$, often referred to as a {\em strong submodel relation}, satisfying a certain set of axioms, which generalise some of the properties of the relation of elementary submodel of first-order logic. Although $\mathrm{AECs}$ generalize the first-order setting, the situation in $\mathrm{AECs}$ is very different from the one in elementary model theory. In fact, in the latter setting the strong submodel relation is {\em always fixed}. The same remark holds for the model theory of infinitary languages, since also in this context one tends to use the canonical strong submodel relations (which in this case depend on what is the formula defining the class under study). On the other hand, in the theory of abstract elementary classes we are free to choose {\em any} strong submodel relation, as long as the $\mathrm{AECs}$ axioms are satisfied. This choice determines very strongly the model-theoretic properties of the class under analysis. A classical example is when we consider as $\mathbf{K}$ the class of all abelian groups. In this case, letting $\preccurlyeq_0$ to be the subgroup relation, and $\preccurlyeq_{1}$ to the pure subgroup relation, we have that $(\mathbf{K}, \preccurlyeq_0)$ is $\omega$-stable, while $(\mathbf{K}, \preccurlyeq_{1})$ is not even superstable.

	In the context of $\mathrm{AECs}$, when one tries to find examples of
various model-theoretic properties, one tends to start from a class $\mathbf{K}$ of structures, and then search for a suitable or natural strong submodel relation $\preccurlyeq$. In this paper we make an experiment, and reverse this process. That is, we first choose the relation $\preccurlyeq$ and then we try to find $\mathbf{K}$ so that $(\mathbf{K}, \preccurlyeq)$ satisfies certain given model-theoretic properties.  We hope that in this way we are able to increase our understanding of the vast number of dividing lines that currently dominate the universe of $\mathrm{AECs}$, and to generate new (counter-)examples for the theory. A similar approach has been pioneered in \cite{geometric_lattices}, where several well-behaved classes of geometric lattices have been found in this way, when considering as $\preccurlyeq$ the strong submodel relation of principal extension of a combinatorial geometry, arising from the work of Crapo \cite{crapo}. 

	In this case study, we consider the strong submodel relation of {\em parabolic subgroup}, from {\em geometric group theory}. The beginning of our study is the search for groups which together with the parabolic subgroup relation are $\mathrm{AECs}$ (i.e. the first property we test is the property of being an $\mathrm{AEC}$). We very quickly restricted our attention to classes consisting  of so-called {\em right-angled Coxeter groups}. These groups are in fact the most well-understood structures in geometric group theory. In particular, they satisfy a crucial requirement known as {\em rigidity} \cite{castella} (see  below\footnote{Notice that here rigidity does not mean what it usually means in model theory.}). However, it turns out that rigidity alone is not enough for our purposes. In fact, we will see that the Smoothness Axiom fails in general and, without additional assumptions, we do not even know whether $\preccurlyeq$ is transitive or not. We get out of this {\em empasse} assuming a stronger property, known as {\em strong rigidity}.

	While in the case of finitely generated right-angled Coxeter groups clear necessary and sufficient conditions are known for strong rigidity, not much is known about infinitely generated ones. What is known is basically just that in this more general setting these conditions are only necessary, but not sufficient. Thus, we start our study by giving two combinatorial conditions ensuring the strong rigidity of an arbitrary right-angled Coxeter group. These results will be used to construct three concrete examples of $\mathrm{AECs}$: $(\mathbf{K}_0, \preccurlyeq)$, $(\mathbf{K}_1, \preccurlyeq)$ and $(\mathbf{K}_2, \preccurlyeq)$.

	We continue our study by giving some general criteria for a class of strongly rigid right-angled Coxeter groups to be an abstract elementary class, and for it to satisfy the usual sufficient conditions for the construction of a monster model, i.e. amalgamation, joint embedding and arbitrarly large models. We then turn to notions that describe the behaviour of
Galois-types, namely {\em homogeneity}, {\em finitarity} and {\em tameness} (we will also point out that excluding the class of infinite vector spaces over the two element field, classes of infinite right-angled Coxeter groups are not first-order axiomatizable). Also in this case we give general criteria for the satisfaction of these properties, under the assumption of strong rigidity. The underlying theme of these general results is the reduction of model-theoretic properties of a class of right-angled Coxeter groups to {\em combinatorial conditions} on the associated graphs, the so-called Coxeter graphs. These conditions are often easy to realize, and, paired with our two general results on the strong rigidity of right-angled Coxeter groups, they translate into a machinery to build concrete examples of $\mathrm{AECs}$. The classes $(\mathbf{K}_0, \preccurlyeq~)$, $(\mathbf{K}_1, \preccurlyeq)$ and $(\mathbf{K}_2, \preccurlyeq)$ should be considered under this perspective, as explicit examples of this machinery. 

	We conclude the paper with a close analysis of these classes. First, we show that $(\mathbf{K}_0, \preccurlyeq)$, $(\mathbf{K}_1, \preccurlyeq)$ and $(\mathbf{K}_2, \preccurlyeq)$ are finitary. Then, we show that $(\mathbf{K}_0, \preccurlyeq)$ has the independence property (and thus it is unstable), while $(\mathbf{K}_1, \preccurlyeq)$ and $(\mathbf{K}_2, \preccurlyeq)$ are both tame and uncountably categorical (and thus stable in every infinite cardinality). Finally, we show that $(\mathbf{K}_0, \preccurlyeq)$ and $(\mathbf{K}_2, \preccurlyeq)$ are not homogeneous. We leave the tameness of $(\mathbf{K}_0, \preccurlyeq)$ and the homogeneity of $(\mathbf{K}_1, \preccurlyeq)$ as open questions. John Baldwin pointed out to us that by combining our results with results from \cite{kueker}, various definability results can be obtained. E.g. the classes $(\mathbf{K}_1, \preccurlyeq)$ and $(\mathbf{K}_2, \preccurlyeq)$ are axiomatizable by a single $L_{\omega_{1}, \omega}$-sentence, and over strong submodels Galois types and $L_{\omega_{1}, \omega}$-types coincide in both $(\mathbf{K}_1, \preccurlyeq)$ and $(\mathbf{K}_2, \preccurlyeq)$.

	On the way of writing this paper, we also observed that right-angled
Coxeter groups provide a way of finding a group whose first-order theory is maximal in the order of complexity that was introduced in the theory of generalized descriptive set theory \cite{Fr&Hy&Ku}. We will point out how one can see this.

\section{Coxeter Groups}

	Let $S$ be a set. A matrix $m: S \times S \rightarrow \{1, 2, . . . , \infty \}$ is called a {\em Coxeter matrix} if it satisfies
$$m(s, s') = m(s' , s);$$
$$m(s, s') = 1 \Leftrightarrow s = s'.$$
	Equivalently, $m$ can be represented by a labelled graph $\Gamma$, called a {\em Coxeter graph}, whose node set is $S$ and whose edges are the unordered pairs $\{s, s' \}$ such that $m(s, s') < \infty$, with label $m(s, s')$. (Notice that some authors refer to the Coxeter graph as the graph $\Gamma$ such that $s$ and $s'$ are adjacent iff $m(s, s ) > 2$.) 
Let $S^2_{fin} = \{(s, s') \in S^2 : m(s, s' ) < \infty \}$. A Coxeter matrix $m$ determines
a group $W$ with presentation
\begin{equation}
\begin{cases} \text{Generators}: S \\
				\text{Relations}: (ss')^{m(s,s')} = e, \text{ for all } (s, s' ) \in S^2_{fin}.
\end{cases} 
\end{equation}
If a group $W$ has a presentation such as (2.1), then the pair $(W, S)$ is
called a {\em Coxeter system} of type $m = m_{(W, S)}$ or of type $\Gamma = \Gamma_{(W, S)}$. The group $W = W_{\Gamma}$ is called a {\em Coxeter group} and the set $S$ a Coxeter basis (or Coxeter generating set) for $W$. The cardinality of $S$ is called the rank of $(W, S)$. Notice that in the present paper we {\em do not} assume that our Coxeter groups are of finite rank, as it is done in most of the literature on the subject. As well-known, the isomorphism type of $\Gamma_{(W, S)}$ is not determined by the group $W$ alone (see e.g. \cite[Chapter 1, Exercise 2]{bjorner}). This motivates the following definition.

	\begin{definition}\label{def_rigidity} Let $W$ be a Coxeter group. 
	\begin{enumerate}[(1)]
	\item We say that $W$ is {\em rigid} if for any two Coxeter bases $S$ and $S'$ for $W$ there is an automorphism $\alpha \in Aut(W)$ such that $\alpha(S) = S'$.
	\item We say that $W$ is {\em strongly rigid} if for any two Coxeter bases $S$ and $S'$ for $W$ there is an {\em inner} automorphism $\alpha \in Inn(W)$ such that $\alpha(S) = S'$.
\end{enumerate}
\end{definition}

	That is, $W$ is rigid if and only if for any two Coxeter bases $S$ and $S'$ for $W$
there exists an isomorphism of labelled graphs between $\Gamma_{(W, S)}$ and $\Gamma_{(W, S')}$. The problem of deciding whether  two non-isomorphic Coxeter graphs determine isomorphic Coxeter groups is known as the {\em isomorphism problem} for Coxeter groups. This problem is highly non-trivial, and it has been solved only partially \cite{bahls}. The most well understood class of Coxeter groups in this respect (and any other respect) is the class of so-called {\em right-angled} Coxeter groups. 

	\begin{definition} We say that a Coxeter system $(W, S)$ is {\em right-angled} if $m_{(W, S)}$ has coefficients in $\{ 1, 2, \infty \}$, and that a Coxeter group $W$ is right-angled if there exists a right-angled Coxeter system for $W$. 
\end{definition}

	\begin{theorem}[Castella \cite{castella}] The right-angled Coxeter groups are rigid.
\end{theorem}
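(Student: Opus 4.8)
Since the excerpt already records that $W$ is rigid precisely when any two Coxeter bases yield isomorphic \emph{labelled} graphs, the plan is to prove this graph-theoretic reformulation directly: given two right-angled bases $S$ and $S'$ of $W$, I would produce an isomorphism of labelled graphs $\Gamma_{(W,S)} \cong \Gamma_{(W,S')}$, after which rigidity is immediate from the equivalence stated above. Because the system is right-angled, every off-diagonal label lies in $\{2, \infty\}$, so the labelled graph carries no more information than its underlying \emph{commuting graph}: an edge records $m=2$, i.e.\ $st=ts$, and a non-edge records $m=\infty$, i.e.\ $\langle s,t\rangle \cong D_\infty$. Thus it suffices to recover, in a basis-independent way, the vertex set together with the commuting relation, and the entire difficulty is to exhibit this combinatorial datum as an invariant of the abstract group $W$.

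First I would pin down the vertices, i.e.\ the rank. Abelianising the presentation (2.1), the relations $(ss')^{2}=e$ collapse, since in $W^{\mathrm{ab}}$ one has $stst = s^{2}t^{2}$, so that $W^{\mathrm{ab}} \cong \bigoplus_{s \in S} \mathbb{Z}/2\mathbb{Z}$ is the free $\mathbb{F}_2$-vector space on the image of $S$. Hence the rank $|S|$ equals $\dim_{\mathbb{F}_2} W^{\mathrm{ab}}$, a cardinal manifestly independent of the chosen basis; in particular $|S| = |S'|$. This is the routine part.

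To recover the edges I would reconstruct the whole \emph{nerve}, namely the simplicial complex whose simplices are the finite cliques of $\Gamma$, equivalently the finite \emph{spherical} standard parabolic subgroups $W_T \cong (\mathbb{Z}/2\mathbb{Z})^{|T|}$ for $T \subseteq S$ a clique. Since a simple graph is determined by its family of maximal cliques, and since for standard parabolics one has $W_A \cap W_B = W_{A\cap B}$, a graph isomorphism will fall out of an isomorphism between the two posets of finite clique-subgroups, matching vertices $\langle s \rangle$, edges $\langle s,t\rangle$, and their intersections. Concretely I would (i) characterise group-theoretically the finite subgroups of $W$ that arise as clique-parabolics — up to conjugacy every finite subgroup embeds in one of these, the maximal ones being exactly the conjugates of $W_T$ for $T$ a maximal clique, of order $2^{|T|}$ — and (ii) detect commutation of two generators from the order of their product, $st$ being an involution (equivalently $\langle s,t\rangle$ finite) exactly for an edge and of infinite order otherwise.

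The crux, and the place where right-angledness is indispensable, is step (i): singling out the rank-one parabolics $\langle s \rangle$, i.e.\ the \emph{generators}, among all involutions of $W$ in a basis-free manner, while controlling the ambiguity coming from the fact that a finite clique-subgroup $(\mathbb{Z}/2\mathbb{Z})^{k}$ admits many $\mathbb{F}_2$-bases, only some of which are cliques. This is precisely the phenomenon that makes general Coxeter groups non-rigid, and I expect its resolution to rest on \emph{reflection rigidity} — the fact that the set of conjugates of generators, and the associated notion of parabolic closure, do not depend on $S$ — together with the structural description of finite subgroups and of parabolic intersections, for which I would invoke the analysis of \cite{castella}. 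Once the generators and their commuting relation have been matched across $S$ and $S'$, the resulting labelled-graph isomorphism $\Gamma_{(W,S)} \cong \Gamma_{(W,S')}$ produces, via the universal property of (2.1), an automorphism $\alpha \in \mathrm{Aut}(W)$ with $\alpha(S)=S'$, which completes the argument.
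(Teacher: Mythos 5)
The paper does not actually prove this statement: it is imported verbatim from Castella's article, so the only ``proof'' in the source is the citation. Your proposal must therefore be judged on its own as an argument, and as such it has a genuine gap at exactly the point you identify as the crux. The reduction to a labelled-graph isomorphism and the computation $W^{\mathrm{ab}} \cong \bigoplus_{s\in S}\mathbb{Z}/2\mathbb{Z}$ (hence basis-independence of the rank) are fine, and so is the observation that the finite subgroups of $W$ are controlled by the clique-parabolics. But your proposed resolution of step (i) rests on ``reflection rigidity --- the fact that the set of conjugates of generators \ldots do not depend on $S$,'' and this is \emph{false} for right-angled Coxeter groups. Already for $W=\langle s,t\mid s^2=t^2=(st)^2=e\rangle\cong(\mathbb{Z}/2\mathbb{Z})^2$, both $\{s,t\}$ and $\{s,st\}$ are right-angled Coxeter bases, and their reflection sets $\{s,t\}$ and $\{s,st\}$ differ. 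More generally, whenever the star property of the paper fails (i.e.\ $st(t)\subseteq st(s)$ for some $s\neq t$), the transvection $t\mapsto st$ extends to an automorphism carrying $S$ to a Coxeter basis with a different set of reflections; the paper's Theorem 2.7(a) records precisely that such non-graph, non-reflection-preserving elements of $Aut(W,F(\Gamma))$ exist in that case. Rigidity nevertheless holds for these groups, so any correct proof must handle exactly the situation your plan excludes; a mechanism for matching generators across two bases whose reflection sets disagree is the entire content of the theorem, and your sketch supplies none.

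The fallback you offer --- ``invoke the analysis of \cite{castella}'' for the structural facts needed in step (i) --- is circular, since the statement under review \emph{is} Castella's theorem. Relatedly, the claim that an isomorphism between the posets of finite clique-subgroups yields a graph isomorphism is not yet an argument: a maximal finite parabolic is an elementary abelian $2$-group $(\mathbb{Z}/2\mathbb{Z})^{k}$ whose automorphism group acts transitively on ordered bases, so the poset of its subgroups cannot by itself single out which involutions are vertices of $\Gamma$; and the identity $W_A\cap W_B=W_{A\cap B}$ that you use holds for standard parabolics with respect to a \emph{single} basis, whereas the comparison you need is between parabolics taken with respect to two different bases. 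So the outline is a reasonable map of where the difficulty lives, but the difficulty itself is not overcome.
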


	Thus, in the case of right-angled Coxeter systems $(W, S)$ the group $W$ alone determines the isomorphism type of $\Gamma_{(W, S)}$. Consequently, given a right-angled Coxeter group $W$ we denote by $\Gamma_{W}$ (or simply $\Gamma$) its associated Coxeter graph (unique modulo graph isomorphisms). 
	Given a Coxeter group $W$ there is a special class of subgroups of $W$, which are called the {\em parabolic} subgroups of $W$. These subgroups (and the subgroup relation which they induce) will be the main ingredient in our model-theoretic analysis of right-angled Coxeter groups.
	
		\begin{definition}\label{def_parabolic} Let $W$ be a Coxeter group.
\begin{enumerate}[(1)]
\item Given a Coxeter basis $S$ for $W$, we say that $W'$ is an $S$-{\em parabolic} subgroup of $W$ if $W' = \langle S' \rangle_W$ for some $S' \subseteq S$, i.e. $W'$ is generated by a subset of $S$. In this case, we denote the subgroup $W'$ as $W_{S'}$.
\item We say that $W'$ is a {\em parabolic} subgroup of $W$, denoted as $W' \preccurlyeq W$, if $W'$ is an $S$-parabolic subgroup of $W$ for {\em some} Coxeter basis $S$ of $W$.
\end{enumerate}
\end{definition}	
	
	A parabolic subgroup $W' = \langle S' \rangle_W$ of a Coxeter group $W = (W, S)$ is Coxeter group in its own right, with as Coxeter generating set the induced subgraph determined by $S'$ (see e.g. \cite[Proposition 2.4.1]{bjorner}). As evident from the definition, the parabolic subgroup relation depends on the particular choice of Coxeter basis $S$ for $W$. This generates some difficulties in the analysis of this relation, e.g. in the proof of very basic properties such as transitivity. To this end, the notion of strong-rigidity (cfr. Definition \ref{def_rigidity}) is of great help (notice for example that in the presence of strong-rigidity the transitivity of the parabolic subgroup relation is essentially trivial, see the proof of Theorem \ref{almost_AEC_th}). For this reason we are interested in sufficient (and possibly necessary) conditions for strong rigidity.
	The problem of (strong) rigidity of a Coxeter group $W$ is of course strictly related to our understanding of the corresponding group of automorphisms $Aut(W)$. In the case of right-angled Coxeter groups a fundamental result of Tits \cite{tits} gives an explicit description of $Aut(W)$ as a semidirect product of ``tame'' subgroups of $Aut(W)$. We describe these two subgroups.
	Given a right-angled Coxeter group $W$ with Coxeter graph $\Gamma = (S, E)$, let $F(\Gamma)$ be the collection of the $S$-spheric subgroups of $W$, i.e. the $S$-parabolic subgroups $W_{S'}$ of $W$, with $S'$ a finite clique of $\Gamma_{(W, S)}$ (i.e. $m_{(W, S)}(s, s') \in \{ 1, 2 \}$). Let then $Aut(W, F(\Gamma))$ be the subgroup of $Aut(W)$ which stabilizes $F(\Gamma)$, and $Spe(W)$ the subgroup of $Aut(W)$ which stabilizes the conjugacy class of every $s \in S$.
		
	\begin{theorem}[Tits \cite{tits}] Let $W$ be a right-angled Coxeter group. Then $$Aut(W) = Spe(W) \rtimes Aut(W, F(\Gamma)).$$
\end{theorem}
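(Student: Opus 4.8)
The plan is to realise the claimed decomposition as the splitting of a short exact sequence
\[
1 \longrightarrow Spe(W) \longrightarrow Aut(W) \stackrel{\rho}{\longrightarrow} Aut(\Gamma) \longrightarrow 1,
\]
where $\rho$ records the action of an automorphism on the conjugacy classes of generators and $Aut(\Gamma)$ denotes the group of label-preserving graph automorphisms. The argument rests on one structural input, which I would isolate first: the set $R$ of reflections (the conjugates of the elements of $S$) is independent of the chosen Coxeter basis, hence $Aut(W)$-invariant; moreover, in the right-angled case two distinct generators are joined by no path of odd finite label (the only odd finite value being $1$, on the diagonal), so distinct generators are never conjugate and each conjugacy class of reflections meets $S$ in exactly one point. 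This is where Castella's rigidity enters: for any $\phi \in Aut(W)$ the image $\phi(S)$ is again a Coxeter basis, so invoking rigidity together with the basis-independence of $R$ one gets $\phi(S)\subseteq R$; thus $\phi$ permutes $R$ and induces a well-defined permutation $\rho(\phi)$ of the set $\mathcal{C}$ of conjugacy classes, which we identify with $S$ via the previous sentence.

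First I would analyse the complement. An element $\theta \in Aut(W,F(\Gamma))$ stabilises $F(\Gamma)$, hence permutes its rank-one members $\{\,\langle s\rangle : s \in S\,\}$ (the order-two spheric subgroups); since each such subgroup has a unique non-trivial element, $\theta$ determines a permutation $\sigma_\theta$ of $S$ with $\theta(s)=\sigma_\theta(s)$. Because $\{s,t\}$ spans a spheric subgroup iff $m(s,t)=2$, preservation of $F(\Gamma)$ forces $\sigma_\theta$ to preserve adjacency and its labels, i.e. $\sigma_\theta \in Aut(\Gamma)$; conversely every graph automorphism extends, by permuting generators, to an element of $Aut(W,F(\Gamma))$, and $\theta$ is determined by $\sigma_\theta$ on $S$. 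Hence $\theta \mapsto \sigma_\theta$ is an isomorphism $Aut(W,F(\Gamma)) \cong Aut(\Gamma)$, and unwinding the definitions one checks $\rho(\theta)=\sigma_\theta$, so $\rho$ restricts to an isomorphism on $Aut(W,F(\Gamma))$. This simultaneously provides a section of $\rho$ and the triviality of $Spe(W)\cap Aut(W,F(\Gamma))$.

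Next I would identify $Spe(W)$ with $\ker\rho$ and check normality and surjectivity. By construction $\phi \in Spe(W)$ iff $\phi$ fixes every class in $\mathcal{C}$ iff $\rho(\phi)=\mathrm{id}$, so $Spe(W)=\ker\rho$ is normal; equivalently one verifies directly that if $\psi(s)\sim s$ for all $s$ and $\alpha\in Aut(W)$, then writing $\alpha^{-1}(s)=w t w^{-1}$, which is possible precisely because $\alpha^{-1}(s)\in R$, gives $(\alpha\psi\alpha^{-1})(s)\sim s$. For the decomposition of an arbitrary $\phi$, let $\theta$ be the element of $Aut(W,F(\Gamma))$ corresponding to the graph automorphism $\rho(\phi)$; then $\rho(\phi\theta^{-1})=\mathrm{id}$, so $\psi:=\phi\theta^{-1}\in Spe(W)$ and $\phi=\psi\theta$. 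Together with the trivial intersection this yields $Aut(W)=Spe(W)\rtimes Aut(W,F(\Gamma))$.

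The main obstacle I anticipate is the claim that $\rho$ actually lands in $Aut(\Gamma)$ rather than in the full symmetric group $\mathrm{Sym}(S)$: one must show that the permutation of classes induced by an \emph{arbitrary} $\phi$ preserves the commuting/spheric relation among generators. This is the genuinely Coxeter-theoretic heart of the statement, and it is exactly where rigidity does the real work, guaranteeing that $\phi$ carries the labelled graph $\Gamma_{(W,S)}$ to an isomorphic copy so that the induced bijection on $\mathcal{C}\cong S$ respects edges and labels. Once this is in hand, the group-theoretic bookkeeping above is routine.
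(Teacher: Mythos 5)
Your argument rests on one ``structural input'' that is false: the set $R$ of reflections of $(W,S)$ is \emph{not} invariant under $Aut(W)$, and in particular $\phi(S)\subseteq R$ can fail. The smallest counterexample is $W=\langle v,v'\mid v^2=v'^2=(vv')^2=1\rangle\cong(\mathbb{Z}/2\mathbb{Z})^2$, where $v\mapsto vv'$, $v'\mapsto v'$ is an automorphism carrying the reflection $v$ to $vv'$, which is conjugate to no generator (in the abelianization $\bigoplus_{s\in S}\mathbb{Z}/2\mathbb{Z}$ every reflection maps to a basis vector, while $vv'$ maps to $e_v+e_{v'}$). More generally, whenever $\Gamma_W$ fails the star property there are adjacent $v,v'$ with $st(v)\subseteq st(v')$, and the transvection $\tau\colon v\mapsto vv'$, $u\mapsto u$ for $u\neq v$, is an automorphism of $W$ that does not preserve $R$. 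Since $\tau$ acts on the abelianization by a transvection, whereas every element of $Spe(W)$ acts trivially there and every graph automorphism acts by a permutation matrix, we get $\tau\notin Spe(W)\cdot Aut(\Gamma)$. So your sequence $1\to Spe(W)\to Aut(W)\to Aut(\Gamma)\to 1$ is not exact: $\rho$ is not well defined, $Aut(W)/Spe(W)$ is in general strictly larger than $Aut(\Gamma)$, and even your normality argument for $Spe(W)$ invokes the false inclusion $\alpha^{-1}(s)\in R$.

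The same transvections show that your identification $Aut(W,F(\Gamma))\cong Aut(\Gamma)$ cannot be right: Theorem \ref{recap_th}(a) (after Castella) asserts that this equality holds \emph{if and only if} $\Gamma_W$ has the star property. Your derivation of it goes through only if $F(\Gamma)$ is read literally as the bare list of clique parabolics $W_{S'}$, in which case an automorphism stabilizing $F(\Gamma)$ must permute its order-two members and hence permute $S$; but with the family Tits and Castella actually work with (closed under passing to subgroups of spheric subgroups, so that e.g.\ $\langle vv'\rangle\in F(\Gamma)$ whenever $vEv'$) the transvection $\tau$ stabilizes $F(\Gamma)$ without permuting $S$. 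The real content of Tits' theorem is precisely that the complement of $Spe(W)$ is this larger group containing the transvections, and that $Spe(W)$ is normal and meets it trivially --- none of which follows from Castella's rigidity (which in any case postdates Tits' theorem and only supplies an automorphism matching two bases, not invariance of the reflection set). As it stands the proposal proves a statement that is false for every $\Gamma$ lacking the star property, so the problem is structural rather than a repairable detail.
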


	Evidently, 
$$Inn(W) \subseteq Spe(W) \text{ and } Aut(\Gamma) \subseteq Aut(W, F(\Gamma)),$$ 
where $Aut(\Gamma)$ denotes the automorphism group of the graph $\Gamma$, which is naturally thought as a subgroup of $Aut(W)$, since every automorphism of $\Gamma$ extends canonically to an automorphism of $Aut(W)$. The next proposition shows the connection between $Inn(W)$ and $Aut(\Gamma)$, and the strong rigidity of $W$.

\begin{proposition}\label{castella_remark} Let $W$ be a right-angled Coxeter group. Then
\begin{equation}W \text{ is strongly rigid } \Leftrightarrow Inn(W) = Spe(W) \text{ and } Aut(\Gamma) = Aut(W, F(\Gamma)).
\end{equation} 
\end{proposition}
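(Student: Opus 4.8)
The plan is to prove the biconditional in Proposition~\ref{castella_remark} by unpacking the definition of strong rigidity (Definition~\ref{def_rigidity}(2)) against Tits' decomposition $Aut(W) = Spe(W) \rtimes Aut(W, F(\Gamma))$. The key observation is that by Castella's theorem every right-angled Coxeter group is already rigid, so for any two Coxeter bases $S$ and $S'$ there is \emph{some} $\alpha \in Aut(W)$ with $\alpha(S) = S'$; strong rigidity asks that such an $\alpha$ can always be chosen in $Inn(W)$. Since the conjugacy class of each $s \in S$ determines which generators map where up to conjugation, I expect the natural reformulation to be: strong rigidity holds iff every automorphism carrying one Coxeter basis to another is inner, and this should be teased apart using the semidirect product into the two conditions $Inn(W) = Spe(W)$ and $Aut(\Gamma) = Aut(W, F(\Gamma))$.

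For the direction ($\Leftarrow$), assume both equalities hold. Given two Coxeter bases $S, S'$, rigidity provides $\alpha \in Aut(W)$ with $\alpha(S) = S'$. By the semidirect decomposition, write $\alpha = \sigma \cdot \tau$ with $\sigma \in Spe(W)$ and $\tau \in Aut(W, F(\Gamma))$. Using the hypotheses $Spe(W) = Inn(W)$ and $Aut(W,F(\Gamma)) = Aut(\Gamma)$, both factors lie in the ``expected'' subgroups: $\sigma$ is inner, and $\tau$ is (the canonical extension of) a graph automorphism. A graph automorphism permutes $S$ to itself as a set, so after composing with $\tau$ the relevant map is already carrying $S$ to $S'$ via an inner automorphism, yielding strong rigidity. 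The point here is that graph automorphisms fix the basis $S$ setwise, so they do not obstruct the inner realizability of the change of basis.

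For the direction ($\Rightarrow$), assume $W$ is strongly rigid. I would establish the two equalities separately, in each case proving the nontrivial inclusion ($Spe(W) \subseteq Inn(W)$ and $Aut(W, F(\Gamma)) \subseteq Aut(\Gamma)$), since the reverse inclusions are already recorded in the excerpt. For $Spe(W) \subseteq Inn(W)$: take $\sigma \in Spe(W)$; since $\sigma$ stabilizes the conjugacy class of every $s \in S$, the image $\sigma(S)$ is a Coxeter basis of the same graph type, so strong rigidity gives an inner $\iota$ with $\iota(\sigma(S)) = S$; one then checks that $\iota \sigma$ fixes $S$ pointwise (up to the constraints forced by $Spe$), hence is the identity, so $\sigma = \iota^{-1} \in Inn(W)$. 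For $Aut(W, F(\Gamma)) \subseteq Aut(\Gamma)$: take $\tau$ stabilizing $F(\Gamma)$; again $\tau(S)$ is a Coxeter basis, strong rigidity produces an inner $\iota$ with $\iota \tau (S) = S$, and one argues that $\iota \tau$ restricts to a graph automorphism of $\Gamma$, forcing $\tau$ into $Inn(W) \cdot Aut(\Gamma)$, which by the directness of the semidirect product and $Inn(W) \subseteq Spe(W)$ lands $\tau$ in $Aut(\Gamma)$.

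The main obstacle I anticipate is the careful bookkeeping in the ($\Rightarrow$) direction, specifically disentangling the $Spe$ and $Aut(W,F(\Gamma))$ components when an inner automorphism is composed with an element of one factor: one must verify that the inner correction does not spill the element out of its intended factor, which relies essentially on $Inn(W) \subseteq Spe(W)$ and on the uniqueness of the semidirect product decomposition. A secondary subtlety is the precise sense in which ``$\iota\sigma$ fixes $S$'' forces $\iota\sigma = \mathrm{id}$: this uses that an automorphism fixing a generating set setwise and respecting the conjugacy-class structure must be the identity, which should follow from the rigidity of the presentation, but deserves an explicit justification rather than being treated as obvious.
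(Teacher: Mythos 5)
Your argument is correct in substance, but note that the paper does not actually prove this proposition: it simply cites Castella \cite[Remark 5(b)]{castella}, so you are supplying the derivation the paper outsources. What you give is the natural argument from Tits' decomposition together with Castella's rigidity theorem, and the skeleton is sound; the value of writing it out is in the three facts you leave as ``one checks,'' which are exactly where the content sits. First, in the inclusion $Spe(W) \subseteq Inn(W)$: since $\iota\sigma \in Spe(W)$ permutes $S$ and sends each $s$ to a conjugate of itself, you need that \emph{distinct generators of a right-angled system are never conjugate} (all finite labels equal $2$, hence there is no odd-labelled path joining them; the paper uses this same fact, via \cite[pg.\ 5]{bourbaki}, in the proof of Lemma \ref{lemma_for_smooth}) --- this is what upgrades ``fixes $S$ setwise'' to ``fixes $S$ pointwise,'' and then triviality follows because $S$ generates $W$. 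Second, in the inclusion $Aut(W,F(\Gamma)) \subseteq Aut(\Gamma)$: an automorphism permuting $S$ preserves the orders of the products $ss'$, hence induces an automorphism of the labelled graph and coincides with its canonical extension, so $\iota\tau \in Aut(\Gamma)$; writing $\tau = \iota^{-1}(\iota\tau)$ with $\iota^{-1} \in Inn(W) \subseteq Spe(W)$ and comparing with the trivial decomposition of $\tau$, the uniqueness of factorization in the semidirect product (i.e.\ $Spe(W) \cap Aut(W,F(\Gamma)) = \{1\}$) forces $\iota = \mathrm{id}$ and $\tau \in Aut(\Gamma)$, exactly as you indicate. One miscalibration in your opening paragraph: the reformulation ``strong rigidity holds iff \emph{every} automorphism carrying one basis to another is inner'' is false whenever $\Gamma$ has nontrivial graph automorphisms (these carry $S$ to $S$ without being inner); fortunately your actual proof only uses the correct existential form, so nothing breaks.
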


\begin{proof} \cite[Remark 5(b)]{castella}.
\end{proof}

	We are then interested in criteria which ensure that the two containments in (2.2) are equalities. The next theorem recapitulates what is known on the subject. We first introduce some definitions which will be useful for the statement of the theorem.
	
	\begin{definition} Let $\Gamma = (V, E)$ be a graph. 
	\begin{enumerate}[(1)]
	\item For $v \in \Gamma$, we let $N(v) = \left\{ v' \in \Gamma : v E v' \right\}$ and $st(v) = N(v) \cup \left\{ v \right\}$.
	\item We say that $\Gamma$ is {\em star-connected} if for every $v \in \Gamma$ we have that $\Gamma - st(v)$ is connected. 
	\item We say that $\Gamma$ has the {\em star property} if for every $v \neq v' \in \Gamma$ we have that $st(v) \not\subseteq st(v')$.
\end{enumerate}
\end{definition}

	\begin{theorem}\label{recap_th} Let $W$ be a right-angled Coxeter group.
\begin{enumerate}[(a)]
\item $ Aut(W, F(\Gamma)) = Aut(\Gamma)$ if and only if $\Gamma_{W}$ has the star property (cfr. \cite{castella}, Proposition 7).
\item If $W$ is of finite rank, then $Spe(W) = Inn(W)$ if and only if $\Gamma_W$ is star-connected (cfr. \cite{muhlherr}, corollary to the main theorem).
\item If $W$ is of arbitrary rank, then the star-connectedness of $\Gamma_W$ is a necessary but not sufficient condition for $Spe(W) = Inn(W)$.
\end{enumerate}
\end{theorem}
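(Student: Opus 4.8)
The plan is to derive all three statements from the Tits decomposition $Aut(W) = Spe(W) \rtimes Aut(W, F(\Gamma))$ together with a hands-on analysis of two explicit families of automorphisms: the \emph{dominated transvections}, which measure the gap between $Aut(W, F(\Gamma))$ and $Aut(\Gamma)$, and the \emph{partial conjugations}, which measure the gap between $Spe(W)$ and $Inn(W)$. Throughout I would use that the generators of a right-angled $W$ are involutions, that $s$ and $s'$ commute exactly when they are adjacent in $\Gamma$, and hence that conjugation by a vertex $v$ fixes $st(v)$ pointwise.

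For (a) I would first record the transvection mechanism underlying Castella's Proposition 7. If the star property fails, say $st(v) \subseteq st(v')$ with $v \neq v'$, then $v \in st(v')$ forces $v$ and $v'$ to be adjacent, and the map $\phi$ fixing every generator except $v$ and sending $v \mapsto vv'$ is a well-defined involutive automorphism: the containment of stars is exactly what is needed for $\phi$ to respect all defining relations. One checks that $\phi$ stabilizes $F(\Gamma)$ but does not permute $S$, so $\phi \in Aut(W, F(\Gamma)) \setminus Aut(\Gamma)$. Conversely, under the star property no vertex is dominated, and every element of $Aut(W, F(\Gamma))$ is then forced to permute $S$; citing Castella for this last implication yields the stated equivalence.

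For (b), and simultaneously for the necessity half of (c), I would turn to partial conjugations. Given $v$ and a union $C$ of connected components of $\Gamma - st(v)$, define $\sigma_{v,C}$ by $s \mapsto vsv$ for $s \in C$ and the identity on all other generators; one verifies directly that $\sigma_{v,C}$ is an involutive automorphism lying in $Spe(W)$, since each generator is sent to a conjugate of itself. Because conjugation by $v$ fixes $st(v)$ pointwise, $\sigma_{v,C}$ coincides with the inner automorphism by $v$ when $C$ is all of $\Gamma - st(v)$ and is trivial when $C = \emptyset$; but if $\Gamma - st(v)$ is disconnected one can choose $C$ proper and nonempty, and then $\sigma_{v,C}$ is not inner. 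This shows that star-connectedness is necessary for $Spe(W) = Inn(W)$, and crucially the argument never uses finiteness of the rank --- exactly the necessity asserted in (c). The finite-rank sufficiency in (b) is then the content of M\"uhlherr's corollary: in finite rank $Spe(W)$ is generated modulo $Inn(W)$ by the partial conjugations, so star-connectedness (which renders every partial conjugation inner) collapses $Spe(W)$ onto $Inn(W)$.

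The hard part, and the substance of the non-sufficiency clause of (c), is that this generation statement breaks down in infinite rank: $Spe(W)$ can contain automorphisms that are not finite products of inner automorphisms and partial conjugations. To finish (c) I would construct an infinite-rank, star-connected Coxeter graph $\Gamma$ carrying an explicit $\alpha \in Spe(W) \setminus Inn(W)$, with $\alpha$ conjugating the generators by group elements that admit no common conjugator, the obstruction being spread across infinitely many features of $\Gamma$ rather than localized at a single disconnecting star. Exhibiting such a $\Gamma$ and verifying both that $\alpha$ is an automorphism and that it provably escapes $Inn(W)$ despite star-connectedness is the main obstacle, and marks precisely where the infinite-rank theory departs from the finite-rank picture of (b).
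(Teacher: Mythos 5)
The only part of this theorem that the paper itself proves is (c): parts (a) and (b) are quoted from Castella and M\"uhlherr respectively, and the necessity half of (c) is quoted from Tits, so your sketches there are at worst redundant. (One caveat on (a): with $F(\Gamma)$ defined as the collection of \emph{standard} spheric subgroups $W_{S'}$, your transvection $\phi\colon v \mapsto vv'$ sends $W_{\{v\}} = \{e, v\}$ to $\{e, vv'\}$, which is not of the form $W_{S'}$, so the claim that $\phi$ stabilizes $F(\Gamma)$ does not hold literally; what one can say directly is that $vv'$ is not conjugate to any generator, hence $\phi \notin Spe(W) \rtimes Aut(\Gamma)$, and the Tits decomposition then forces $Aut(\Gamma) \subsetneq Aut(W, F(\Gamma))$.) The genuine gap is the non-sufficiency clause of (c), which you correctly identify as the substance of the statement but then leave as a declaration of intent: no graph $\Gamma$ and no automorphism $\alpha$ are produced, and no argument is given that $\alpha \in Spe(W) \setminus Inn(W)$. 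As it stands the proposal establishes nothing beyond what is already cited.

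The missing construction is in fact short, and the mechanism is a coherent limit of inner automorphisms rather than the ``spread-out obstruction with no common conjugator'' you gesture at. Write $\Gamma = \bigcup_{i<\omega}\Gamma_i$ as an increasing union of finite graphs such that $\Gamma$ is star-connected and for each $i$ there are $a_i \neq b_i \in \Gamma_i - \Gamma_{i-1}$ with $a_i$ adjacent to every vertex of $\Gamma_{i-1}$ but not to $b_i$; the countable random graph admits such a filtration. Let $\alpha_i \in Spe(W_{\Gamma_i})$ be conjugation by $a_i \cdots a_0$, i.e.\ $\alpha_i(x) = a_0\cdots a_i\, x\, a_i\cdots a_0$ for $x \in \Gamma_i$. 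Since $a_j$ commutes with every vertex of $\Gamma_{j-1}$, the restriction of $\alpha_j$ to $W_{\Gamma_i}$ equals $\alpha_i$ whenever $i \leq j$, so $\alpha = \bigcup_{i<\omega} \alpha_i$ is a well-defined element of $Spe(W_\Gamma)$. It is not inner: any $g \in W_\Gamma$ has finite support, so for all large $i$ one has $a_i \notin sp(g b_i g^{-1}) \subseteq sp(g) \cup \{b_i\}$, whereas $a_i \in sp(\alpha(b_i))$ because $a_i$ does not commute with $b_i$ and hence the two occurrences of $a_i$ in $a_0\cdots a_i\, b_i\, a_i\cdots a_0$ cannot cancel. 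This is exactly the step your proposal needs and does not supply; note also that Tits' own remark on this point is flawed (the map he exhibits is not surjective), which is why the paper spells the example out rather than citing it.
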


	\begin{proof} (c) For the necessity of the condition see \cite[Proposition 5]{tits}. The non-sufficiency of the condition is claimed in \cite{tits}, in the final remark of Section 3, but the exhibited map is not surjective. We thus show the non-sufficiency of the condition. Let $\Gamma =  \bigcup_{i < \omega} \Gamma_i$ be a countably infinite star-connected graph such that for each $i < \omega$ we have that $\Gamma_i$ is finite and there exists $a_i \neq b_i \in \Gamma_i - \Gamma_{i-1}$ such that $a_i$ is not adjacent to $b_i$ and $a_i$ is adjacent to every element in $\Gamma_{i-1}$. Such a $\Gamma = \bigcup_{i < \omega} \Gamma_i$ can easily be found, take e.g. the countably infinite random graph. For every $i < \omega$, let $\alpha_i \in Spe(W_{\Gamma_i})$ be such for every $x \in \Gamma_i$ we have $\alpha_i(x) = a_0 \cdots a_i x a_i \cdots a_0$. Then for every $i \leq j < \omega$ we have that $\alpha_j$ restricted to $W_{\Gamma_i}$ equals to $\alpha_i$, and so $\alpha = \bigcup_{i < \omega}\alpha_i \in Spe(W_{\Gamma})$. But obviously $\alpha \not\in Inn(W_{\Gamma})$.% because for every $i < \omega$ we have $|sp(\alpha(b_i))| = i+2$.
\end{proof}

	Point (c) above was already observed in \cite{tits}, and also noticed in \cite{castella}, where it is also shown that the star property is equivalent to one of the two conditions used in \cite{brady} to characterize strong rigidity in the finite rank case. In the case of right-angled Coxeter groups of arbitrary rank a necessary and sufficient condition on $\Gamma_W$ ensuring $Spe(W) = Inn(W)$ is not known. In the next two theorems, relying on technology from \cite{servatius} and \cite{tits}, we establish two sufficient conditions for $Spe(W) = Inn(W)$. 
	We first need to develop some combinatorics of right-angled Coxeter groups. Let $(W, S)$ be a Coxeter system. Each element $w \in W$ can be written as
a product of generators:
$$w = s_1 s_2 \cdots s_k,$$
with $s_i \in S$. (The identity element $e$ is represented by the empty word.) If $k$ is minimal among all such expressions for $w$, then $k$ is called the length of $w$ (written as  $|w| = k$) and the word $s_1 s_2 \cdots s_k$ is called a {\em normal form}
(or reduced word) for $w$. We denote by $sp(w)$ the set of letters appearing in {\em any} normal form for $w$, and call it the {\em support} of $W$ with respect to the Coxeter basis $S$. This is well-defined, since if $s_1 s_2 \cdots s_k$ and $s'_1 s'_2 \cdots s'_k$ are two normal forms for $w$, then the set of letters appearing in the word $s_1 s_2 \cdots s_k$ equals to the set of letters appearing in $s'_1 s'_2 \cdots s'_k$ (cfr. e.g. \cite[Corollary 1.4.8]{bjorner}). We now describe two ``moves'' which take a word $s_1 s_2 \cdots s_k$ in $(W, S)$ and change it into another word in $(W, S)$ that represents the same elements of $W$ and which is at most as long:
\begin{enumerate}[$(M_1)$]
\item if $s_i = s_{i+1}$ cancel the letters $s_i$ and $s_{i+1}$;
\item if $m(s_i, s_{i+1}) = 2$ exchange $s_i$ and $s_{i+1}$.
\end{enumerate}
	
	\begin{theorem}[Tits \cite{tits_words}]\label{normal_form} Let $(W, S)$ be a right-angled Coxeter system. If $s_1 s_2 \cdots s_n$ and  $s'_1 s'_2 \cdots s'_m$ are two words representing the same element $w \in W$, then $s_1 s_2 \cdots s_n$ and $s'_1 s'_2 \cdots s'_m$ can be reduced to identical normal forms using moves $(M_1)$ and $(M_2)$.
\end{theorem}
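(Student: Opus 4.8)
The plan is to exploit the different effects of the two moves: $(M_2)$ preserves length and is reversible, while $(M_1)$ strictly shortens a word. Accordingly I would argue in two phases. First, I would show that using $(M_1)$ and $(M_2)$ every word can be transformed into a reduced word representing the same element. Second, I would show that any two reduced words representing the same $w \in W$ are related by applications of $(M_2)$ \emph{alone}. Granting both claims the theorem is immediate: given words $u$ and $v$ representing $w$, use the first phase to reduce $u$ to a normal form $\bar{u}$ and $v$ to a normal form $\bar{v}$; by the second phase $\bar{u}$ and $\bar{v}$ are connected by a chain of $(M_2)$-moves, so applying that chain to $\bar{u}$ carries it to $\bar{v}$, and now both $u$ and $v$ have been reduced, via $(M_1)$ and $(M_2)$, to the single normal form $\bar{v}$.

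Both phases rest on one combinatorial fact, the right-angled form of the Exchange Condition: if $w = s_1 \cdots s_k$ is reduced and $s \in S$ satisfies $|sw| < |w|$, then there is an index $i$ with $s_i = s$ and $m(s, s_j) = 2$ for every $j < i$; equivalently, the letter $s_i$ can be carried to the front of the word by $(M_2)$-moves and then cancelled against an initial $s$ by $(M_1)$, giving $sw = s_1 \cdots s_{i-1} s_{i+1} \cdots s_k$. The contrapositive description is that a word fails to be reduced exactly when it contains two occurrences of a generator $s$ such that every letter strictly between them commutes with $s$; in that case $(M_2)$ brings the two copies together and $(M_1)$ removes them, which establishes the first phase by induction on length. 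I expect the proof of this Exchange Condition to be the main obstacle, as it is the only place where the group relations, rather than purely formal word manipulation, are used. I would prove it through the canonical geometric representation of $W$ on the real vector space with basis $\{\alpha_s : s \in S\}$ carrying the symmetric bilinear form with $B(\alpha_s, \alpha_s) = 1$, $B(\alpha_s, \alpha_{s'}) = 0$ when $m(s,s') = 2$, and $B(\alpha_s, \alpha_{s'}) = -1$ when $m(s,s') = \infty$, each generator acting as the reflection $v \mapsto v - 2B(v, \alpha_s)\alpha_s$. This representation is faithful, $|w|$ equals the number of positive roots sent to negative roots by $w$, and $|sw| < |w|$ holds precisely when $w^{-1}(\alpha_s)$ is a negative root. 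Writing $w^{-1} = s_k \cdots s_1$ and tracking the sign of the roots $s_j \cdots s_1(\alpha_s)$ as $j$ increases, the sign first changes at some step where a simple reflection sends a positive root to a negative one; since a simple reflection sends to a negative root only its own simple root, this forces $s_1 \cdots s_{j-1}(\alpha_{s_j}) = \alpha_s$, and a short analysis of the coefficient supports of roots, special to the right-angled values of $B$, then yields $s_j = s$ together with the required commutations $m(s, s_l) = 2$ for $l < j$.

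With the Exchange Condition in hand, the second phase is a clean induction on $k = |w|$. Let $u = s_1 \cdots s_k$ and $v = t_1 \cdots t_k$ be reduced words for $w$. Since $|s_1 w| < |w|$, the Exchange Condition applied to $v$ produces an index $i$ with $t_i = s_1$ commuting with $t_1, \ldots, t_{i-1}$, so $(M_2)$-moves rewrite $v$ as $s_1\, t_1 \cdots t_{i-1} t_{i+1} \cdots t_k$. Now $s_2 \cdots s_k$ and $t_1 \cdots t_{i-1} t_{i+1} \cdots t_k$ are reduced words of length $k - 1$ representing $s_1 w$, hence are $(M_2)$-connected by the induction hypothesis; prefixing $s_1$ and composing with the $(M_2)$-moves already applied to $v$ connects $u$ to $v$ using $(M_2)$ alone. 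This completes the induction and, with the first phase, proves the theorem.
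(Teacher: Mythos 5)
The paper does not actually prove this statement: it is imported wholesale from Tits \cite{tits_words} as a known black box, so your attempt is not "the same approach as the paper" --- it is a proof where the paper has only a citation. That said, your argument is essentially the classical one and its skeleton is sound. The two-phase decomposition (every word $(M_1)$-$(M_2)$-reduces to some reduced word; any two reduced words for the same element are linked by $(M_2)$ alone) is exactly how Tits-type word-problem theorems are organized, and your Phase~2 induction is clean: both $s_2\cdots s_k$ and $t_1\cdots t_{i-1}t_{i+1}\cdots t_k$ do have length $k-1=\ell(s_1w)$, so the inductive hypothesis applies, and reversibility of $(M_2)$ closes the loop. Everything is correctly funneled into the strengthened right-angled Exchange Condition ($s_i=s$ \emph{and} $m(s,s_j)=2$ for $j<i$, not merely the usual deletion of a letter), whose contrapositive is precisely Proposition~\ref{bark_prop_2} of the paper --- so your proof reproves that proposition en route, which is a pleasant bonus. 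The one soft spot is the final clause of your root-system argument: "a short analysis of the coefficient supports of roots" is doing real work and should be spelled out. Concretely, from $s_1\cdots s_{j-1}(\alpha_{s_j})=\alpha_s$ one should track that each application of $s_l$ alters only the $\alpha_{s_l}$-coordinate and, in the right-angled case ($B$-values $0$ or $-1$ off the diagonal), can only increase coordinates of a positive root unless $s_l$ coincides with a letter already in the support; comparing the support $\{s\}$ of the target with the coordinate of $\alpha_{s_j}$, which stays equal to $1$ until some $s_l=s_j$ acts, forces $s_j=s$ and forces every $s_l$ with $l<j$ to commute with $s$ (an occurrence $s_l=s$ with $l<j$ would contradict reducedness of $s_1\cdots s_k$). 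Filling in that computation, your proof is complete; an alternative, purely syntactic route special to the right-angled case goes through the normal form theory of graph products (as in \cite{bark}) and avoids the geometric representation altogether.
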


	\begin{proposition}\label{bark_prop_2} Let $s_1 \cdots s_n$ be a word in the right-angled Coxeter system $(W,S)$. Then $s_1 \cdots s_n$ is a normal form if and only if for every $1 \leq i < j \leq k$ with $s_i = s_j$, there exists $i < l < j$ such that $s_l \not\in st(s_i)$.
\end{proposition}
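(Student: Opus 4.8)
The plan is to prove both directions using Tits' normal-form theorem (Theorem~\ref{normal_form}), which tells us that two words represent the same element if and only if they are connected by a sequence of moves $(M_1)$ and $(M_2)$, and that $s_1 \cdots s_n$ fails to be reduced exactly when some sequence of such moves produces a cancellation via $(M_1)$.

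For the \emph{contrapositive of the ``if'' direction}, suppose $s_1 \cdots s_n$ is \emph{not} a normal form. Then by Theorem~\ref{normal_form} one can apply a finite sequence of moves $(M_1)$ and $(M_2)$ reducing its length; since $(M_1)$ is the only length-decreasing move, at some stage two \emph{equal} adjacent letters must appear and be cancelled. I would track which original positions these two letters occupy. Because only swaps $(M_2)$ between commuting generators (those with $m(s,s')=2$) have been applied before the first cancellation, the two cancelled letters come from some original positions $i < j$ with $s_i = s_j$, and every letter strictly between them must have commuted past one of the two in order for them to become adjacent. That means each such intermediate letter $s_l$ satisfies $m(s_l, s_i) = 2$, i.e. $s_l \in N(s_i) \subseteq st(s_i)$; combined with $s_l \neq s_i$ (else we would have chosen a closer pair) this gives $s_l \in st(s_i)$ for all $i < l < j$. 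This contradicts the right-hand combinatorial condition, so the condition implies the word is reduced.

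For the \emph{``only if'' direction}, I would argue again by contraposition: assume the combinatorial condition fails, so there exist $i < j$ with $s_i = s_j$ and $s_l \in st(s_i)$ for every $i < l < j$. Each such $s_l$ either equals $s_i$ or commutes with $s_i$; by choosing the pair $(i,j)$ with $j - i$ \emph{minimal} among all witnessing pairs, one ensures no intermediate letter equals $s_i$, so every $s_l$ with $i < l < j$ genuinely commutes with $s_i$ (i.e.\ $m(s_l, s_i) = 2$). Then repeated application of $(M_2)$ slides $s_i$ rightward past $s_{i+1}, \dots, s_{j-1}$ until it is adjacent to $s_j$, at which point $(M_1)$ cancels the pair $s_i s_j$, strictly shortening the word. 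Hence $s_1 \cdots s_n$ is not reduced, establishing the contrapositive.

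The main obstacle I anticipate is the bookkeeping in the ``if'' direction: after an arbitrary sequence of moves $(M_1)$ and $(M_2)$, relating the positions of the first cancelled pair back to a pair $i < j$ of \emph{original} indices with the required property is where care is needed. The clean way around this is to pass to the first application of $(M_1)$ in the reducing sequence, so that only swaps have occurred beforehand; a swap never changes the multiset of letters and only transposes adjacent commuting generators, so one can trace each letter's trajectory and conclude that everything originally lying between the two cancelled occurrences must commute with them. Invoking Theorem~\ref{normal_form} to guarantee that a non-reduced word admits \emph{some} such reducing sequence is exactly what makes this localization legitimate, and it is the crux on which the whole argument rests.
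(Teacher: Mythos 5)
Your argument is correct, but note that the paper does not actually prove this proposition at all: its ``proof'' is a pointer to \cite[Lemma 21]{bark}, so what you have written is a self-contained replacement for that citation, and it is essentially the standard argument that the cited source runs. Both directions check out: for ``condition implies reduced'' you correctly localize to the \emph{first} application of $(M_1)$, after which the word is merely a permutation of the original letters obtained by adjacent commuting transpositions, two occurrences of the same generator never swap with each other (since $m(s,s)=1$), and every letter originally between the cancelled pair must have swapped past one of the two equal letters, hence lies in $N(s_i)\subseteq st(s_i)$; for the converse the minimal-gap choice cleanly rules out intermediate occurrences of $s_i$ and lets you slide and cancel. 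One small wrinkle: your parenthetical ``combined with $s_l\neq s_i$ (else we would have chosen a closer pair)'' is the wrong justification in the ``if'' direction, where no pair is being \emph{chosen} --- the pair is handed to you by the first $(M_1)$ move. The correct observation is either that an intermediate occurrence of $s_i$ could never be moved out of the interval (it cannot $(M_2)$-swap past $s_i$ or $s_j$), or, more simply, that if $s_l=s_i$ then $s_l\in st(s_i)$ trivially, so the contradiction with the combinatorial hypothesis goes through regardless. With that justification repaired the proof is complete.
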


	\begin{proof} See e.g. \cite[Lemma 21]{bark}.	
\end{proof}

	\begin{proposition}\label{bark_prop} Let $s_1 \cdots s_n$ be a word in the right-angled Coxeter system $(W,S)$, and suppose that $s_i$ and $s_j$ can be brought next to each other using $(M_2)$ moves in order to use the move $(M_1)$ to shorten the word $s_1 \cdots s_n$. Then $s_i$ and $s_j$ can be brought together using only moves each of which involves either $s_i$ or $s_j$.
\end{proposition}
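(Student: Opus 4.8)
The plan is to reduce the statement to a single combinatorial observation about the moves themselves: under the hypothesis, \emph{every} letter strictly between $s_i$ and $s_j$ must commute with them. Once this is established, the conclusion is immediate. First I would fix notation. Since the two occurrences are to be cancelled by an $(M_1)$ move once they are adjacent, we must have $s_i = s_j =: s$, with $i < j$. Write $B = \{s_{i+1}, \dots, s_{j-1}\}$ for the block of letters strictly between them. The hypothesis provides a sequence of $(M_2)$ moves (no $(M_1)$ is used before the final cancellation) transforming $s_1 \cdots s_n$ into a word in which the occurrence originally at position $i$ and the one originally at position $j$ sit next to each other. I would track the individual occurrences as \emph{tokens} with fixed identities, so that an $(M_2)$ move is simply a transposition of two adjacent tokens whose letters $x,y$ satisfy $m(x,y) = 2$.

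The key step is the claim that each token of $B$ commutes with $s$. I would argue using the invariant that the relative order of any two fixed tokens can change only when those two tokens are \emph{directly} transposed by an $(M_2)$ move. The token at position $i$ and the token at position $j$ carry the same letter $s$, so $m(s,s) = 1$ and they can never be exchanged by $(M_2)$; hence in the final word the $i$-token still lies to the left of the $j$-token, and any token of $B$, which starts strictly between them, must, in order to leave that region, be directly transposed with the $i$-token or with the $j$-token at least once. Such a transposition is an $(M_2)$ move, so it forces $m(b, s) = 2$ for the corresponding letter $b$; in particular $b \neq s$ and $b \in N(s) \subseteq st(s)$. Thus every $s_l$ with $i < l < j$ commutes with $s$.

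Granting the claim, I would conclude directly: slide the $i$-token rightward by successively applying $(M_2)$ to exchange it with $s_{i+1}, s_{i+2}, \dots, s_{j-1}$ in turn — each exchange is legal because each such letter commutes with $s$ — until the $i$-token sits immediately to the left of the $j$-token, and then apply $(M_1)$. Every move used in this process transposes the $i$-token with an adjacent letter, hence involves $s_i$, which is precisely what is required.

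The only delicate point is the bookkeeping in the claim. One must be careful that equal letters cannot be exchanged by $(M_2)$, so that the two distinguished tokens can never cross one another and, more importantly, no intermediate occurrence of $s$ could ever escape from between them; were such an occurrence present it would permanently block adjacency, contradicting the hypothesis that $s_i$ and $s_j$ \emph{can} be brought together by $(M_2)$ moves alone. This is exactly where the hypothesis is consumed, and it is what guarantees that the letters of $B$ are genuinely distinct from $s$ and hence lie in $N(s)$.
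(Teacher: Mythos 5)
Your argument is correct and complete: the token-tracking observation that two occurrences of the same letter can never be transposed by an $(M_2)$ move, so that every letter initially between the $i$-th and $j$-th occurrences must at some point be directly exchanged with one of them (hence commutes with $s_i=s_j$), does all the work, and the explicit rightward slide of the $i$-token then yields the desired sequence of moves. The paper itself gives no argument here --- it simply cites Lemma~18 of Barkauskas, \emph{Centralizers in Graph Products of Groups} --- so there is nothing in-paper to compare against; your proof is the standard one for such statements and has the advantage of being self-contained, and in fact it proves the stronger conclusion that all moves can be taken to involve $s_i$ alone.
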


	\begin{proof} See e.g. \cite[Lemma 18]{bark} (where it is proved more).	
\end{proof}

	We now prove some facts about reflections (see definition below) in right-angled Coxeter groups. In this section we will only use Corollary \ref{reflections_prop}, but the rest will be crucial in what follows. Specifically, Lemma \ref{subword_lemma} will be the main ingredient in the proof of Theorem \ref{almost_AEC_th}. %We include all these facts here for coherence of exposition.

	\begin{definition} Let $(W, S)$ be a Coxeter system. We define the set of {\em reflections} of $(W, S)$ to be the set $R(W, S) = \{ wsw^{-1} : s \in S, w \in W \}$.
\end{definition}

	\begin{lemma}\label{lemma_for_subword_lemma} Let $(W, S)$ be a right-angled Coxeter system, $wsw^{-1} \in R(W,S)$ and $a_1 \cdots a_k$ a normal form for $w$. If $a_1 \cdots a_k s a_k \cdots a_1$ is not a normal form for $wsw^{-1}$, then there exists $1 \leq i \leq k$ such that:
\begin{enumerate}[(a)]
	\item $wsw = a_1 \cdots a_{i-1} a_{i+1} \cdots a_k s a_k \cdots a_{i+1}a_{i-1} \cdots a_1$;
	\item $a_i$ commmutes with $a_j$ for every $i < j \leq k$;
	\item $a_i$ commmutes with $s$;
	\item $a_1 \cdots a_{i-1} a_{i+1} \cdots a_k$ is a normal form.
\end{enumerate}
\end{lemma}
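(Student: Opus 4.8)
The plan is to work throughout with the palindromic word
\[
W_0 = a_1 \cdots a_k\, s\, a_k \cdots a_1,
\]
which represents $wsw^{-1}$ because $w^{-1}=a_k\cdots a_1$ (each generator is an involution). Writing $W_0=b_1\cdots b_{2k+1}$ and setting $\sigma(j)=2k+2-j$, the word is a palindrome: $b_{\sigma(j)}=b_j$ for all $j$, with the central letter $b_{k+1}=s$ fixed by $\sigma$. Since by hypothesis $W_0$ is not a normal form, Proposition \ref{bark_prop_2} yields a \emph{bad pair}, i.e. indices $p<q$ with $b_p=b_q$ and $b_l\in st(b_p)$ for all $p<l<q$. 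The observation driving everything is that $\sigma$ carries bad pairs to bad pairs (reversing order), so it suffices to produce a \emph{symmetric} bad pair, one with $q=\sigma(p)$ and hence $p\le k<q$. Indeed, for such a pair the intermediate letters are exactly $a_{i+1},\dots,a_k,s,a_k,\dots,a_{i+1}$ with $i:=p$, and the requirement that all of them lie in $st(a_i)$ reads off precisely as conditions (b) ($a_i$ commutes with $a_{i+1},\dots,a_k$) and (c) ($s\in st(a_i)$).

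The heart of the argument — and the step I expect to be the main obstacle — is to show a bad pair can always be taken symmetric. First I would record that both $a_1\cdots a_k$ (the given normal form for $w$) and its reverse $a_k\cdots a_1$ (a normal form for $w^{-1}$) are normal forms, so by Proposition \ref{bark_prop_2} no bad pair can have both endpoints among the first $k$ positions, nor both among the last $k$. For a \emph{mixed} bad pair $(p,q)$ with $p\le k<q$ that is not symmetric, put $p'=\sigma(q)\le k$; then $a_p=b_q=a_{p'}$ with $p\neq p'$ two distinct forward positions, and after possibly replacing $(p,q)$ by its $\sigma$-image we may assume $p<p'\,(<q)$. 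But then every $a_l$ with $p<l<p'$ lies strictly between $p$ and $q$, hence in $st(a_p)$, contradicting reducedness of $a_1\cdots a_k$ via Proposition \ref{bark_prop_2}. Finally, a bad pair using the central letter — say $p\le k$, $q=k+1$, which forces $a_p=s$ and $a_{p+1},\dots,a_k\in st(s)$ — produces a symmetric bad pair at $i=p$ outright, since the intermediate letters of that symmetric pair then all lie in $st(s)=st(a_p)$; the mirror case $p=k+1$ is identical under $\sigma$. Thus in every case a symmetric bad pair exists, giving an $i\le k$ with (b) and (c).

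It remains to deduce (a) and (d). For (a), conditions (b) and (c) say $a_i$ commutes with each of $a_{i+1},\dots,a_k$ and with $s$, so inside $W_0$ the forward occurrence of $a_i$ may be moved rightward past $a_{i+1}\cdots a_k\, s\, a_k\cdots a_{i+1}$ by $(M_2)$-moves until it meets the backward occurrence, whereupon $(M_1)$ cancels the two; this is exactly the type of cancellation licensed by Proposition \ref{bark_prop}, and it yields
\[
wsw^{-1}=a_1\cdots a_{i-1}a_{i+1}\cdots a_k\, s\, a_k\cdots a_{i+1}a_{i-1}\cdots a_1,
\]
which is (a). For (d), I would use (b) once more to rewrite $w=a_1\cdots a_k=a_1\cdots a_{i-1}a_{i+1}\cdots a_k\,a_i$ by $(M_2)$-moves; since such shuffles preserve both the element and its length, the result is again a normal form, and deleting the last letter of a normal form leaves a normal form. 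Hence $a_1\cdots a_{i-1}a_{i+1}\cdots a_k$ is a normal form, which is (d).
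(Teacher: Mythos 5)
Your proof is correct, but it reaches the index $i$ by a genuinely different route than the paper. The paper argues \emph{dynamically}: it invokes Theorem \ref{normal_form} to follow an actual reduction of $b_1\cdots b_{2k+1}$ to normal form, isolates an $(M_1)$-cancellation of a pair $(b_x,b_y)$, uses Proposition \ref{bark_prop} to assume the reduction only moves those two letters, and then splits into cases (both letters flanking the centre, or one of them equal to the central $s$) to extract (a)--(d). You argue \emph{statically}: ``not normal'' is converted via Proposition \ref{bark_prop_2} into the existence of a bad pair, and the palindrome symmetry $\sigma$ of $a_1\cdots a_k\,s\,a_k\cdots a_1$, together with the normality of $a_1\cdots a_k$ and of its reverse, forces a \emph{symmetric} bad pair, from which (b) and (c) are read off verbatim; (a) is then the algebraic identity $a_iXa_i=X$ for $X$ a product of letters commuting with $a_i$, and (d) is a shuffle-and-truncate. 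Your symmetrization step (ruling out non-symmetric mixed pairs by pulling $b_q$ back to position $\sigma(q)$ and contradicting reducedness of $a_1\cdots a_k$) is the real content and it is sound; the approach has the advantage of using only Proposition \ref{bark_prop_2}, bypassing Theorem \ref{normal_form} and Proposition \ref{bark_prop} entirely, at the cost of being further from the move-calculus the paper reuses in Lemma \ref{subword_lemma}. Two small points worth making explicit: in the central-letter case one may have $a_i=s$, where the literal ``push $a_i$ rightward by $(M_2)$-moves until it meets its mirror'' description fails (one cannot $(M_2)$-swap equal letters), but the group identity in (a) still follows from commutation alone; and for the $(M_2)$-shuffle in your proof of (d) you should note that $a_i\neq a_j$ for $i<j\leq k$ (otherwise the pair $(i,j)$ together with (b) would contradict normality of $a_1\cdots a_k$ via Proposition \ref{bark_prop_2}), so the swaps are genuine $(M_2)$-moves.
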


	\begin{proof} If $a_1 \cdots a_k s a_k \cdots a_1 = b_1 \cdots b_{2k+1}$ is not a normal form for $wsw^{-1}$, then because of Theorem \ref{normal_form} and the fact that $a_k \cdots a_1$ is normal, it must be the case that in any reduction of $a_1 \cdots a_k s a_k \cdots a_1$ to a normal form at some point we use the move $(M_1)$ for the pair $(b_x,b_y)$, where $x < y$ and either
	\begin{enumerate}[(i)]
	\item $b_x = b_i$ for $i \leq k$ and $b_y = b_{k+1} = s$, or
	\item $b_y = b_i$ for $k+2 \leq i \leq 2k+1$ and $b_x = b_{k+1} = s$, or
	\item $b_x = b_i$ for $i \leq k$ and $b_y = b_j$ for $k+2 \leq j \leq 2k+1$.
	\end{enumerate}
Furthermore, because of Proposition \ref{bark_prop}, we can assume that in this reduction we only use moves that involve either $b_x$ or $b_y$. Now, if we are in case (iii), then it is clear that $i$ is as wanted. In fact it must be the case that $j = (2k+1)-(i-1)$, otherwise $a_1 \cdots a_k$ is not normal, and so we satisfy condition (a) because of our assumption that we use only moves that involve either $b_x$ or $b_y$. Furthermore, conditions (b) and (c) are satisfied because of Proposition \ref{bark_prop_2}. Finally, it is easy to see that also (d) is satisfied, because otherwise $a_1 \cdots a_k$ is not normal. Case (i) and (ii) are symmetric, and so it suffices to analyse case (i). But this is essentially as in case (iii), since after deleting the pair $(b_x,b_y)$ we can move $b_{(2k+1)-(i-1)} = s$ where $b_y = b_{k+1} = s$ was, i.e. in the middle of the word.
\end{proof}

	\begin{lemma}\label{subword_lemma} Let $(W, S)$ be a right-angled Coxeter system, $T \subseteq S$ and $wsw^{-1} \in R(W, S) \cap W_T$. Let $a_1 \cdots a_k$ be a normal form for $w$, and $a_{q_{1}} \cdots a_{q_n}$ be the subword of $a_1 \cdots a_k$ obtained by deleting all the occurrences of letters in $S - T$. Then
	$$wsw^{-1} = a_{q_{1}} \cdots a_{q_n} s a_{q_n} \cdots a_{q_{1}}.$$
\end{lemma}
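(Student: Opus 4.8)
The plan is to prove the identity by induction on $k = |w|$, peeling one letter at a time by means of Lemma \ref{lemma_for_subword_lemma} and keeping track of the $T$-subword. Write $\sigma(w) = a_{q_1} \cdots a_{q_n}$ for the subword of $a_1 \cdots a_k$ obtained by keeping exactly the letters lying in $T$, so that the assertion to be proved is the equation $wsw^{-1} = \sigma(w)\, s\, \sigma(w)^{-1}$ in $W$ (note that the right-hand side is \emph{not} claimed to be a normal form). The one external fact I would invoke is the standard characterization of parabolic subgroups: since $W_T$ is itself a Coxeter group on the generating set $T$ (see \cite[Proposition 2.4.1]{bjorner}), Theorem \ref{normal_form} implies that an element $g \in W$ lies in $W_T$ if and only if $sp(g) \subseteq T$. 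For the base case $k = 0$ we have $w = e$, and both sides equal $s$, so there is nothing to prove.

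For the inductive step I would split into two cases according to whether the word $a_1 \cdots a_k s a_k \cdots a_1$ is already a normal form. If it is, then $sp(wsw^{-1}) = \{a_1, \ldots, a_k, s\}$, and since $wsw^{-1} \in W_T$ the support characterization forces every $a_i$ into $T$; hence $\sigma(w) = a_1 \cdots a_k = w$ and the desired identity is trivially $wsw^{-1} = wsw^{-1}$. If $a_1 \cdots a_k s a_k \cdots a_1$ is \emph{not} a normal form, then Lemma \ref{lemma_for_subword_lemma} supplies an index $1 \leq i \leq k$ satisfying conditions (a)--(d). Setting $w^- = a_1 \cdots a_{i-1} a_{i+1} \cdots a_k$, condition (a) reads $wsw^{-1} = w^- s (w^-)^{-1}$, and condition (d) guarantees that $w^-$ is a normal form of length $k-1$. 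In particular $w^- s (w^-)^{-1} = wsw^{-1} \in W_T$, so the induction hypothesis applies to $w^-$ and yields $w^- s (w^-)^{-1} = \sigma(w^-)\, s\, \sigma(w^-)^{-1}$.

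The crux of the argument — and the step I expect to be the main obstacle — is reconciling $\sigma(w)$ with $\sigma(w^-)$, since the letter $a_i$ peeled off by Lemma \ref{lemma_for_subword_lemma} need not belong to $S - T$. If $a_i \in S - T$, then deleting position $i$ leaves the $T$-subword untouched, so $\sigma(w^-) = \sigma(w)$ and we are done. The delicate case is $a_i \in T$, say $i = q_m$, for then $\sigma(w^-) = a_{q_1} \cdots \widehat{a_{q_m}} \cdots a_{q_n}$ differs from $\sigma(w)$ by the missing central letter. Here I would use conditions (b) and (c): since $a_{q_m}$ commutes with every $a_j$ with $j > i$ — in particular with $a_{q_{m+1}}, \ldots, a_{q_n}$ — and with $s$, I can slide the left copy of $a_{q_m}$ in $\sigma(w)\, s\, \sigma(w)^{-1}$ rightward past $a_{q_{m+1}} \cdots a_{q_n}$ and past $s$ until it meets its mirror copy, whereupon the two cancel. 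This shows
$$\sigma(w)\, s\, \sigma(w)^{-1} = \sigma(w^-)\, s\, \sigma(w^-)^{-1},$$
and combining with the induction hypothesis and (a) gives $\sigma(w)\, s\, \sigma(w)^{-1} = w^- s (w^-)^{-1} = wsw^{-1}$, completing the induction. The only computation left to write out in full is this commutation manoeuvre, which is routine once conditions (b) and (c) are in hand.
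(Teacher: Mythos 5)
Your proof is correct and takes essentially the same route as the paper's: both arguments rest entirely on iterating Lemma \ref{lemma_for_subword_lemma} and using its conditions (b) and (c) to see that removing (or reinserting) a deleted $T$-letter, which commutes with everything to its right and with $s$, does not change the value of the conjugate. The paper peels all the way down to a normal word $w_l s w_l^{-1}$ and then reinserts the deleted $T$-letters in a single step, whereas you package the identical cancellation as an induction on $|w|$; the mathematical content is the same.
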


	\begin{proof} Iterating Lemma \ref{lemma_for_subword_lemma}, we get $l \leq k$ and a sequence of words $(w_i)_{i \leq l}$ such that:
	\begin{enumerate}[(i)]
	\item $w_0 = a_1 \cdots a_k$;
	\item for every $i < l$, the word $w_{i+1}$ is a subword of $w_i$ of length $|w_i| - 1$;
	\item for every $i \leq l$, the word $w_i$ is normal;
	\item for every $i \leq l$, $w_isw_i^{-1} = wsw$;
	\item $w_lsw_l^{-1}$ is normal (and so $sp(w_l), sp(s) \subseteq T$);
	\item $w_l$ is a subword of $a_{q_{1}} \cdots a_{q_n}$.
\end{enumerate}
	For $i < l$, let $a_i$ be the letter witnessing that $w_{l-i}$ is a subword of $w_{l-(i+1)}$ of length $|w_{l-(i+1)}| - 1$, and consider the sequence $((a_i, a_i))_{i < l}$. Then, because of conditions (b) and (c) of Lemma \ref{lemma_for_subword_lemma}, for every $X = \{ i_1 < \cdots < i_{m} \} \subseteq l$, the pairs $((a_i, a_i))_{i \in X}$ can be put back into the word $w_lsw_l^{-1}$ following the order $(a_{i_1}, a_{i_1}) < \cdots < (a_{i_{m}}, a_{i_{m}})$. This suffices, since $w_l$ is a subword of $a_{q_{1}} \cdots a_{q_n}$.
\end{proof}

	The following corollary is immediate from Lemma \ref{subword_lemma}. This is fact is known for any Coxeter group, see e.g. \cite[Corollary 1.4]{gal}.

	\begin{corollary}\label{reflections_prop} Let $(W, S)$ be a Coxeter system and $T \subseteq S$. Then
$$R(W, S) \cap W_T = R(W_T, T).$$
\end{corollary}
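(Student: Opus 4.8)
The goal is to prove that $R(W, S) \cap W_T = R(W_T, T)$ for a Coxeter system $(W,S)$ and $T \subseteq S$. I would establish the two inclusions separately.

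The inclusion $R(W_T, T) \subseteq R(W, S) \cap W_T$ is the easy direction and I would dispatch it first. If $r \in R(W_T, T)$, then $r = u t u^{-1}$ for some $t \in T$ and some $u \in W_T$. Since $t \in T \subseteq S$ and $u \in W_T \subseteq W$, we immediately have $r \in R(W, S)$; and since $W_T$ is a subgroup containing $u$ and $t$, we have $r = u t u^{-1} \in W_T$. Hence $r \in R(W, S) \cap W_T$.

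The substantive inclusion is $R(W, S) \cap W_T \subseteq R(W_T, T)$, and for this I would invoke Lemma \ref{subword_lemma} directly, which is precisely the combinatorial engine built for this purpose. Take any $r \in R(W, S) \cap W_T$. Writing $r = w s w^{-1}$ with $s \in S$ and $w \in W$, fix a normal form $a_1 \cdots a_k$ for $w$. Applying Lemma \ref{subword_lemma}, with $a_{q_1} \cdots a_{q_n}$ the subword of $a_1 \cdots a_k$ obtained by deleting every letter lying in $S - T$, we obtain
\begin{equation*}
w s w^{-1} = a_{q_1} \cdots a_{q_n} \, s \, a_{q_n} \cdots a_{q_1}.
\end{equation*}
By construction each $a_{q_i}$ lies in $T$, so the element $u := a_{q_1} \cdots a_{q_n}$ belongs to $W_T$. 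Moreover, condition (v) in the proof of Lemma \ref{subword_lemma} guarantees that the displayed expression is a normal form for $w s w^{-1}$ with $sp(w_l), sp(s) \subseteq T$; in particular $s \in T$. Therefore $r = u s u^{-1}$ with $u \in W_T$ and $s \in T$, which is exactly the statement that $r \in R(W_T, T)$.

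The only place where genuine care is required is the verification that Lemma \ref{subword_lemma} forces $s \in T$: a priori one might worry that a reflection by a generator $s \in S - T$ could still land inside $W_T$. This is ruled out by the normality conclusion of the lemma, since if the canonical expression $a_{q_1} \cdots a_{q_n} s a_{q_n} \cdots a_{q_1}$ is a normal form for an element of $W_T$, then by the well-definedness of support (Theorem \ref{normal_form} and the remarks on $sp$) every letter of that normal form, including the central $s$, must lie in $T$. Thus no real obstacle remains, and the corollary follows immediately, as claimed.
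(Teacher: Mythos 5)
Your proof is correct and follows exactly the route the paper intends: the paper simply declares the corollary ``immediate from Lemma \ref{subword_lemma}'', and your write-up supplies precisely the details of that deduction (the trivial inclusion, the application of the subword lemma to get $u \in W_T$, and the support argument forcing $s \in T$). No gaps; the extra care you take over $s \in T$ is legitimate and is justified either by condition (v) in the lemma's proof or, as you note, by the well-definedness of supports.
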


	We also need an explicit description of centralizers of Coxeter generators.

	\begin{lemma}[Tits \cite{tits}]\label{centralizers} Let $W$ be a right-angled Coxeter group and $v \in \Gamma_W$. Then the centralizer $C_W(v)$ of $v$ in $W$ is the parabolic subgroup $W_{st(v)}$.
\end{lemma}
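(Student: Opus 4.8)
The plan is to prove the two inclusions $W_{st(v)} \subseteq C_W(v)$ and $C_W(v) \subseteq W_{st(v)}$ separately. For the first, it suffices to check that each Coxeter generator in $st(v)$ commutes with $v$. This is immediate: $v$ commutes with itself, and for $v' \in N(v)$ we have $m(v,v') = 2$ by definition of the edge relation, so $(vv')^2 = e$; using $v^2 = (v')^2 = e$ this rearranges to $vv' = v'v$. Since $W_{st(v)}$ is generated by $st(v)$, every element of $W_{st(v)}$ centralizes $v$. Conversely, if $v' \notin st(v)$ then $m(v,v') = \infty$ and $\langle v, v' \rangle$ is infinite dihedral, so $v$ and $v'$ do not commute; I record this dichotomy for use below.

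For the harder inclusion, take $w \in C_W(v)$, so that $wvw^{-1} = v$, and argue by induction on $|w|$. If $|w| = 0$ then $w = e \in W_{st(v)}$. Suppose $|w| = k \geq 1$ and fix a normal form $w = a_1 \cdots a_k$. Then the word $a_1 \cdots a_k\, v\, a_k \cdots a_1$ represents $wvw^{-1} = v$; as it has length $2k+1 \geq 3$ while $v$ has length $1$, it is \emph{not} a normal form. Lemma \ref{lemma_for_subword_lemma} (applied with $s = v$) therefore yields an index $1 \leq i \leq k$ such that, writing $w' = a_1 \cdots a_{i-1} a_{i+1} \cdots a_k$, conditions (a)--(d) hold. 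Condition (c) says $a_i$ commutes with $v$, which by the dichotomy above forces $a_i \in st(v)$. Condition (a) gives $w' v (w')^{-1} = wvw^{-1} = v$, so $w' \in C_W(v)$, and condition (d) says $w'$ is a normal form with $|w'| = k-1$. By the induction hypothesis $sp(w') \subseteq st(v)$, and since $sp(w) = sp(w') \cup \{a_i\}$ with $a_i \in st(v)$, we conclude $sp(w) \subseteq st(v)$, i.e. $w \in W_{st(v)}$.

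The crux of the argument is the extraction, at each step, of a letter lying in $st(v)$; this is exactly what condition (c) of Lemma \ref{lemma_for_subword_lemma} provides, and it is the step where the right-angled hypothesis (via the dichotomy ``commutes with $v$ $\iff$ lies in $st(v)$'') is essential. I expect the main subtlety to be recognizing that the coarser Lemma \ref{subword_lemma} does \emph{not} by itself suffice here: applied with $T = st(v)$ it only yields the identity $v = a_{q_{1}} \cdots a_{q_n} v a_{q_n} \cdots a_{q_{1}}$ among the \emph{retained} letters, an identity that is automatically consistent with $w$ possessing further letters outside $st(v)$, since those retained letters already centralize $v$ by the first inclusion. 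It is precisely the finer information that each \emph{deleted} letter centralizes $v$ --- which only the inductive use of Lemma \ref{lemma_for_subword_lemma} delivers --- that rules out letters outside $st(v)$ and closes the argument.
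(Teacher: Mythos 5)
Your proof is correct. Note that the paper does not actually prove this lemma: it simply cites \cite[Corollary 3]{tits}, so there is no internal argument to compare against. What you have done is supply a self-contained derivation from the paper's own combinatorial toolkit, and it holds up: the easy inclusion $W_{st(v)} \subseteq C_W(v)$ is exactly as you say; the dichotomy (two distinct generators commute iff they are adjacent, since otherwise they generate an infinite dihedral parabolic) is standard and uses only that parabolics are Coxeter groups on the induced subgraph; and the induction on $|w|$ via Lemma \ref{lemma_for_subword_lemma} is sound --- the hypothesis of that lemma is met because $a_1\cdots a_k\,v\,a_k\cdots a_1$ has length $2k+1>1=|v|$, condition (c) places the extracted letter in $st(v)$, condition (a) shows the shortened element still centralizes $v$, and condition (d) drops the length by one. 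There is no circularity, since Lemma \ref{lemma_for_subword_lemma} is proved from Theorem \ref{normal_form} and Propositions \ref{bark_prop_2}--\ref{bark_prop} without reference to centralizers. Your closing remark is also apt: Lemma \ref{subword_lemma} alone (with $T=st(v)$) would not force the deleted letters into $st(v)$, so the finer, letter-by-letter use of Lemma \ref{lemma_for_subword_lemma} is genuinely needed. The only cosmetic quibble is that $sp(w)=sp(w')\cup\{a_i\}$ should strictly be read as the inclusion $sp(w)\subseteq sp(w')\cup\{a_i\}$, which is all the argument requires.
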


\begin{proof} \cite[Corollary 3]{tits}.
\end{proof}
	
	We now go back to the main theme of this section, i.e. strong rigidity. To this end, we need two lemmas. These lemmas are essentially Theorem 3 and Lemma 4 of \cite{servatius} proved in the context of Coxeter groups (\cite{servatius} proves this fact for Artin groups (a.k.a graph groups)).

	\begin{lemma}\label{Servatius_lemma} Let $W$ be a right-angled Coxeter group, $\alpha \in Spe(W)$, $v \in \Gamma_W$ and $Y$ a connected component of $\Gamma_W - st(v)$. Then if $v \in sp(\alpha(y))$ for some $y \in Y$, then $v \in sp(\alpha(x))$ for every $x \in Y$.
\end{lemma}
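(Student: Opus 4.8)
The plan is to first reduce to the case where $x$ and $y$ are adjacent in $Y$, and then to exploit the commutation that adjacency forces on the $\alpha$-images. Since $Y$ is connected, any $x \in Y$ is joined to the given $y \in Y$ (with $v \in sp(\alpha(y))$) by a path inside $Y$; propagating the statement across each edge of this path reduces the lemma to the following assertion: if $x, y \in Y$ are adjacent and $v \in sp(\alpha(x))$, then $v \in sp(\alpha(y))$. Since $x,y$ are adjacent we have $m(x,y)=2$, so $xy=yx$, and because $\alpha$ is an automorphism $\alpha(x)\alpha(y) = \alpha(xy) = \alpha(yx) = \alpha(y)\alpha(x)$; that is, $\alpha(x)$ and $\alpha(y)$ commute.

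The key is to set up the computation around the \emph{target} vertex $y$ rather than $x$. As $\alpha \in Spe(W)$, the element $\alpha(y)$ is conjugate to $y$, so we may write $\alpha(y) = w y w^{-1}$ choosing $w$ of minimal length among all such conjugators. Minimality guarantees that the palindromic word $w y w^{-1}$ is already reduced: otherwise Lemma \ref{lemma_for_subword_lemma} (applied with $s = y$) would let us delete a letter of $w$ and produce a shorter conjugator, a contradiction. Hence $sp(\alpha(y)) = sp(w) \cup \{y\}$. Now, since $\alpha(x)$ commutes with $\alpha(y)$, we have $\alpha(x) \in C_W(\alpha(y)) = C_W(w y w^{-1}) = w\, C_W(y)\, w^{-1} = w\, W_{st(y)}\, w^{-1}$, where the last equality is Lemma \ref{centralizers}. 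Thus $\alpha(x) = w c w^{-1}$ for some $c \in W_{st(y)}$, so that $sp(c) \subseteq st(y)$.

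It remains to observe that conjugation cannot enlarge supports: the word $w c w^{-1}$ is a concatenation of reduced words whose letters all lie in $sp(w) \cup sp(c)$, and the reduction to normal form proceeds by the moves $(M_1)$ and $(M_2)$ of Theorem \ref{normal_form}, neither of which introduces a new letter. Therefore $sp(\alpha(x)) = sp(w c w^{-1}) \subseteq sp(w) \cup sp(c) \subseteq sp(w) \cup st(y)$. Finally, since $y \in Y \subseteq \Gamma_W - st(v)$ we have $y \notin st(v)$, and by symmetry of adjacency $v \notin st(y)$; combined with $v \neq y$ and the hypothesis $v \in sp(\alpha(x))$, this forces $v \in sp(w) \subseteq sp(w) \cup \{y\} = sp(\alpha(y))$, as desired.

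The argument is short once the pieces are arranged correctly, so the only real obstacle is conceptual: recognizing that one should apply the centralizer description (Lemma \ref{centralizers}) to the vertex $y$ whose image's support one wants to understand, so that $\alpha(x)$ lands in the coset $w W_{st(y)} w^{-1}$ and the trivial support-monotonicity of conjugation does the rest. I expect the minor points needing care to be the justification that a minimal-length conjugator yields a reduced palindrome (handled by Lemma \ref{lemma_for_subword_lemma}) and the clean reduction to adjacent vertices via connectedness of $Y$.
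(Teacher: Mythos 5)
Your proof is correct. It shares the paper's scaffolding --- the reduction to adjacent vertices via connectedness of $Y$, and the observation that $\alpha(x) \in C_W(\alpha(y)) = wC_W(y)w^{-1} = wW_{st(y)}w^{-1}$ via Lemma \ref{centralizers} --- but the heart of the argument runs in the opposite direction. The paper anchors the centralizer computation at the vertex already known to have $v$ in the support of its image, deduces $v \in sp(w)$, and then must show that $v$ \emph{survives} in the possibly non-reduced expression $\alpha(x) = wy'w^{-1}$; this forces a two-case analysis according to whether $x \in sp(y')$, with one case excluded by a parity-of-occurrences argument and the other handled by the non-adjacency of $x$ and $v$. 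You instead anchor at the \emph{target} vertex, normalize the conjugator $w$ to have minimal length so that $sp(\alpha(y)) = sp(w) \cup \{y\}$ exactly (your appeal to Lemma \ref{lemma_for_subword_lemma} for this is valid, since conclusion (a) of that lemma produces a strictly shorter conjugator whenever the palindrome is not reduced), and then only need the trivial monotonicity $sp(\alpha(x)) \subseteq sp(w) \cup st(y)$ together with $v \notin st(y)$ to force $v \in sp(w) \subseteq sp(\alpha(y))$. Since the edge claim is symmetric in its two vertices, this proves the same propagation step while avoiding the case analysis and the parity argument entirely; the price is the small extra step of verifying that a minimal-length conjugator yields a reduced palindrome. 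Your version is arguably the cleaner of the two.
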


	\begin{proof} We show that $v \in sp(\alpha(x))$ for any $x$ adjacent to $y$ and not adjacent to $v$, the result follows by the connectedness of $Y$. Now, $\alpha(y) = wyw^{-1}$ for some $w \in W$, because $\alpha \in Spe(W)$, and $sp(wyw^{-1}) \subseteq sp(w) \cup sp(y)$ (cfr. Theorem \ref{normal_form}). By hypothesis $v \in sp(\alpha(y))$, and evidently $v \not\in sp(y) = \{ y \}$, thus $v \in sp(w)$. Consider now $\alpha(x)$. As for $\alpha(y)$, there exists $p \in W$ such that $\alpha(x) = pxp^{-1}$. By the choice of $x$, the element $y$ commutes with $x$, and so $\alpha(y)$ commutes with $\alpha(x)$. That is, $\alpha(x) \in C_W(\alpha(y))$. By Lemma \ref{centralizers}
	$$C_W(wyw^{-1}) = wC_W(y)w^{-1} = wW_{st(y)}w^{-1},$$
and so $\alpha(x) \in wW_{st(y)}w^{-1}$, i.e. $\alpha(x) = wy'w^{-1}$ for some $y' \in st(y)$. Furthermore, being $\alpha(x)$ conjugate to $x$, we have $x \in sp(\alpha(x)) = sp(wy'w^{-1})$. We distinguish two cases.
\newline {\bf Case 1.} $x \in sp(y')$. If this is the case, then $v \in sp(wy'w^{-1})$, because $x$ is not adjacent to $v$ (cfr. Theorem \ref{normal_form}).
\newline {\bf Case 2.} $x \not\in sp(y')$. We show that this case is not possible. If $x \not\in sp(y')$, then $x \in sp(w) -sp(y')$. Thus, for any normal form $w_1 \cdots w_k$ and $y'_1 \cdots y'_m$ for $w$ and $y'$, respectively, we have that $x$ occurs an even number of times in $$w_1 \cdots w_ky'_1 \cdots y'_mw_k \cdots w_1.$$
Hence, $x$ occurs an even number of times also in $p_1 \cdots p_l x p_l \cdots p_1$, for $p_1 \cdots p_l$ a normal form for $p$ (see e.g. \cite[pg. 14]{meyers}), but this is obviously absurd.
\end{proof}

	\begin{lemma}\label{first_strong_rig_lemma} Let $W$ be a right-angled Coxeter group such that $\Gamma_W$ satisfies the following conditions:
\begin{enumerate}[(a)]
\item $\Gamma_W$ is star-connected;
\item $\Gamma_W$ is triangle-free;
\item $\Gamma_W$ contains a copy of $P_4$ (the path of length $4$) as a subgraph (not necessarily induced).
\end{enumerate}
Then for every $\alpha \in Spe(W)$ there exists $w \in W$ such that $w \alpha w^{-1}$ fixes $P_4$ pointwise.
\end{lemma}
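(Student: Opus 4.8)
The plan is to take an arbitrary $\alpha \in Spe(W)$ and a fixed copy of $P_4$ in $\Gamma_W$, say with vertices $v_1, v_2, v_3, v_4$ and edges $v_1 v_2$, $v_2 v_3$, $v_3 v_4$, and successively conjugate $\alpha$ to kill the ``conjugating parts'' of the images of these four generators, one vertex at a time. Since $\alpha \in Spe(W)$, for each $i$ we have $\alpha(v_i) = w_i v_i w_i^{-1}$ for some $w_i \in W$, and the support calculus from Theorem \ref{normal_form} gives $sp(\alpha(v_i)) \subseteq sp(w_i) \cup \{v_i\}$. The idea is to first conjugate by some $w$ so that $\alpha(v_2)$ becomes exactly $v_2$; after replacing $\alpha$ by $w^{-1}\alpha w$ we may assume $\alpha(v_2) = v_2$, i.e. $v_2$ is fixed. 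Then I would exploit the commuting relations along the path together with Lemma \ref{centralizers} to constrain the images of the neighbouring vertices.

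The key structural input is the interplay between adjacency in $P_4$ and centralizers. First I would establish the reduction to $v_2$ fixed: since $\alpha(v_2)$ is a reflection conjugate to $v_2$, replacing $\alpha$ by a suitable inner twist makes $\alpha(v_2)=v_2$ (inner automorphisms lie in $Spe(W)$, so this stays inside $Spe(W)$). Now $v_1$ and $v_3$ both commute with $v_2$, hence $\alpha(v_1)$ and $\alpha(v_3)$ commute with $\alpha(v_2)=v_2$, so by Lemma \ref{centralizers} both lie in $C_W(v_2)=W_{st(v_2)}$. Writing $\alpha(v_1)=w_1 v_1 w_1^{-1}$, this forces the conjugator to be controlled inside $st(v_2)$; using triangle-freeness (condition (b)) the only neighbours of $v_2$ are $v_1, v_3$ and they are not adjacent to each other, which tightly restricts the possible supports and lets me conclude $\alpha(v_1)=v_1$ and $\alpha(v_3)=v_3$ after a possible further inner adjustment that still fixes $v_2$. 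Finally, $v_3$ being now fixed, the same centralizer argument applied to $v_4$ (which commutes with $v_3$) forces $\alpha(v_4) \in W_{st(v_3)}$, and triangle-freeness again pins down $\alpha(v_4)=v_4$.

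The main obstacle I anticipate is coordinating the successive inner conjugations so that fixing a later vertex does not disturb the vertices already fixed. Concretely, after arranging $\alpha(v_2)=v_2$, any further conjugation used to straighten $\alpha(v_1)$ or $\alpha(v_3)$ must be by an element of $C_W(v_2)=W_{st(v_2)}$ to preserve the fixing of $v_2$; I would have to verify that the element of $st(v_2)$ needed to absorb the conjugator of $\alpha(v_1)$ can be chosen to simultaneously fix, or at least not reintroduce conjugation into, $\alpha(v_3)$. This is where Lemma \ref{Servatius_lemma} should do the real work: it propagates the presence or absence of a vertex $v$ in the supports $sp(\alpha(x))$ across a connected component of $\Gamma_W - st(v)$, and star-connectedness (condition (a)) guarantees these complements are connected. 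I expect that Lemma \ref{Servatius_lemma}, applied with $v$ ranging over the path vertices and $Y$ the relevant connected component of $\Gamma_W - st(v)$, is exactly what rules out ``stray'' letters in the conjugators and lets the four local adjustments be amalgamated into a single $w \in W$ with $w \alpha w^{-1}$ fixing all of $P_4$ pointwise. Verifying that these component-wise support constraints genuinely assemble into one consistent conjugator, rather than four mutually incompatible local ones, is the crux of the argument.
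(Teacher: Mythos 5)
Your overall strategy --- successive inner conjugations along the path, using Lemma \ref{centralizers} to control the conjugators and Lemma \ref{Servatius_lemma} plus star-connectedness to exclude stray letters --- is the same as the paper's, which straightens $a$, then $b$, then $c$, then $d$ in order (so that at each stage the new conjugator is supported on already-fixed vertices commuting with everything fixed so far, and nothing already fixed is disturbed). But two points in your sketch do not survive scrutiny. First, your claim that by triangle-freeness ``the only neighbours of $v_2$ are $v_1, v_3$'' is false: triangle-freeness only makes $N(v_2)$ an independent set; $v_2$ may have arbitrarily many neighbours, and those extra neighbours are exactly the potential stray letters you must exclude. Second, and relatedly, you propose to apply Lemma \ref{Servatius_lemma} ``with $v$ ranging over the path vertices'', whereas the correct (and the paper's) application takes $v = x$ to be an arbitrary element of $N(b) - \{a, c\}$: triangle-freeness puts both $a$ and $c$ in $\Gamma_W - st(x)$, star-connectedness puts them in the same component, and since $x \notin sp(\alpha_2(a)) = \{a\}$ the lemma forces $x \notin sp(\alpha_2(c))$, yielding $sp(\alpha_2(c)) \subseteq \{a, c\}$.

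Even after that, you are not done: $sp(\alpha_2(c)) \subseteq \{a,c\}$ does not by itself give $\alpha_2(c) = c$ up to a harmless conjugation. The paper runs the same support argument for $\alpha_2^{-1}$ to conclude that $\alpha_2$ restricts to an automorphism of the rank-two parabolic $W_{\{a,c\}}$, lies in $Spe(W_{\{a,c\}})$ by Corollary \ref{reflections_prop}, and hence is inner there by the finite-rank case of Theorem \ref{recap_th}(b); only then does one get $\alpha_2(c) \in \{c, aca\}$, i.e. $sp(r) \subseteq \{a\}$, so that conjugating by $r^{-1}$ fixes $c$ without moving $a$ or $b$. The same two-step argument (support containment via Lemma \ref{Servatius_lemma}, then innerness on a rank-two parabolic) handles $d$, giving $sp(t) \subseteq \{b\}$. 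Your proposal names the right tools but leaves precisely this mechanism --- the crux you yourself flag --- unproved, and the one concrete justification you offer in its place is incorrect.
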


	\begin{proof} Let $P_4 = aEbEcEd$ and $\alpha \in Spe(W)$. Then $\alpha(a) = pap^{-1}$ and so conjugating $\alpha$ by $p^{-1}$ we get $\alpha_1 \in Spe(W)$ such that $\alpha_1(a) = a$. Now, $a$ and $b$ commute and so we have $\alpha_1(b) = qbq^{-1}$ with $sp(q) \subseteq N(a)$ (cfr. Lemma \ref{centralizers}). Thus, conjugating $\alpha_1$ by $q^{-1}$ we get $\alpha_2 \in Spe(W)$ such that $\alpha_2(a) = a$ and $\alpha_2(b) = b$. Similarly, $b$ and $c$ commute and so we have $\alpha_2(c) = rcr^{-1}$ with $sp(r) \subseteq N(b)$. Let $x \in N(b) - \{ a, c\}$, then by the triangle-freeness of $\Gamma_W$, $x$ is adjacent neither to $a$ nor to $c$, and so $a, c \in \Gamma_W - st(x)$. By the star-connectedness of $\Gamma_W$, $a$ and $c$ are connected in $\Gamma_W - st(x)$, and so given that $x \not\in sp(\alpha_2(a)) = sp(a) = \{ a \}$, by Lemma \ref{Servatius_lemma} we have $x \not\in sp(\alpha_2(c))$. Hence $sp(\alpha_2(c)) \subseteq \{ a, c \}$. Then $\langle \alpha_2(a) = a, \alpha_2(c) \rangle_W \subseteq \langle a, c \rangle_W$. On the other hand, $\alpha_2^{-1} \in Spe(W)$, $\alpha_2^{-1}(a) = a$ and $\alpha_2^{-1}(b) = b$, and so the same argument used for $\alpha_2$ shows that $sp(\alpha_2^{-1}(c)) \subseteq \{ a, c \}$. Thus, $\alpha_2^{-1}(c) \in \langle a, c \rangle_W$, from which it follows that
	$$c \in \alpha_2(\langle a, c \rangle_W) = \langle \alpha_2(a) = a, \alpha_2(c) \rangle_W,$$
i.e. $\langle a, c \rangle_W \subseteq \langle a, \alpha_2(c) \rangle_W$. Hence,
	$$\langle a, c \rangle_W = \langle a, \alpha_2(c) \rangle_W.$$
That is, $\alpha_2$ restricted to $\langle a, c \rangle_W = W_{\{ a, c \}} \in Aut(W_{\{ a, c \}})$. Furthermore, because of Corollary \ref{reflections_prop} we see that $\alpha_2 \in Spe(W_{\{ a, c \}})$. Also, $(\{ a, c \}, E) = (\{ a, c \}, \emptyset)$ is star-connected, and so by Theorem \ref{recap_th}(b) we have $\alpha_2 \in Inn(W_{\{ a, c \}})$. But then obviously it must be the case that $\alpha_2(c)$ is either $c$ or $aca$, because otherwise $\alpha_2(a) \neq a$. It follows that $sp(r) \subseteq \{ a \}$, and so conjugating $\alpha_2$ by $r^{-1}$ we get $\alpha_3 \in Spe(W)$ such that $\alpha_3(a) = a$, $\alpha_3(b) = b$ and $\alpha_3(c) = c$. Using the same argument for $\alpha_3(d) = tdt^{-1}$, we see that $sp(t) \subseteq \{ b \}$, and so conjugating $\alpha_3$ by $t^{-1}$ we get $\alpha_4 \in Spe(W)$ such that $\alpha_4(a) = a$, $\alpha_4(b) = b$, $\alpha_4(c) = c$ and $\alpha_4(d) = d$.
\end{proof}

	We now arrive at the first sufficient condition for $Spe(W) = Inn(W)$. This theorem takes inspiration from \cite[Theorem 6]{servatius}, where his use of Theorem 3 and Lemma 4 is replaced by our Lemmas \ref{Servatius_lemma} and \ref{first_strong_rig_lemma}.

	\begin{theorem}\label{first_strong_rig_th} Let $W$ be a right-angled Coxeter group such that $\Gamma_W$ satisfies the following conditions:
\begin{enumerate}[(a)]
\item $\Gamma_W$ is star-connected;
\item $\Gamma_W$ is triangle-free;
\item $\Gamma_W$ contains $P_4$ as a subgraph.
\end{enumerate}
Then $Spe(W) = Inn(W)$.
\end{theorem}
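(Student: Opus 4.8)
The plan is to deduce the theorem from Lemma \ref{first_strong_rig_lemma} by showing that any $\alpha\in Spe(W)$ fixing $P_4=aEbEcEd$ pointwise must be the identity. Since $Inn(W)$ is normal in $Aut(W)$, the automorphisms $\alpha$ and $\beta:=w\alpha w^{-1}$ produced by Lemma \ref{first_strong_rig_lemma} lie in the same coset modulo $Inn(W)$; hence $\alpha\in Inn(W)$ iff $\beta\in Inn(W)$, and as $Inn(W)\subseteq Spe(W)$ it suffices to prove that such a $\beta$ equals $\mathrm{id}$. Two elementary reductions guide the rest: first, a generator $x$ is fixed as soon as $sp(\beta(x))=\{x\}$, because the only elements of $W$ supported on $\{x\}$ are $e$ and $x$, while $\beta(x)$ is a nontrivial involution (being conjugate to $x$, as $\beta\in Spe(W)$); second, it therefore suffices to control the supports $sp(\beta(x))$ for every vertex $x$.

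The heart of the argument is a support-restriction step. I would first observe that star-connectedness says precisely that $\Gamma_W-st(v)$ is connected for every $v$, so by Lemma \ref{Servatius_lemma} the set $\{x\in\Gamma_W-st(v):v\in sp(\beta(x))\}$ is either empty or all of $\Gamma_W-st(v)$. Now for any vertex $v$ one can pick a path-vertex $z\in\{a,b,c,d\}$ with $z\neq v$ and $z\notin st(v)$: indeed, triangle-freeness forbids $v$ from being adjacent to two consecutive vertices of the path (that would create a triangle), so some $z\in\{a,b,c,d\}\setminus\{v\}$ is non-adjacent to $v$. Since $\beta$ fixes the path, $v\notin sp(\beta(z))=\{z\}$, so the set above is not everything, hence empty. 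Reading this off for all $v$ yields: if $v\in sp(\beta(x))$ with $v\neq x$, then $x\in st(v)$, i.e. $v$ and $x$ are adjacent. Equivalently $sp(\beta(x))\subseteq st(x)$ for every vertex $x$.

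It remains to upgrade $sp(\beta(x))\subseteq st(x)$ to $\beta(x)=x$, and this is where I expect the real work to sit. By triangle-freeness the neighbourhood $N(x)$ is an independent set, so the graph induced on $st(x)$ is a star and $x$ commutes with every generator of $W_{st(x)}$; thus $x$ is central in $W_{st(x)}$. On the other hand $\beta(x)$ is a reflection of $(W,S)$ lying in $W_{st(x)}$, so by Corollary \ref{reflections_prop} we may write $\beta(x)=gyg^{-1}$ with $g\in W_{st(x)}$ and $y\in st(x)$. Since $\beta\in Spe(W)$, $\beta(x)$ is conjugate to $x$ in $W$, hence so is $y$; but in the right-angled case distinct generators are never conjugate (they map to distinct basis vectors of the abelianization $W^{ab}\cong\bigoplus_{s\in S}\mathbb{Z}/2\mathbb{Z}$), forcing $y=x$. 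Centrality of $x$ in $W_{st(x)}$ then gives $\beta(x)=gxg^{-1}=x$. As $x$ was arbitrary and $S=\Gamma_W$ generates $W$, we conclude $\beta=\mathrm{id}$, so $\alpha\in Inn(W)$; combined with the trivial inclusion $Inn(W)\subseteq Spe(W)$ this yields $Spe(W)=Inn(W)$.

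The main obstacle, I think, is the last paragraph: one must correctly pin down the structure of the parabolic $W_{st(x)}$ (using triangle-freeness to get the star, hence centrality of $x$) and combine it with the reflection description of Corollary \ref{reflections_prop} and the non-conjugacy of distinct generators to rule out that $\beta(x)$ is a conjugate of a \emph{neighbour} of $x$ rather than of $x$ itself. The preceding support-restriction step is conceptually the key idea but is comparatively soft, once one notices that star-connectedness turns Lemma \ref{Servatius_lemma} into an all-or-nothing dichotomy on the single component $\Gamma_W-st(v)$.
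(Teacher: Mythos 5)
Your proof is correct, and its skeleton coincides with the paper's: both reduce via Lemma \ref{first_strong_rig_lemma} to an $\alpha_1\in Spe(W)$ fixing $P_4$ pointwise, and both exploit the fact that star-connectedness makes $\Gamma_W-st(v)$ a single component, so that Lemma \ref{Servatius_lemma} becomes an all-or-nothing dichotomy anchored at a path vertex $z\notin st(v)$ supplied by triangle-freeness. Where you genuinely diverge is the endgame. The paper argues by contradiction on a single non-fixed generator $y$: writing $\alpha_1(y)=gyg^{-1}\neq y$, it extracts a letter $v\in sp(g)$ with $v\notin st(y)$, asserts that this $v$ survives into $sp(\alpha_1(y))$, and then transports $v$ into $sp(\alpha_1(e))=\{e\}$ for a path vertex $e$ via Lemma \ref{Servatius_lemma}. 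You instead quantify over $v$ first and prove the uniform bound $sp(\beta(x))\subseteq st(x)$ for every generator $x$, and then finish with Corollary \ref{reflections_prop}, the non-conjugacy of distinct generators (via the abelianization $\bigoplus_{s\in S}\mathbb{Z}/2\mathbb{Z}$), and the fact that the conjugator lies in $W_{st(x)}=C_W(x)$. Your route is slightly longer but cleaner at one point: the paper's step ``furthermore, $v\in sp(\alpha_1(y))$'' for the chosen $v\in sp(g)$ is not immediate (one must pick $v$ compatibly with the reduction of $g y g^{-1}$ to normal form, essentially via Lemma \ref{lemma_for_subword_lemma}), whereas you work with $sp(\beta(x))$ from the outset and never need that claim; the price is the extra inputs of Corollary \ref{reflections_prop} and the abelianization argument. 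One small simplification available to you: once $y=x$ is forced, $gxg^{-1}=x$ follows directly from $g\in W_{st(x)}=C_W(x)$ (Lemma \ref{centralizers}), so the detour through triangle-freeness to see that $st(x)$ induces a star is not actually needed there.
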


	\begin{proof} Let $\alpha \in Spe(W)$, then by Lemma \ref{first_strong_rig_lemma} there exists $w \in W$ such that $w \alpha w^{-1}$ fixes $P_4 = aEbEcEd$ pointwise. We show that $\alpha_{1} = w \alpha w^{-1}$ is the identity $id_W$ on $W$. This of course suffices, since then 
	$$\alpha = w^{-1}w \alpha w^{-1}w = w^{-1}id_Ww = Inn(w^{-1}),$$
where, for $x \in W$, $Inn(x)$ denotes the inner automorphism determined by $x$. To this end, let $y \not\in P_4$ and suppose that $\alpha_1(y) \neq g y g^{-1}$. Then there is $v \in sp(g)$ such that $v \neq y$ and $v$ is not adjacent to $y$. By the triangle-freeness of $\Gamma_W$ there exists $e \in \{ a, b, c, d \} - \{ v \}$ such that $e$ is not adjacent to $v$. It follows that $\Gamma - st(v)$ contains $y$ and $e$. Furthermore, $v \in sp(\alpha_1(y))$ and so by Lemma \ref{Servatius_lemma} we have 
	$$v \in sp(\alpha_1(e)) = sp(e) = \{e\},$$
which is a contradiction. Thus, we must have $\alpha_1(y) = y$. It follows that $\alpha_1 = id_W$.
%\newline {\bf Case 2.} $v \not\in P_4$. If this is the case, then by the triangle-freeness of $\Gamma_W$ there exists $e \in \{ a, b, c, d \}$ such that $v$ is not adjacent to $e$. It follows that $\Gamma - st(v)$ contains $y$, and $e$. Furthermore, $v \in sp(\alpha_1(y))$ and so by Lemma \ref{Servatius_lemma} we have 
%	$$v \in sp(\alpha_1(e)) = sp(e) = \{e\},$$ which is a contradiction. 
%\newline Thus, in either cases we must have $\alpha_1(y) = y$.  It follows that $\alpha_1 = id_W$.
\end{proof}

	\begin{corollary}\label{first_strong_rig_cor} Let $W$ be as in Theorem \ref{first_strong_rig_th} and suppose that in addition $\Gamma_W$ has the star property. Then $W$ is strongly rigid.
\end{corollary}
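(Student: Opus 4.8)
The plan is to recognize that the corollary follows immediately by feeding the preceding results into the characterization of strong rigidity provided by Proposition \ref{castella_remark}. That proposition reduces strong rigidity of $W$ to the conjunction of the two equalities $Inn(W) = Spe(W)$ and $Aut(\Gamma) = Aut(W, F(\Gamma))$, so my strategy is to verify each of these two equalities separately, using the two distinct hypotheses on $\Gamma_W$ that are at our disposal.

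First I would observe that, since $W$ is as in Theorem \ref{first_strong_rig_th}, the graph $\Gamma_W$ is star-connected, triangle-free, and contains $P_4$ as a subgraph. These are precisely the hypotheses of that theorem, and so it yields the first equality $Spe(W) = Inn(W)$ with no further work. Next, the additional hypothesis of the corollary is exactly that $\Gamma_W$ has the star property; by Theorem \ref{recap_th}(a), the star property is equivalent to the second equality $Aut(W, F(\Gamma)) = Aut(\Gamma)$, which delivers the remaining identity.

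With both equalities in hand, Proposition \ref{castella_remark} gives that $W$ is strongly rigid, completing the proof. I do not anticipate any genuine obstacle: all the substantial work has already been carried out in Theorem \ref{first_strong_rig_th} (for the $Spe$--$Inn$ equality) and imported from Castella through Theorem \ref{recap_th}(a) (for the automorphism-group equality), so the corollary merely packages these two independent facts via the biconditional of Proposition \ref{castella_remark}. The one point worth keeping in mind is that the hypotheses split cleanly between the two conjuncts — star-connectedness, triangle-freeness and the presence of $P_4$ govern $Spe(W) = Inn(W)$, while the star property governs $Aut(\Gamma) = Aut(W, F(\Gamma))$ — and that both are genuinely required in order to invoke the equivalence of Proposition \ref{castella_remark} in full.
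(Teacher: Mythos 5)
Your proposal is correct and is exactly the paper's argument: the paper's proof simply cites Proposition \ref{castella_remark}, Theorem \ref{recap_th} and Theorem \ref{first_strong_rig_th}, and you have spelled out precisely how those three results combine. No issues.
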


	\begin{proof} Immediate from Proposition \ref{castella_remark}, Theorem \ref{recap_th} and Theorem \ref{first_strong_rig_th}.
\end{proof}

	Finally, we arrive at the second sufficient condition for $Spe(W) = Inn(W)$. This theorem takes inspiration from \cite[Proposition 6]{tits}, although the setting of the reference is quite different from the one in the theorem.

	\begin{theorem}\label{second_strong_rig_th} Let $W$ be a right-angled Coxeter group such that $\Gamma_W$ satisfies the following conditions:
\begin{enumerate}[(a)]
\item $\Gamma_W$ is star-connected;
\end{enumerate}
and either $\Gamma_W$ is finite or there exists $s, s' \in \Gamma_W$ such that:
\begin{enumerate}[(a)]\setcounter{enumi}{1}
\item  $st(s) \cup st(s')$ is finite and star-connected (as an induced subgraph);
\item for every $v \in \Gamma_W$ there exists $a \in st(s) \cup st(s')$ such that $a \neq v$ and $a$ is not adjacent to $v$.
\end{enumerate}
Then $Spe(W) = Inn(W)$.
\end{theorem}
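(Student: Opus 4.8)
The plan is to reduce the general/infinite-rank case to the finite-rank one, exactly as in the proof of Theorem~\ref{first_strong_rig_th}, by showing that every $\alpha \in Spe(W)$ can be conjugated by a suitable inner automorphism so as to fix pointwise a finite ``anchoring'' subgraph, and then to propagate this fixing to all of $\Gamma_W$ using Lemma~\ref{Servatius_lemma}. The role played by $P_4$ in the previous theorem is here played by the finite star-connected induced subgraph $\Sigma := st(s) \cup st(s')$ (when $\Gamma_W$ is infinite), or by all of $\Gamma_W$ (when it is finite). Let me describe the two cases.

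When $\Gamma_W$ is finite, there is essentially nothing to prove beyond quoting the existing machinery: $\Gamma_W$ is star-connected by (a), so by Theorem~\ref{recap_th}(b) we immediately get $Spe(W) = Inn(W)$. So the content is entirely in the infinite case, where Theorem~\ref{recap_th}(b) does \emph{not} apply and we must work by hand. The first main step, for $\alpha \in Spe(W)$, is to \emph{anchor} $\alpha$ on $\Sigma$. Since $\Sigma = st(s)\cup st(s')$ is a finite star-connected induced subgraph, the parabolic subgroup $W_\Sigma$ is a finite-rank right-angled Coxeter group. I would first check, via Corollary~\ref{reflections_prop} and Lemma~\ref{centralizers}, that $\alpha$ restricts (after an inner correction) to an element of $Spe(W_\Sigma)$: concretely, conjugate $\alpha$ so that $\alpha$ fixes $s$, then use $C_W(s)=W_{st(s)}$ to adjust on $st(s)$, and argue as in Lemma~\ref{first_strong_rig_lemma} that the supports of $\alpha(v)$ for $v\in\Sigma$ stay inside $\Sigma$. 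The star-connectedness of $\Sigma$ together with Lemma~\ref{Servatius_lemma} is what keeps these supports from escaping $\Sigma$. Once $\alpha$ restricted to $W_\Sigma$ is seen to lie in $Spe(W_\Sigma)=Inn(W_\Sigma)$ (finite rank, Theorem~\ref{recap_th}(b)), a further inner conjugation produces $\alpha'\in Spe(W)$ fixing $\Sigma$ pointwise.

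The second main step is \emph{propagation}: I claim $\alpha'=\mathrm{id}_W$. Suppose not, so there is $y\in\Gamma_W$ with $\alpha'(y)=gyg^{-1}$ and some $v\in sp(g)$, $v\neq y$, with $v$ not adjacent to $y$ (so $v\in sp(\alpha'(y))$). This is exactly the situation exploited at the end of Theorem~\ref{first_strong_rig_th}: I would invoke hypothesis (c), which guarantees some $a\in\Sigma$ with $a\neq v$ and $a$ not adjacent to $v$. Then $y$ and $a$ both lie in $\Gamma_W - st(v)$, and since $v\in sp(\alpha'(y))$, Lemma~\ref{Servatius_lemma} (applied along the component of $\Gamma_W - st(v)$ joining $y$ to $a$ — here star-connectedness (a) is again essential, to ensure $y$ and $a$ actually lie in the same connected component) forces $v\in sp(\alpha'(a))$. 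But $a\in\Sigma$ is fixed, so $sp(\alpha'(a))=\{a\}$, contradicting $v\neq a$. Hence $\alpha'(y)=y$ for all $y$, giving $\alpha'=\mathrm{id}_W$ and therefore $\alpha\in Inn(W)$.

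\textbf{The main obstacle} I anticipate is the anchoring step in the infinite case, i.e.\ verifying rigorously that $\alpha$ can be corrected to restrict to an automorphism of $W_\Sigma$ lying in $Spe(W_\Sigma)$, rather than merely sending $W_\Sigma$ somewhere with controlled support. The delicate point is that Lemma~\ref{Servatius_lemma} controls whether a single letter $v$ appears in $sp(\alpha(x))$, but to conclude $\alpha(W_\Sigma)=W_\Sigma$ I must rule out \emph{all} letters outside $\Sigma$ from every $sp(\alpha(v))$ with $v\in\Sigma$ simultaneously; this is precisely where hypotheses (b) and (c) on $\Sigma$ must be combined — (b) to get the finite-rank conclusion $Spe(W_\Sigma)=Inn(W_\Sigma)$, and (c) to ensure $\Sigma$ is ``large enough'' that no external letter can hide in the support. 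Once anchoring is secured, the propagation step is a direct adaptation of Theorem~\ref{first_strong_rig_th} and should be routine.
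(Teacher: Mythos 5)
Your proposal follows essentially the same route as the paper: the finite case is dispatched by Theorem \ref{recap_th}(b), and in the infinite case one conjugates $\alpha$ to fix $s$ and then $s'$, observes that it then preserves $W_{st(s)\cup st(s')}$ setwise, applies Corollary \ref{reflections_prop} and the finite-rank result to make it fix $st(s)\cup st(s')$ pointwise, and propagates via Lemma \ref{Servatius_lemma} together with hypothesis (c) exactly as you describe. The ``main obstacle'' you anticipate in the anchoring step dissolves in the paper's treatment: once $\alpha_2$ fixes $s$ and $s'$, it preserves $C_W(s)=W_{st(s)}$ and $C_W(s')=W_{st(s')}$ by Lemma \ref{centralizers}, hence $W_{st(s)\cup st(s')}$ setwise, with no need to chase supports via Lemma \ref{Servatius_lemma} at that stage.
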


\begin{proof} If $\Gamma_W$ is finite, then we know that star-connectedness suffices for $Spe(W) = Inn(W)$. Suppose then that $\Gamma_W$ is infinite (and so conditions (b) and (c) hold). Let $s, s'$ be as in the statement of the theorem and $\alpha \in Spe(W)$. We will show that there exists $w \in W$ such that $w \alpha w^{-1}$ is the identity on $W$. By assumption $\alpha(s) = psp^{-1}$ and so conjugating $\alpha$ by $p^{-1}$ we get $\alpha_1 \in Spe(W)$ such that  $\alpha_1(s) = s$.
Now, $s$ and $s'$ commute and so we have $\alpha_1(s') = qs'q^{-1}$ with $sp(q) \subseteq N(a)$ (cfr. Lemma \ref{centralizers}). Thus, conjugating $\alpha_1$ by $q^{-1}$ we get $\alpha_2 \in Spe(W)$ such that $\alpha_2(s) = s$ and $\alpha_2(s') = s'$.
Given that $C_W(s) = W_{st(s)}$ and $C_W(s') = W_{st(s')}$(cfr. Lemma \ref{centralizers}) we must have that $\alpha_2$ fixes $W_{st(s) \cup st(s')}$ setwise, i.e. $\alpha_2$ restricted to $W_{st(s) \cup st(s')}$ is in $Aut(W_{st(s) \cup st(s')})$. Furthermore, because of Corollary \ref{reflections_prop} we see that $\alpha_2 \in Spe(W_{st(s) \cup st(s')})$. Also, by assumption $st(s) \cup st(s')$ is finite and star-connected, and so we have $\alpha_2 \in Inn(W_{st(s) \cup st(s')})$ (cfr. Theorem \ref{recap_th}(b)). Thus, composing $\alpha_2$ with an inner automorphism, we get $\alpha_3 \in Spe(W)$ which fixes $W_{st(s) \cup st(s')}$ pointwise. We show that $\alpha_3$ fixes every element of $\Gamma_W$. To this end, let $y \not\in st(s) \cup st(s')$ and suppose that $\alpha_3(y) = g y g^{-1}$ is not fixed. Then there is $v \in sp(g)$ such that $v \neq y$ and $v$ is not adjacent to $y$. Notice that because of (c) there exists $a \neq v \in st(s) \cup st(s')$ such that $v$ is not adjacent to $a$. It follows that $\Gamma - st(v)$ contains $y$ and $a$. Furthermore, $v \in sp(\alpha_3(y))$, and so by Lemma \ref{Servatius_lemma} we have
	$$v \in sp(\alpha_3(a)) = sp(a) = \{a\},$$ which is a contradiction. Thus, we must have $\alpha_3(y) = y$. It follows that $\alpha_3 = id_W$.
\end{proof}

	\begin{corollary}\label{second_strong_rig_cor} Let $W$ be as in Theorem \ref{second_strong_rig_th} and suppose that in addition $\Gamma_W$ has the star property. Then $W$ is strongly rigid.
\end{corollary}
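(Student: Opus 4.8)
The plan is to derive strong rigidity from the characterization in Proposition \ref{castella_remark}, which reduces the problem to two independent equalities: $Inn(W) = Spe(W)$ and $Aut(\Gamma) = Aut(W, F(\Gamma))$. Since the hypotheses of the corollary are precisely those of Theorem \ref{second_strong_rig_th} together with the extra assumption that $\Gamma_W$ has the star property, both equalities are already available from the preceding results, so the proof should amount to assembling them rather than proving anything new.

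First I would invoke Theorem \ref{second_strong_rig_th}: the star-connectedness of $\Gamma_W$, combined with either the finiteness of $\Gamma_W$ or the existence of $s, s' \in \Gamma_W$ satisfying conditions (b) and (c), yields $Spe(W) = Inn(W)$. This is the substantive ingredient, but all of its work has been carried out inside that theorem, so here it is simply cited. Next I would use the additional hypothesis that $\Gamma_W$ has the star property: by Theorem \ref{recap_th}(a) this is equivalent to $Aut(W, F(\Gamma)) = Aut(\Gamma)$, which supplies the second required equality directly.

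Having established both $Inn(W) = Spe(W)$ and $Aut(\Gamma) = Aut(W, F(\Gamma))$, I would conclude by Proposition \ref{castella_remark} that $W$ is strongly rigid. I do not expect any genuine obstacle in this step: the corollary is a bookkeeping combination of Theorem \ref{second_strong_rig_th}, Theorem \ref{recap_th}(a), and Proposition \ref{castella_remark}, exactly parallel to the proof of Corollary \ref{first_strong_rig_cor}, where the role played there by Theorem \ref{first_strong_rig_th} is now played by Theorem \ref{second_strong_rig_th}.
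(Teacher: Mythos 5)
Your proposal is correct and follows exactly the paper's own argument, which cites Proposition \ref{castella_remark}, Theorem \ref{recap_th} and Theorem \ref{second_strong_rig_th} and declares the conclusion immediate. You have simply spelled out the bookkeeping that the paper leaves implicit, and your parallel with Corollary \ref{first_strong_rig_cor} is also how the paper structures these two corollaries.
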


	\begin{proof} Immediate from Proposition \ref{castella_remark}, Theorem \ref{recap_th} and Theorem \ref{second_strong_rig_th}.
\end{proof}

	We will refer to groups satisfying the conditions of Corollary \ref{second_strong_rig_cor} as {\em centered} right-angled Coxeter groups (centered because of the $s$ and $s'$). %We conclude this section with a remark on the high level of homogeneity of right-angled Coxeter groups of finite rank.
	
	%\begin{theorem} For every strongly rigid finitely generated right-angled Coxeter group $W$ there exists a finitely generated right-angled Coxeter group $W^*$ such that the following hold:
%\begin{enumerate}[(a)]
%\item $W$ is a parabolic subgroup of $W^*$;
%\item for every strongly rigid parabolic subgroup $W'$ of $W$ and for every $\alpha \in Aut(W')$, there exists $\alpha^* \in Aut(W^*)$ extending $\alpha$.
%\end{enumerate}
%\end{theorem}

	%\begin{proof} This is an immediate consequence of Theorem \ref{recap_th} and Hrushovski's well-known result on homogeneity of finite graphs \cite{hrushovski}.
%\end{proof}

\section{Random Right-Angled Coxeter Groups}

	Let $T_{rg}$ be the first-order theory of random graphs, and $T_{racg}$ be $Th(A)$ for $A$ {\em any} right-angled Coxeter group such that $\Gamma_A \models T_{rg}$. This does not depend on $A$, since for every right-angled Coxeter groups $B$ and $C$ such that $\Gamma_B, \Gamma_C \models T_{rg}$ the two groups $B$ and $C$ are elementary equivalent. This can be seen using e.g. the Ehrenfeucht-Fra\"iss\'e game $EF_{\omega}(B, C)$ of length $\omega$ (this definitely suffices, since it shows that $B$ and $C$ are elementary equivalent in the infinitary logic $L_{\infty, \omega}$). We sketch the idea. If in the game $EF_{\omega}(B, C)$ Player I plays an element $b_0 \in B$ with normal form $s^0_1 \cdots s^0_n$, then Player II plays the element $c_0 = t^0_1 \cdots t^0_n$, for $t^0_1 \cdots t^0_n$ the answer of Player II to the move $s^0_1 \cdots s^0_n$ of Player I in the game $EF_{\omega}(\Gamma_B, \Gamma_C)$, in which, as well-known, Player II has a winning strategy, since $\Gamma_B, \Gamma_C \models T_{rg}$. (Notice that in a game of length $\omega$ playing elements or tuples does not matter.) The other moves are played in the same fashion. 
	
	We now fix a cardinal $\kappa > \omega$ such that $\kappa^{<\kappa} = \kappa$ and code models $A$ of cardinalilty $\kappa$ in a universal countable language $L^*$ (countably many relation symbols for any arity) as elements $\eta(A)$ of $2^{\kappa}$ in the usual fashion (see e.g. \cite{Fr&Hy&Ku}). Given a complete first-order theory $T$ in the language $L^*$, we define the isomorphism relation $\cong_T$ on $2^{\kappa} \times 2^{\kappa}$ as the relation 
	$$ \{ (\eta(A), \eta(B)) \in 2^{\kappa} \times 2^{\kappa} : A, B \models T, A \cong B \} 		  \cup \{ (\eta(A), \eta(B)) \in 2^{\kappa} \times 2^{\kappa} : A, B \not\models T \}.$$ 
Finally, given two complete first-order theories $T_0$ and $T_1$ in the language $L^*$ we can say that the isomorphism relation of $T_0$ reduces to the isomorphism relation of $T_1$, denoted as $\cong_{T_0} \; \leq_B \; \cong_{T_1}$, if the relation $\cong_{T_0}$ is Borel reducible to $\cong_{T_1}$ in the usual sense of generalized descriptive set theory (cfr. e.g. \cite{Fr&Hy&Ku}).
Clearly any (complete) countable first-order theory can be thought canonically as a (complete) theory in the language $L^*$ (in particular $T_{rg}$ and $T_{racg}$ can be thought so). We denote by $\cong_{RACG}$ the isomorphism relation $\cong_{T_{racg}}$. Given a graph $\Gamma =( V, E)$ and $X \subseteq V$ we say that $V$ is a clique (resp. an indepenent set) if for every $x \neq y \in X$ we have $xEy$ (resp. $x$ is not adjacent to $y$).

	\begin{theorem}\label{red_to_random_graphs} For any countable complete first-order theory $T$, $$\cong_T \; \leq_B \; \cong_{T_{rg}}.$$
\end{theorem}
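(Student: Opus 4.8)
The plan is to exhibit a single $\kappa$-Borel function $F \colon 2^{\kappa} \to 2^{\kappa}$ witnessing $\cong_T \; \leq_B \; \cong_{T_{rg}}$, namely a map of the form $\eta(A) \mapsto \eta(G(A))$ sending each coded $L^*$-structure $A$ of cardinality $\kappa$ to the code of a graph $G(A)$, so that $A \models T$ implies $G(A) \models T_{rg}$, that $A \not\models T$ implies $G(A) \not\models T_{rg}$, and that on models of $T$ we have $A \cong A' \Leftrightarrow G(A) \cong G(A')$. This is exactly what the definitions of $\cong_T$ and $\cong_{T_{rg}}$ demand, since both relations collapse all of their non-models into a single class. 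We may assume $T$ has infinite models, as otherwise $\cong_T$ has only countably many classes and reduces to everything. The predicate ``$A \models T$'' is $\kappa$-Borel: each first-order axiom, evaluated over a fixed universe of size $\kappa$, unwinds into a $\kappa$-Borel subset of $2^{\kappa}$ (using $\kappa^{<\kappa} = \kappa$), and $T$ is a countable intersection of these. Thus I would Borel-split the domain and send every non-model of $T$ to a fixed graph failing an extension axiom (hence $\not\models T_{rg}$), concentrating the real work on coding models of $T$.

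To code a model $A$ in the relational language $L^* = \{ R_n : n < \omega \}$ (with $R_n$ of arity $k_n$), I would first build a \emph{core} graph $\mathrm{core}(A)$ by a Friedman--Stanley-style gadget construction: the elements of $A$ become vertices, and to each instance $\bar a \in R_n^{A}$ one attaches a finite \emph{rigid, asymmetric} gadget joined in an ordered way to the vertices coding the entries of $\bar a$, the shape of the gadget recording both the index $n$ and the position of each coordinate. Standard arguments then give $A \cong A' \Leftrightarrow \mathrm{core}(A) \cong \mathrm{core}(A')$, with $\eta(A) \mapsto \eta(\mathrm{core}(A))$ being $\kappa$-Borel. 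The second step is to enlarge $\mathrm{core}(A)$ into a model of $T_{rg}$: by a bookkeeping of length $\kappa$ one adds witness vertices so that for every pair of disjoint finite sets $U, W$ of vertices there is a vertex adjacent to exactly the vertices of $U$ and to none of $W$, yielding $G(A) \models T_{rg}$. Since $\kappa^{<\kappa} = \kappa$ there are only $\kappa$ such requirements, and the whole construction remains $\kappa$-Borel in $\eta(A)$.

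The remaining, and to my mind hardest, point is iso-faithfulness, specifically the direction $G(A) \cong G(A') \Rightarrow A \cong A'$. Here one must recover the coded core \emph{invariantly} inside $G(A)$, so that any isomorphism of the ambient random graphs restricts to an isomorphism of the cores and hence induces one of $A$ and $A'$. The danger is that, unlike the countable random graph, uncountable models of $T_{rg}$ are highly non-homogeneous, and the witness vertices thrown in to force the extension axioms could in principle be confused with coding vertices, or generate spurious automorphisms collapsing non-isomorphic $A$. The way I would control this is to keep the coding gadgets rigid and to equip the coding vertices with a distinguishing local configuration that the generically added witnesses provably avoid, so that ``being a core vertex'' and the core adjacency become $L_{\infty \kappa}$-definable invariants of $G(A)$; a back-and-forth argument of the kind sketched for $EF_{\omega}(B,C)$ earlier then shows that isomorphisms of the $G(A)$ respect the core. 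I expect this separation of the coding skeleton from the random filler to be the crux of the proof. Alternatively, and more cheaply, one may invoke the fact that in generalized descriptive set theory the isomorphism relation of an unstable --- indeed independence-property --- theory is $\leq_B$-maximal among isomorphism relations of countable complete theories \cite{Fr&Hy&Ku}; as $T_{rg}$ is the prototypical such theory, this yields $\cong_T \; \leq_B \; \cong_{T_{rg}}$ at once.
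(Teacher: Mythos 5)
Your outline follows the same route as the paper --- code the structure into a graph, pad that graph out to a model of $T_{rg}$, and argue that the coded core is invariantly recoverable inside the resulting random graph --- but the step you yourself flag as the crux is precisely the step you leave unproved, and it is the only step requiring an actual idea. Saying that you would ``equip the coding vertices with a distinguishing local configuration that the generically added witnesses provably avoid'' states the goal rather than giving a construction: no such configuration is exhibited, so the direction $G(A) \cong G(A') \Rightarrow A \cong A'$ is not established. The paper supplies a concrete device. Taking the reduction of an arbitrary countable complete $T$ to graphs as folklore, it starts from a graph $\Gamma = (V,E)$, forms the incidence graph on $V \cup E$, and attaches to each $a \in V$ a clique $K_a$ of size $\omega_1$ with $K_a - N(a)$ countable, while each edge $\{a,b\}$ gets a clique $K_{a,b}$ of size $\omega_1$ split by $N(\{a,b\})$ into two uncountable pieces. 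The subsequent closure to a random graph only ever adds witnesses whose prescribed neighbourhoods are \emph{finite} sets of earlier vertices, so any clique consisting of added witnesses is countable; hence the uncountable cliques live (up to countable error) inside the original gadgets, and vertex-vertices are distinguished from edge-vertices and from filler by the cardinality of their trace on these cliques. That cardinality bookkeeping is exactly what makes the core recoverable from the isomorphism type of $R_\Gamma$, and it is absent from your write-up.

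Your fallback --- citing \cite{Fr&Hy&Ku} for the claim that the isomorphism relation of an unstable (or independence-property) theory is $\leq_B$-maximal among isomorphism relations of countable complete theories --- does not close the gap either. That memoir establishes reductions of club-type equivalence relations \emph{into} $\cong_T$ for unstable $T$ (non-Borelness results), and typically under additional cardinal-arithmetic hypotheses; the $\Sigma^1_1$-completeness/maximality statement you want is not proved there at the level of generality of the present hypothesis $\kappa^{<\kappa} = \kappa > \omega$. So the ``cheap'' route is not available as cited, and the gadget construction has to be carried out.
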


	\begin{proof} This is folklore, we sketch a proof for completeness of exposition. As well-known, it suffices to do the following: for every graph $\Gamma$ of power $\kappa$ we define a random graph $R_{\Gamma}$ of power $\kappa$ such that $\Gamma \cong \Gamma'$ iff $R_{\Gamma} \cong R_{\Gamma'}$. We do this. Let $\Gamma = (V, E)$ be a graph of power $\kappa$ with $V \cap E = \emptyset$ (without loss of generality). Define a graph
$R^0_{\Gamma}$ on $V \cup E$ by letting $a$ and $b$ be adjacent to $\{a, b\}$, for every $\{ a, b\} \in E$.  Now, for every $a \in V$ add a clique $K_a$ of size $\omega_1$ such that $a$ is adjacent to co-countably many $x \in K_a$, i.e. $K_a - N(a)$ has size $\omega$. Similarly, for every $\{a,b\} \in E$ add a clique of size $\omega_1$ such that $K_{a,b} \cap N(\{a,b\})$ and $K_{a,b} - N(\{a,b\})$ have both size $\omega_1$. Let $R^1_{\Gamma}(0)$ be the resulting graph, and define $R^2_{\Gamma}(i+1)$ by closing $R^2_{\Gamma}(i)$ under the following condition: for every finite $X$ there exists $a_X$ such that $N(a_X) = X$. Then $\bigcup_{i < \omega} R^2_{\Gamma}(i) = R_{\Gamma} \models T_{rg}$ is as wanted.
\end{proof}
	
	\begin{theorem} For any countable complete first-order theory $T$, $$\cong_T \; \leq_B \; \cong_{RACG}.$$
\end{theorem}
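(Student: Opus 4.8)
The plan is to exploit transitivity of $\leq_B$. By Theorem \ref{red_to_random_graphs} we already have $\cong_T \; \leq_B \; \cong_{T_{rg}}$ for every countable complete $T$, so it suffices to produce a single $\kappa$-Borel reduction
$$\cong_{T_{rg}} \; \leq_B \; \cong_{RACG}.$$
This reduction is induced by the assignment $\Gamma \mapsto W_{\Gamma}$ sending a graph on vertex set $S$ to the right-angled Coxeter group whose Coxeter graph is $\Gamma$ (i.e. $m(s,s') = 2$ when $sEs'$ and $m(s,s') = \infty$ otherwise). Since $\card{S} = \kappa$, realising $W_{\Gamma}$ as a set of reduced words over $S$ shows $\card{W_{\Gamma}} = \card{\kappa^{<\omega}} = \kappa$, so $W_\Gamma$ lives in the coding space.

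First I would make the map total and correct on the non-model part. Define $f\colon 2^{\kappa} \to 2^{\kappa}$ by letting $f(\eta)$ code $W_{\Gamma_{\eta}}$ whenever the structure $\Gamma_\eta$ decoded from $\eta$ satisfies $T_{rg}$, and $f(\eta) = \eta_0$ otherwise, where $\eta_0$ is a fixed code of a structure that is not a model of $T_{racg}$. When $\Gamma_\eta \models T_{rg}$, the group $W_{\Gamma_\eta}$ is a right-angled Coxeter group with random Coxeter graph, so by the very definition of $T_{racg}$ and the elementary-equivalence remark opening this section we have $W_{\Gamma_\eta} \models T_{racg}$. Hence: if $\eta,\eta'$ both code random graphs then $f(\eta),f(\eta')$ both code models of $T_{racg}$; otherwise at least one of $f(\eta),f(\eta')$ is the fixed non-model. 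Matching this against the definitions of $\cong_{T_{rg}}$ and $\cong_{RACG}$ (both of which collapse all non-models to a single class), $f$ is a reduction provided that, for random $\Gamma,\Gamma'$,
$$\Gamma \cong \Gamma' \iff W_{\Gamma} \cong W_{\Gamma'}.$$

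This equivalence is the conceptual heart, and it is where the group theory enters. The forward implication is mere functoriality: a graph isomorphism $\Gamma \to \Gamma'$ matches the two Coxeter presentations and so extends to a group isomorphism. The backward implication is exactly Castella's rigidity theorem for right-angled Coxeter groups (\cite{castella}): given a group isomorphism $\varphi\colon W_\Gamma \to W_{\Gamma'}$, the image $\varphi(S)$ of the standard Coxeter basis $S$ of $W_\Gamma$ is a Coxeter basis of $W_{\Gamma'}$, so by rigidity there is $\alpha \in Aut(W_{\Gamma'})$ with $\alpha(\varphi(S)) = S'$, the standard basis of $W_{\Gamma'}$; then $\alpha \circ \varphi$ maps $S$ onto $S'$ as Coxeter bases and thus restricts to an isomorphism of the labelled Coxeter graphs $\Gamma \to \Gamma'$.

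It remains to verify that $f$ is $\kappa$-Borel, which I expect to be the only genuinely technical point. The test ``$\Gamma_\eta \models T_{rg}$'' is a conjunction over the countably many axioms of $T_{rg}$, each a first-order sentence whose satisfaction over a $\kappa$-sized universe is a $\kappa$-Borel condition, so the domain splits into two $\kappa$-Borel pieces on which $f$ is defined separately. On the random piece the point is that the entire group structure of $W_\Gamma$ is computed from $\Gamma$ by the local moves $(M_1)$ and $(M_2)$: by Tits' solution of the word problem (Theorem \ref{normal_form}) and the normal-form criterion of Proposition \ref{bark_prop_2}, whether a word is reduced, and which reduced word a product returns, depend only on the edges of $\Gamma$ among the finitely many letters actually occurring. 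Fixing a Borel bijection $\kappa^{<\omega} \cong \kappa$ and taking as the universe of $W_\Gamma$ the codes of the shortlex normal forms (a $\kappa$-Borel subset of $\kappa$ of size $\kappa$, which by the standard re-indexing of \cite{Fr&Hy&Ku} we may treat as the domain), every bit of the code $f(\eta)$ — membership in the universe, the value of a product, the identity — is determined by finitely many coordinates of $\eta$. Thus $f$ is continuous on the random piece and $\kappa$-Borel overall, so it is the desired reduction; I regard this bookkeeping about the universe and the locality of the word problem as the main obstacle, while the rigidity step supplies the substance.
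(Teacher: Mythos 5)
Your proposal is correct and follows essentially the same route as the paper: reduce to the case $T = T_{rg}$ via Theorem \ref{red_to_random_graphs} and then send a random graph $\Gamma$ to the right-angled Coxeter group of type $\Gamma$ (and non-models to a fixed non-model of $T_{racg}$). The paper dismisses the verification as immediate, whereas you spell out the two points it leaves implicit --- that Castella's rigidity gives $W_{\Gamma} \cong W_{\Gamma'} \Rightarrow \Gamma \cong \Gamma'$, and that the coding map is $\kappa$-Borel --- both of which you handle correctly.
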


	\begin{proof} Because of Theorem \ref{red_to_random_graphs}, it suffices to show that $\cong_T \; \leq_B \; \cong_{RACG}$ for $T = T_{rg}$ the theory of random graphs. But this is immediate since we can define $F: 2^{\kappa} \rightarrow 2^{\kappa}$ by setting
	$$F(\eta(\Gamma)) = \begin{cases} 
	 \eta(A) 	      \;\;\; \text{ if }  \Gamma \not\models T_{rg} \\
	 \eta(A_{\Gamma}) \; \text{ if }  \Gamma \models T_{rg},
\end{cases}$$
where in the first clause $A$ denotes any fixed right-angled Coxeter group $A$ such that $A \not\models T_{racg}$, and in the second clause $A_{\Gamma}$ is the right-angled Coxeter group of type $\Gamma$. The function $F$ is evidently Borel.
\end{proof}

	The following result shows that no non-trivial class of right-angled Coxeter groups can be treated from the perspective of first-order model theory. This motivates our use of abstract elementary classes.

	\begin{theorem} Let $\mathbf{K}$ be a class of right-angled Coxeter groups such that there exists $A \in \mathbf{K}$ with $\Gamma_A$ containing two non-adjacent vertices $a$ and $b$. Then $\mathbf{K}$ is not first-order axiomatizable.
\end{theorem}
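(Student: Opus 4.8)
The plan is to use the standard fact that a first-order axiomatizable class is closed under elementary equivalence, and to exploit the infinite-order element produced by the two non-adjacent vertices to build, via an ultrapower, an elementarily equivalent group that fails to be residually finite and hence cannot be a right-angled Coxeter group. Assume toward a contradiction that $\mathbf{K}$ is exactly the class $\mathrm{Mod}(T)$ of models of some first-order theory $T$. Since $a$ and $b$ are non-adjacent in $\Gamma_A$, the induced subgraph on $\{a,b\}$ is edgeless, so the parabolic $A_{\{a,b\}} = \langle a,b\rangle_A$ is the right-angled Coxeter group on two non-adjacent vertices, i.e. $A_{\{a,b\}} \cong \mathbb{Z}/2 \ast \mathbb{Z}/2 \cong D_{\infty}$; in particular $t := ab$ has infinite order in $A$. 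Because $A \in \mathbf{K}$ we have $A \models T$, and because $\mathrm{Mod}(T)$ is closed under elementary equivalence it suffices to exhibit a group $B \equiv A$ that is not (isomorphic to) a right-angled Coxeter group: then $B \models T$, so $B \in \mathbf{K}$, contradicting that every member of $\mathbf{K}$ is a right-angled Coxeter group.

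Fix a non-principal ultrafilter $\mathcal{U}$ on $\omega$ and set $B := A^{\omega}/\mathcal{U}$, so that $B \equiv A$ by {\L}o\'s's theorem. Consider the element $\xi := [(t^{n!})_{n<\omega}]_{\mathcal{U}} \in B$. Since $t$ has infinite order, $t^{n!} \neq e$ for every $n \geq 1$, whence $\xi \neq e$. Moreover, for each fixed $k \geq 1$ and each $n \geq k$ we have $k \mid n!$, so $t^{n!} = (t^{n!/k})^{k}$; as $\{n : n \geq k\}$ is cofinite and thus belongs to $\mathcal{U}$, the element $y_{k} := [(t^{n!/k})_{n}]_{\mathcal{U}}$ satisfies $y_{k}^{\,k} = \xi$. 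Thus $\xi$ is a nontrivial element of $B$ that admits a $k$-th root for every $k \geq 1$.

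It remains to observe that no such element can live in a right-angled Coxeter group, since every right-angled Coxeter group is residually finite. For arbitrary rank this follows by a support argument: given $e \neq g \in W_{\Gamma}$, its support $sp(g) =: S_{0}$ is finite, and the map $\pi\colon W_{\Gamma} \to W_{S_{0}}$ fixing $S_{0}$ pointwise and sending every other generator to $e$ is a well-defined retraction (it respects $s^{2}=e$ and the commutation relations of $\Gamma$), so $\pi(g) = g \neq e$ lies in the finitely generated, hence linear, hence residually finite (Malcev) Coxeter group $W_{S_{0}}$. Now if $G$ is residually finite and $\xi \neq e$ admits a $k$-th root for every $k$, choose a homomorphism $q\colon G \to F$ onto a finite group with $q(\xi) \neq e$; taking $k = |F|$ and writing $\xi = y^{k}$ yields $q(\xi) = q(y)^{|F|} = e$, a contradiction. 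Hence $B$ is not residually finite and therefore not a right-angled Coxeter group, which completes the reduction and the proof.

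The ultrapower construction and the verification that $\xi$ is a universally-divisible nontrivial element are routine; the step carrying the real content, and the main obstacle, is the structural input that \emph{every} right-angled Coxeter group (of arbitrary rank) is residually finite and therefore admits no nontrivial element with roots of all orders. I would be most careful in justifying residual finiteness in the infinite-rank case, which is exactly where the finite-support retraction $\pi$ onto a finitely generated parabolic is doing the essential work.
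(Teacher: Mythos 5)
Your proof is correct and follows essentially the same route as the paper's: both pass to a non-principal ultrapower of $A$ and use the infinite-order element $ab$ to produce a nontrivial element divisible by every positive integer, which cannot exist in a right-angled Coxeter group. The only difference is that the paper leaves the last impossibility unjustified, whereas you supply a complete argument via residual finiteness (retraction onto the finite-support parabolic plus Malcev), which is a welcome addition rather than a deviation.
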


	\begin{proof} Let $A$, $a$ and $b$ be as in the statement of the theorem. Then for every positive integer $n$ the element $c_n = (ab)^n \in A$ is divisible by $n$. It follows that in the ultrapower $\prod_{i < \omega} A_i/U$ ($U$ non-principal ultrafilter) there exists a divisible element $c$ (i.e. an element divisible by every positive integer $n$), but a Coxeter group can not contain such an element $c$. Thus, $\prod_{i < \omega} A_i/U \not\in \mathbf{K}$ (and so $\mathbf{K}$ is not first-order).
\end{proof}

\section{Abstract Elementary Classes}

	In this section we introduce the basics of abstract elementary classes (see e.g. \cite{shelah_abstr_ele_cla} and \cite{jarden}). This machinery will be used in later sections in order to study various classes of right-angled Coxeter groups. As usual in this context, type means Galois type (cf. e.g. \cite[Definition 8.10]{baldwin}). Given a class $\mathbf{K}$ of structures in the vocabulary $L$, we denote by $\leq$ the $L$-submodel relation on structures in $\mathbf{K}$.

\begin{definition}[Abstract Elementary Class \cite{shelah_abstr_ele_cla}]\label{def_AEC}  Let $\mathbf{K}$ be a class of structures in the vocabulary $L$. We say that $(\mathbf{K}, \preccurlyeq)$ is an {\em abstract elementary class} ($\mathrm{AEC}$) if the following conditions are satisfied.
		\begin{enumerate}[(1)]
			\item $\mathbf{K}$ and $\preccurlyeq$ are closed under isomorphism.
			\item If ${A} \preccurlyeq {B}$, then ${A}$ is a substructure of ${B}$ (${A} \leq {B}$).
			\item The relation $\preccurlyeq$ is a partial order on $\mathbf{K}$.
			\item If $({A}_i)_{i < \delta}$ is an increasing continuous $\preccurlyeq$-chain, then:
			\begin{enumerate}[({4.}1)]
				\item $\bigcup_{i < \delta} {A}_i \in \mathbf{K}$;
				\item for each $j < \delta$, ${A}_j \preccurlyeq \bigcup_{i < \delta} {A}_i$;
				\item if each ${A}_j \preccurlyeq {B}$, then $\bigcup_{i < \delta} {A}_i \preccurlyeq {B}$ \; (Smoothness Axiom).
	\end{enumerate}
			\item If ${A}, {B}, {C} \in \mathbf{K}$, ${A} \preccurlyeq {C}$, ${B} \preccurlyeq {C}$ and ${A} \leq {B}$, then ${A} \preccurlyeq {B}$ \; (Coherence Axiom).
			\item There is a L\"owenheim-Skolem number $\mathrm{LS}(\mathbf{K}, \preccurlyeq)$ such that if ${A} \in \mathbf{K}$ and $B \subseteq A$, then there is ${C} \in \mathbf{K}$ such that $B \subseteq C$, ${C} \preccurlyeq {A}$ and $|C| \leq |B| + |L| + \mathrm{LS}(\mathbf{K}, \preccurlyeq)$ \; (Existence of LS-number).	
\end{enumerate}		
\end{definition}

	\begin{definition} If ${A}, {B} \in \mathbf{K}$ and $f: {A} \rightarrow {B}$ is an embedding such that $f({A}) \preccurlyeq {B}$, then we say that $f$ is a $\preccurlyeq$-embedding.
	
\end{definition}
	
	Let $\lambda$ be a cardinal. We let $\mathbf{K}_{\lambda} = \left\{ {A} \in \mathbf{K} \; | \; |A| = \lambda \right\}$.

	\begin{definition}\label{def_AP} Let $(\mathbf{K}, \preccurlyeq)$ be an $\mathrm{AEC}$. 
		\begin{enumerate}[(1)]
			\item We say that $(\mathbf{K}, \preccurlyeq)$ has the {\em amalgamation property} $(\mathrm{AP})$ if for any ${A}, {B}_0, {B}_1 \in \mathbf{K}$ with ${A} \preccurlyeq {B}_i$, for $i < 2$, there are ${C} \in \mathbf{K}$ and $\preccurlyeq$-embeddings $f_i: {B}_i \rightarrow {C}$, for $i < 2$, such that $f_0 \restriction A = f_1 \restriction A$.
			%\item We say that $(\mathbf{K}, \preccurlyeq)$ has the {\em disjoint amalgamation property} $(\mathrm{DAP})$ if the condition in (i) is satisfied and in addition $f_0(B_0) \cap f_1(B_1) = A$.
			\item We say that $(\mathbf{K}, \preccurlyeq)$ has the {\em joint embedding property} $(\mathrm{JEP})$ if for any ${B}_0, {B}_1 \in \mathbf{K}$ there are ${C} \in \mathbf{K}$ and $\preccurlyeq$-embeddings $f_i: {B}_i \rightarrow {C}$, for $i < 2$.
			\item We say that $(\mathbf{K}, \preccurlyeq)$ has {\em arbitrarily large models} $(\mathrm{ALM})$ if for every $\lambda \geq \!\mathrm{LS}(\mathbf{K}, \!\preccurlyeq)$, $\mathbf{K}_{\lambda} \neq \emptyset$.
\end{enumerate}
\end{definition}

	As well-known, given an $\mathrm{AEC}$, say $(\mathbf{K}, \preccurlyeq)$, with $\mathrm{AP}$, $\mathrm{JEP}$ and $\mathrm{ALM}$, we can construct a monster model $\mathfrak{M} = \mathfrak{M}(\mathbf{K}, \preccurlyeq)$ for $(\mathbf{K}, \preccurlyeq)$, i.e. a $\kappa$-model homogeneous and $\kappa$-universal (for $\kappa$ large enough) structure in $\mathbf{K}$. We say that a subset $A$ of $\mathfrak{M}$ is bounded if its cardinality is smaller than $\kappa$. Given bounded $A \subseteq \mathfrak{M}$ and $n < \omega$, we denote by $S_n(A)$ the set of Galois types\footnote{For a definition of Galois type see e.g. \cite[beginning of Section 4]{kueker}.} over $A$ of length $n$, and by $S(A)$ the set $\bigcup_{n < \omega} S_n(A)$.
	
		\begin{definition} Let $(\mathbf{K}, \preccurlyeq)$ be an $\mathrm{AEC}$ with $\mathrm{AP}$, $\mathrm{JEP}$ and $\mathrm{ALM}$. We say that $(\mathbf{K}, \preccurlyeq)$ has the {\em independence property} if there exists finite $A \subseteq \mathfrak{M}$ and $P \subseteq S(A)$ such that for every ordinal $\alpha < |\mathfrak{M}|$ there exist $(a_i)_{i < \alpha} \in \mathfrak{M}$ such that for every $X \subseteq \alpha$ there exists $b_X \in \mathfrak{M}$ such that $tp(b_Xa_i/A) \in P$ if and only if $i \in X$.
\end{definition}

	\begin{definition} Let $(\mathbf{K}, \preccurlyeq)$ be an $\mathrm{AEC}$ with $\mathrm{AP}$, $\mathrm{JEP}$ and $\mathrm{ALM}$. We say that $(\mathbf{K}, \preccurlyeq)$ is {\em homogeneous} if for every ordinal $\alpha < |\mathfrak{M}|$ and $(a_i)_{i< \alpha}, (b_i)_{i< \alpha} \in \mathfrak{M}$, if $tp(a_X) = tp(b_X)$ for every $X \subseteq_{fin} \alpha$, then $tp((a_i)_{i< \alpha}) = tp((b_i)_{i< \alpha})$.
\end{definition}

	\begin{definition}[\cite{kesala} and \cite{kueker}] Let $(\mathbf{K}, \preccurlyeq)$ be an $\mathrm{AEC}$. We say that $(\mathbf{K}, \preccurlyeq)$ has {\em finite character} if whenever $A \leq B$ and for every $X \subseteq_{fin} A$ there exists $\preccurlyeq$-embedding $f_X: A \rightarrow B$ such that $f \restriction X = id_X$, then $A \preccurlyeq B$.
\end{definition}

\begin{definition}[\cite{kesala}] Let $(\mathbf{K}, \preccurlyeq)$ be an $\mathrm{AEC}$. We say that $(\mathbf{K}, \preccurlyeq)$ is {\em finitary} if the following are satisfied:
\begin{enumerate}[(1)]
\item $\mathrm{LS}(\mathbf{K}, \preccurlyeq) = \omega$;
\item $(\mathbf{K}, \preccurlyeq)$ has arbitrarily large models;
\item $(\mathbf{K}, \preccurlyeq)$ has the amalgamation property;
\item $(\mathbf{K}, \preccurlyeq)$ has the joint embedding property;
\item $(\mathbf{K}, \preccurlyeq)$ has finite character.
\end{enumerate}
\end{definition}

	\begin{definition} Let $(\mathbf{K}, \preccurlyeq)$ be an $\mathrm{AEC}$ with $\mathrm{AP}$, $\mathrm{JEP}$ and $\mathrm{ALM}$. For $\mathrm{LS}(\mathbf{K} ,\preccurlyeq~) \leq \kappa \leq \lambda$, we say that $(\mathbf{K}, \preccurlyeq)$ is {\em $(\kappa, \lambda)$-tame} if for every $B \in \mathbf{K}$ of power $\lambda$ and $a, b \in \mathfrak{M}^{< \omega}$, if $tp(a/B) \neq tp(b/B)$, then there is $A \preccurlyeq B$ of power $\kappa$ such that $tp(a/A) \neq tp(b/A)$.  We say that $(\mathbf{K}, \preccurlyeq)$ is {\em tame} if it is $(\mathrm{LS}(\mathbf{K}, \preccurlyeq), \lambda)$-tame for every $\lambda \geq \mathrm{LS}(\mathbf{K}, \preccurlyeq)$.
\end{definition}

	As usual, we say that $(\mathbf{K}, \preccurlyeq)$ is {\em uncountably categorical} if for every uncountable cardinal $\kappa$ there exists only one model of power $\kappa$, up to isomorphism. In later sections we will use the following classical result on abstract elementary classes.

	\begin{theorem}[see e.g. {\cite[Theorem 8.21]{baldwin}}]\label{cate_implies_stability} Let $(\mathbf{K}, \preccurlyeq)$ be an $\mathrm{AEC}$ with $\mathrm{AP}$, $\mathrm{JEP}$ and $\mathrm{ALM}$. If $(\mathbf{K}, \preccurlyeq)$ is uncountably categorical, then $(\mathbf{K}, \preccurlyeq)$ is stable\footnote{The notion of stability in this context is the exact analogous of the notion of stability in the classical context of first-order logic, where we replace the notion of type with the notion of Galois type. For an explicit definition see e.g. \cite[Definition 8.20]{baldwin}.} in every infinite cardinality $\lambda \geq \mathrm{LS}(\mathbf{K}, \preccurlyeq)$.
\end{theorem}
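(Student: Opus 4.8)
The plan is to argue by contraposition and to reduce the global assertion to a single instance of the downward stability-transfer phenomenon, which is available as soon as one has a categoricity cardinal above the relevant Hanf number. Fix an infinite $\mu \geq \mathrm{LS}(\mathbf{K},\preccurlyeq)$ (this plays the role of $\lambda$ in the statement; I reserve $\lambda$ for a categoricity cardinal). I want to show that $(\mathbf{K},\preccurlyeq)$ is $\mu$-stable, i.e. that $|S(M)| \leq \mu$ for every $M \in \mathbf{K}_{\mu}$. Since $(\mathbf{K},\preccurlyeq)$ is categorical in every uncountable cardinal, the categoricity cardinals are cofinal in the class of all cardinals, so I may choose a single categoricity cardinal $\lambda$ with $\lambda > \mu$ and $\lambda > \beth_{(2^{\mathrm{LS}(\mathbf{K},\preccurlyeq)})^{+}}$, the relevant Hanf number. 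It then suffices to prove the local statement that categoricity in such a large $\lambda$ forces $\mu$-stability for every $\mu \in [\mathrm{LS}(\mathbf{K},\preccurlyeq),\lambda)$; running this over the various $\mu$ yields stability in every infinite cardinality $\geq \mathrm{LS}(\mathbf{K},\preccurlyeq)$, with no separate upward transfer required.

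For the local statement I would argue by contradiction, assuming $\mu$-instability, i.e. some $M \in \mathbf{K}_{\mu}$ with $|S(M)| > \mu$, and derive a contradiction with categoricity in $\lambda$. The first step is to extract from instability the \emph{Galois order property}: working inside the monster model $\mathfrak{M}$ supplied by $\mathrm{AP}$, $\mathrm{JEP}$ and $\mathrm{ALM}$, and using amalgamation crucially, a failure of stability produces a sequence of finite tuples together with a Galois type witnessing a linear order, in the manner of Shelah's instability-implies-order-property argument for $\mathrm{AECs}$ with amalgamation. The second step feeds this order property into the Ehrenfeucht--Mostowski machinery: since $(\mathbf{K},\preccurlyeq)$ has arbitrarily large models it is, by Shelah's presentation theorem, a $\mathrm{PC}$-class and so admits EM models $EM(I)$ over arbitrary linear orders $I$, carrying order-indiscernibles indexed by $I$. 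Building EM models over $2^{\lambda}$ pairwise non-isomorphic linear orders of size $\lambda$ and using the order property to read off an invariant of the order type from the isomorphism type of the model, one obtains $2^{\lambda}$ pairwise non-isomorphic members of $\mathbf{K}_{\lambda}$, contradicting categoricity in $\lambda$. This contradiction gives $\mu$-stability and closes the argument.

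The main obstacle is exactly the passage \emph{instability $\Rightarrow$ order property $\Rightarrow$ many models}, both halves of which are genuine theorems rather than routine bookkeeping. The first half is where amalgamation is indispensable: without it Galois types are too poorly behaved for instability to localize into an order, and this is precisely why the hypotheses $\mathrm{AP}$, $\mathrm{JEP}$ and $\mathrm{ALM}$ (already needed even to speak of $\mathfrak{M}$ and of $S(M)$) enter the proof. The second half rests on Shelah's presentation theorem and the nonstructure technology that converts order-indiscernibles over many order types into many non-isomorphic models; this is the technically heaviest ingredient, and I would cite it wholesale rather than reprove it, as it is the content of the cited \cite[Theorem 8.21]{baldwin} and its surrounding chapter. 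I would finally note that the reduction carried out in the first paragraph is what allows the strong hypothesis of categoricity \emph{in every} uncountable cardinal to act cleanly: it guarantees a categoricity cardinal simultaneously above the Hanf number and above any prescribed $\mu$, so that a purely downward transfer already yields stability in all infinite cardinalities $\geq \mathrm{LS}(\mathbf{K},\preccurlyeq)$.
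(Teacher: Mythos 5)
The paper offers no argument for this statement beyond the citation: the result is exactly Baldwin's (i.e.\ Shelah's) downward stability transfer from a categoricity cardinal, and the only step the paper implicitly supplies is the one in your first paragraph --- categoricity in every uncountable cardinal places a categoricity cardinal $\lambda$ above any prescribed $\mu \geq \mathrm{LS}(\mathbf{K},\preccurlyeq)$, so a purely downward transfer already gives stability in every infinite cardinality. That reduction is correct and is all the paper intends.

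The difficulty is your second paragraph, which both misdescribes the content of the cited theorem and leans on an implication that is not available at this level of generality. The proof of the cited transfer is not the chain ``instability $\Rightarrow$ Galois order property $\Rightarrow$ $2^{\lambda}$ models in $\lambda$''; it is a direct Ehrenfeucht--Mostowski argument: using the presentation theorem and $\mathrm{ALM}$ one forms the EM model over the \emph{well-ordering} $\lambda$ and shows, from indiscernibility and the well-foundedness of the index set, that it realizes at most $\mu + \mathrm{LS}(\mathbf{K},\preccurlyeq)$ Galois types over any submodel of cardinality $\mu < \lambda$; a failure of $\mu$-stability is then transported via $\mathrm{AP}$ into some model of cardinality $\lambda$, which by categoricity is that EM model --- contradiction. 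No Hanf-number threshold on $\lambda$ and no order property are needed. By contrast, the first arrow of your chain, that Galois instability in a single cardinal yields the order property with arbitrarily long orders, is a genuine gap: Galois types carry no syntax, the first-order type-counting argument does not transfer, and this implication is only known under extra hypotheses such as tameness --- which is precisely a property the paper is trying to \emph{establish} for its classes and so cannot be presupposed here. The second arrow (order property $\Rightarrow$ many models) is real but heavy nonstructure machinery invoked for a purpose it is not needed for. Either supply the missing implication or, better, replace your second paragraph with the EM-model argument, which is what the citation actually refers to.
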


	We will also use the following results connecting finitary abstract elementary classes with infinitary logic. Given $\theta \in L_{\infty, \omega}$, we let $Mod(\theta) = \{ A : A \models \theta \}$. 
	
	\begin{theorem}[Kueker {\cite[Theorem 3.10]{kueker}}]\label{kueker1} Let $(\mathbf{K}, \preccurlyeq)$ be a finitary $\mathrm{AEC}$ with countable vocabulary. If $\mathbf{K}$ contains at most $\lambda$ models of cardinality $\lambda$ for some infinite $\lambda$, then $\mathbf{K} = Mod(\theta)$ for some $\theta \in L_{\infty, \omega}$. If in addition $\mathbf{K}$ contains at most $\lambda$ models of cardinality $< \lambda$, then we can find $\theta \in L_{\lambda^+, \omega}$.
\end{theorem}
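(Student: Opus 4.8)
This is Kueker's theorem, quoted from \cite{kueker}, so we only indicate the strategy one would follow to prove it. The plan is to exhibit $\mathbf{K}$ as $Mod(\theta)$ for a single disjunction of Scott sentences, and the entire difficulty is concentrated in two points: that $\mathbf{K}$ is closed under $L_{\infty,\omega}$-equivalence, and that it meets only few $L_{\infty,\omega}$-equivalence classes. Recall first the two classical inputs from infinitary logic: by Scott's theorem every structure $A$ has a \emph{Scott sentence} $\sigma_A$ with $B \models \sigma_A$ if and only if $B \equiv_{\infty,\omega} A$, and by Karp's theorem $B \equiv_{\infty,\omega} A$ is equivalent to the existence of a nonempty back-and-forth system of finite partial isomorphisms between $A$ and $B$; moreover, if $|A| \leq \lambda$ then $\sigma_A$ can be chosen in $L_{\lambda^+,\omega}$.

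The technical heart, and the only place where \emph{finitarity} (as opposed to being a bare $\mathrm{AEC}$) is used, is the elementarity statement $(\ast)$: if $A \preccurlyeq B$ then $A \prec_{\infty,\omega} B$, equivalently, over strong submodels Galois types and $L_{\infty,\omega}$-types coincide. I would prove $(\ast)$ from the defining properties of a finitary class: finite character reduces the comparison of Galois types to their restrictions to finite tuples, and amalgamation, joint embedding and $\mathrm{LS}(\mathbf{K},\preccurlyeq)=\omega$ allow one to assemble, by a back-and-forth over finite tuples, the full $L_{\infty,\omega}$-elementarity. Granting $(\ast)$, closure of $\mathbf{K}$ under $\equiv_{\infty,\omega}$ follows: given $A\in\mathbf{K}$ and $B \equiv_{\infty,\omega} A$, fix a Karp back-and-forth system and, guided by it and by the downward Löwenheim–Skolem property applied \emph{inside} $A$, build an increasing continuous $\preccurlyeq$-chain $(B_i)_{i<|B|}$ in $\mathbf{K}$, each $B_i$ isomorphic to a strong submodel of $A$, whose union is isomorphic to $B$; the chain axiom (4.1) of Definition \ref{def_AEC} then gives $\bigcup_i B_i \in \mathbf{K}$, and closure under isomorphism yields $B \in \mathbf{K}$.

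It remains to bound the number of $\equiv_{\infty,\omega}$-classes met by $\mathbf{K}$, and this is where the counting hypothesis enters. Given any $C \in \mathbf{K}$, one uses $\mathrm{JEP}$, $\mathrm{ALM}$ and amalgamation to realize $C$ as a strong submodel of some $D \in \mathbf{K}$ with $|D| = \lambda$, whence $C \equiv_{\infty,\omega} D$ by $(\ast)$; since there are at most $\lambda$ models of cardinality $\lambda$, there are at most $\lambda$ classes in play. Choosing a representative $A_i$ of each class, $\theta := \bigvee_{i<\lambda} \sigma_{A_i}$ is a legitimate $L_{\infty,\omega}$-sentence with $\mathbf{K} = Mod(\theta)$, which proves the first assertion. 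For the second, the extra hypothesis bounds the models of cardinality $<\lambda$ by $\lambda$ as well, so every class has a representative of cardinality $\leq\lambda$: pass from a size-$\lambda$ representative to a strong submodel of size $\leq\lambda$ by downward Löwenheim–Skolem and apply $(\ast)$. Then each $\sigma_{A_i} \in L_{\lambda^+,\omega}$ and the disjunction has at most $\lambda$ disjuncts, so $\theta \in L_{\lambda^+,\omega}$.

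The main obstacle is plainly the elementarity statement $(\ast)$ together with the resulting closure of $\mathbf{K}$ under $\equiv_{\infty,\omega}$: everything else reduces to Scott's and Karp's theorems and to cardinal bookkeeping, whereas $(\ast)$ is exactly the step that converts the purely combinatorial finitary axioms—finite character, amalgamation, and $\mathrm{LS}(\mathbf{K},\preccurlyeq)=\omega$—into genuine infinitary-logic elementarity. It is precisely here that finite character, rather than the mere $\mathrm{AEC}$ axioms, is indispensable.
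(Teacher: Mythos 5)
The paper offers no proof of this statement---it is quoted verbatim from Kueker \cite{kueker}---so there is nothing internal to compare against, and your sketch correctly reconstructs the strategy of Kueker's actual argument: establish that for finitary classes $\preccurlyeq$ implies $\prec_{\infty,\omega}$ and that $\mathbf{K}$ is closed under $\equiv_{\infty,\omega}$, then show every model is $\equiv_{\infty,\omega}$-equivalent to one of cardinality $\lambda$ and take the disjunction of at most $\lambda$ Scott sentences. You also rightly isolate the elementarity statement $(\ast)$ and the closure under $\equiv_{\infty,\omega}$ as the technical heart and as the precise points where finite character, rather than the bare $\mathrm{AEC}$ axioms, is indispensable.
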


	\begin{definition} Let $(\mathbf{K}, \preccurlyeq)$ be a finitary $\mathrm{AEC}$ with monster model $\mathfrak{M}$. Let also $a \in \mathfrak{M}^{< \omega}$ and $A \preccurlyeq \mathfrak{M}$. Then
$$ tp_{\omega_{1}, \omega}(a/A) = \{ \phi(x, b) : \phi(x, y) \in L_{\omega_1, \omega}, \; b \in A^{< \omega} \text{ and }  \mathfrak{M} \models  \phi(a, b) \}.$$
\end{definition}

	\begin{theorem}[Kueker {\cite[Remark after Corollary 4.9]{kueker}}]\label{kueker2} Let $(\mathbf{K}, \preccurlyeq)$ be a finitary and tame $\mathrm{AEC}$ with countable vocabulary. Assume also that $(\mathbf{K}, \preccurlyeq)$ is $\omega$-stable. Then for every $A \preccurlyeq \mathfrak{M}$ we have $tp_{\omega_{1}, \omega}(a/A) = tp_{\omega_{1}, \omega}(b/A) \text{ iff } tp(a/A) = tp(b/A)$.
\end{theorem}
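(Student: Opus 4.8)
The plan is to prove the two implications separately, noting that the direction $tp(a/A) = tp(b/A) \Rightarrow tp_{\omega_{1}, \omega}(a/A) = tp_{\omega_{1}, \omega}(b/A)$ is essentially formal, while the converse is where finitarity, tameness and $\omega$-stability do the work. For the formal direction I would use that in the monster model $\mathfrak{M}$ equality of Galois types over $A$ is witnessed by an automorphism $f$ of $\mathfrak{M}$ with $f \restriction A = id_A$ and $f(a) = b$. Any such isomorphism preserves satisfaction of $L_{\infty, \omega}$-formulas, hence of $L_{\omega_{1}, \omega}$-formulas; since the parameters lie in $A$ and are fixed by $f$, we get $\mathfrak{M} \models \phi(a,c)$ iff $\mathfrak{M} \models \phi(b,c)$ for every $\phi \in L_{\omega_{1}, \omega}$ and $c \in A^{<\omega}$, which is exactly $tp_{\omega_{1}, \omega}(a/A) = tp_{\omega_{1}, \omega}(b/A)$.

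For the converse I would argue by contraposition: assuming $tp(a/A) \neq tp(b/A)$, I would produce an $L_{\omega_{1}, \omega}$-formula over $A$ separating $a$ and $b$. First, since $(\mathbf{K}, \preccurlyeq)$ is finitary we have $\mathrm{LS}(\mathbf{K}, \preccurlyeq) = \omega$, so tameness applied with the strong submodel $A$ in the role of the large parameter model yields a countable $A_0 \preccurlyeq A$ with $tp(a/A_0) \neq tp(b/A_0)$. Over this countable model $\omega$-stability gives $\card{S(A_0)} \leq \omega$. The heart of the argument is then to attach to each Galois type $p \in S_n(A_0)$ an $L_{\omega_{1}, \omega}$-formula $\phi_p(x)$ with parameters in $A_0$ that \emph{isolates} $p$, i.e. is satisfied in $\mathfrak{M}$ by exactly the $n$-tuples of Galois type $p$ over $A_0$. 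Granting this, the formula $\phi_p$ with $p = tp(a/A_0)$ separates $a$ from $b$, so $tp_{\omega_{1}, \omega}(a/A_0) \neq tp_{\omega_{1}, \omega}(b/A_0)$, and since $A_0 \subseteq A$ this upgrades to $tp_{\omega_{1}, \omega}(a/A) \neq tp_{\omega_{1}, \omega}(b/A)$, as required.

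The construction of $\phi_p$ is the main obstacle, and it is here that all three hypotheses combine. The idea is to run a Scott-style analysis, but for the equivalence relation ``same Galois type over $A_0$'' on finite tuples rather than for raw $L_{\omega_{1}, \omega}$-equivalence. Finite character of the finitary class guarantees that this relation is itself of finite character, so that it can be approximated by a back-and-forth built entirely out of finite tuples; this is what lets each approximation stage be recorded by a single countable conjunction and disjunction of $L_{\omega_{1}, \omega}$-formulas over $A_0$. Since $S(A_0)$ is countable by $\omega$-stability, the back-and-forth stabilises at a countable ordinal (the analogue of a bounded Scott rank), so the resulting $\phi_p$ genuinely lies in $L_{\omega_{1}, \omega}$ and not merely in $L_{\infty, \omega}$. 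The delicate point to verify is that $\phi_p$ captures exactly the Galois type rather than a coarser $L_{\omega_{1}, \omega}$-type: one must check that two tuples agreeing on all stage-$\xi$ approximations actually lie in the same $\mathrm{Aut}(\mathfrak{M}/A_0)$-orbit, which I expect to follow from model-homogeneity of $\mathfrak{M}$ together with finite character, used to promote the stabilised back-and-forth system into a genuine automorphism of $\mathfrak{M}$ fixing $A_0$.
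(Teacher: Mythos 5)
First, a point of comparison: the paper does not prove this statement at all --- it is imported as a black box from Kueker (the remark after Corollary 4.9 of the cited paper), so there is no in-paper argument to measure you against. That said, your outline does track the route the cited result actually takes. The direction from equal Galois types to equal $L_{\omega_1,\omega}$-types via an automorphism of $\mathfrak{M}$ fixing $A$ pointwise is correct and complete. The reduction of the converse to a countable $A_0 \preccurlyeq A$ via tameness and $\mathrm{LS}(\mathbf{K},\preccurlyeq)=\omega$ is correct, as is the upgrade from a separating formula over $A_0$ to one over $A$. The stabilisation claim is also sound once you add the (unstated) observation that every level $E_\xi$ of your hierarchy is a coarsening of Galois-equivalence over $A_0$, so the hierarchy is a descending chain of partitions of the countable set $S(A_0)$ and must therefore stabilise at some $\delta<\omega_1$, with each level recorded by countable conjunctions and disjunctions over the countably many classes of the previous level.

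The genuine gap is the decisive step, which you assert rather than prove: that the stabilised relation $E_\delta$ is \emph{exactly} Galois-type equality over $A_0$, i.e.\ that $a \mathrel{E_\delta} b$ forces $a$ and $b$ into one $Aut(\mathfrak{M}/A_0)$-orbit. You say you ``expect'' this from model-homogeneity plus finite character, but this is precisely the mathematical content of the result being proved and it is not automatic. Concretely, the back-and-forth property of $E_\delta$ only yields an increasing chain of finite partial maps over $A_0$, hence (after interleaving with the LS-axiom) an isomorphism $g$ between countable $M,N$ with $A_0\cup\{a\}\subseteq M$, $g\restriction A_0=id$ and $g(a)=b$. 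To invoke model-homogeneity of $\mathfrak{M}$ you must know that $M,N\preccurlyeq\mathfrak{M}$ and that $g$ is a $\preccurlyeq$-embedding; the finite-character axiom reduces the latter to showing that each finite restriction $g\restriction X$ extends to a $\preccurlyeq$-embedding of $M$ into $\mathfrak{M}$, and extracting that from ``$X$ and $g(X)$ have the same $E_\delta$-class'' is exactly where Kueker's argument does real work --- and where one must avoid circularity, since the statement to be proved is that $E_\delta$ refines Galois equivalence. Without this lemma your formula $\phi_p$ might isolate only a coarser class than $p$, and the separation of $a$ from $b$ fails. So the proposal is a correct plan whose central lemma is missing; the honest completion is either to carry out that lemma or, as the paper does, to cite it.
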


	%\begin{theorem}\label{cate_implies_stability} Let $(\mathbf{K}, \preccurlyeq)$ be a finitary $\mathrm{AEC}$, and suppose that $(\mathbf{K}, \preccurlyeq)$ is uncountably categorical. Then $(\mathbf{K}, \preccurlyeq)$ is stable in every infinite cardinality.
%\end{theorem}

%	\begin{proof} See Theorems 4.17 (due to Shelah) and 4.6 of \cite{kesala_indep}.
%\end{proof}

\section{Triangle-Free Right-Angled Coxeter Groups}\label{triangle_free}
 
	From now till the end of the paper we denote by $\mathbf{K}$ the class of right-angled Coxeter groups, and by $\preccurlyeq$ the parabolic subgroup relation on $\mathbf{K}$ (cfr. Definition \ref{def_parabolic}), i.e. $A \preccurlyeq B$ if and only if there exists a Coxeter basis $S$ for $B$ such that $A \cap S$ is a Coxeter basis for $A$. Also, we denote by $\leq$  both the subgroup and the induced subgraph relation. Finally, we simply talk of bases instead of Coxeter bases. The next theorem shows that $(\mathbf{K}, \preccurlyeq)$ does not give rise to an abstract elementary class. In the rest of the paper we will see that restricting to particular classes of {\em strongly rigid} right-angled Coxeter groups we {\em do get} abstract elementary classes, and actually finitary ones (and in some cases also tame).
	
	\begin{theorem} The Smoothness Axiom fails for $(\mathbf{K}, \preccurlyeq)$.
\end{theorem}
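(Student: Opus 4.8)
The plan is to exhibit a continuous increasing $\preccurlyeq$-chain $(A_i)_{i<\omega}$ of right-angled Coxeter groups inside a common ambient group $B\in\mathbf K$ such that $A_i\preccurlyeq B$ for every $i$, yet $\bigcup_{i<\omega}A_i\not\preccurlyeq B$; by Definition \ref{def_AEC}(4.3) this refutes the Smoothness Axiom. The guiding observation is that smoothness can only fail by \emph{changing the basis in the limit}: once a basis $S$ of $B$ is fixed, the $S$-parabolic subgroups $\{W_{S_0}:S_0\subseteq S\}$ form a lattice isomorphic to the power set of $S$, with $\bigcup_n W_{S_0^n}=W_{\bigcup_n S_0^n}$, so an increasing chain of $S$-parabolics always has $S$-parabolic union. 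Any counterexample must therefore be built from subgroups parabolic with respect to \emph{different} bases, tied together by a sequence of conjugations whose partial products escape to infinity and hence are realised by no single element of $B$ --- exactly the non-strong-rigidity phenomenon isolated in Theorem \ref{recap_th}(c).

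Concretely, I would take $\Gamma$ to be the bipartite graph on $\{x_j:j<\omega\}\cup\{a_i:i<\omega\}$, with $\{x_j\}$ and $\{a_i\}$ both independent sets and $a_i$ adjacent to $x_j$ exactly when $j<i$, and set $B=W_\Gamma$. Writing $g_i=a_0a_1\cdots a_{i-1}$ and $D_i=\{x_0,\dots,x_{i-1}\}$, define $A_i=g_iW_{D_i}g_i^{-1}$. Since $a_i$ is adjacent to every vertex of $D_i$ it commutes with $W_{D_i}$, so $A_i=g_{i+1}W_{D_i}g_{i+1}^{-1}$; reading the chain through $g_{i+1}$ gives $A_i\subseteq A_{i+1}$ and $A_i\preccurlyeq A_{i+1}$ (as $A_i$ is generated by a subset of the basis $g_{i+1}D_{i+1}g_{i+1}^{-1}$ of $A_{i+1}$). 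Each $A_i$ is an inner conjugate of the standard parabolic $W_{D_i}$, hence $A_i\preccurlyeq B$ via the basis $g_iSg_i^{-1}$, and the chain is continuous for trivial reasons. Its union is $U=\langle\phi(x_j):j<\omega\rangle$, where $\phi(x_j)=g_{j+1}x_jg_{j+1}^{-1}=a_0\cdots a_j\,x_j\,a_j\cdots a_0$; the maps $h\mapsto g_ihg_i^{-1}$ on $W_{D_i}$ cohere to an isomorphism onto $U$, so $U\cong W_{\{x_j:j<\omega\}}$ is a right-angled Coxeter group and $U\in\mathbf K$.

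It remains to prove $U\not\preccurlyeq B$, and this is the main obstacle. By Castella rigidity every basis of $B$ has the form $\beta(S)$ with $\beta\in Aut(B)$, so $U$ is parabolic iff $\gamma(U)=W_{S_0}$ for some $\gamma\in Aut(B)$ and some $S_0\subseteq S$; since $U\cong\ast_{\omega}\,\mathbb{Z}/2\mathbb{Z}$, rigidity forces the Coxeter graph of $U$ to be edgeless, so $S_0$ is an independent set. The quick half is that $U$ is not a \emph{standard} parabolic: by Proposition \ref{bark_prop_2} the word $\phi(x_j)$ is reduced with support $\{a_0,\dots,a_j,x_j\}$, so the generators of $U$ (which are reflections of $(B,S)$) have supports exhausting all of $S$, whereas by Corollary \ref{reflections_prop} every reflection lying in $W_{S_0}$ is supported in $S_0$; as a parity count on the letter $a_0$ shows $a_0\notin U$, we have $U\neq B$, so $S_0=S$ is impossible. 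The genuine difficulty is to exclude a nontrivial ``straightening'' $\gamma$: here I would invoke the Tits decomposition $Aut(B)=Spe(B)\rtimes Aut(B,F(\Gamma))$ (cfr.\ \cite{tits} and Proposition \ref{castella_remark}), pin down the centralizers $C_B(\phi(x_j))$ via Lemma \ref{centralizers}, and exploit the unboundedness of the $a$-support of $U$ to argue that no automorphism of $B$ can carry $U$ onto a standard parabolic --- intuitively because $\phi$ is conjugation by the non-convergent infinite word $a_0a_1a_2\cdots$, which is the restriction of no automorphism of $B$ and so cannot be corrected by one. Making this final exclusion rigorous, rather than the construction of the chain itself, is where the real work lies.
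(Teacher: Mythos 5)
Your chain construction is sound: the groups $A_i=g_iW_{D_i}g_i^{-1}$ do form an increasing continuous $\preccurlyeq$-chain with $A_i\preccurlyeq B$, and your ``guiding observation'' about changing bases in the limit is exactly the phenomenon the paper exploits. But the proposal has a genuine gap, and you have located it yourself: the entire content of the theorem is the claim $U\not\preccurlyeq B$, and you do not prove it. What you offer for that step is a programme (``invoke the Tits decomposition, pin down the centralizers, exploit the unboundedness of the $a$-support''), not an argument; the sentence ``conjugation by the non-convergent infinite word $a_0a_1a_2\cdots$ cannot be corrected by an automorphism'' is precisely the statement to be proved, and ruling out \emph{every} basis $\beta(S)$ of $B$ --- not just the standard one --- requires controlling an arbitrary $\gamma\in Aut(B)$, which your sketch does not do. As it stands the proposal establishes only that $U$ is not an $S$-parabolic, which does not refute smoothness.

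It is worth seeing how the paper engineers its example so that this hard step becomes tractable. There the limit group $A=\bigcup_nA_n$ is generated by a \emph{countable clique} of pairwise-commuting reflections $e_i=c_ib_ic_i^{-1}$, sitting inside a $B$ whose Coxeter graph contains an \emph{uncountable} clique $\{b_i:i<\omega_1\}$. If $S^*\subseteq S'$ were bases witnessing $A\preccurlyeq B$, then (after straightening $S'$ to $S$ by an automorphism $\alpha$, using rigidity) $\alpha(S^*)$ is a countable pairwise-commuting subset of $S$, hence contained in the uncountable clique; a counting argument then produces $x\in S-\alpha(S^*)$ commuting with all of $\alpha(S^*)$, so $y=\alpha^{-1}(x)$ centralizes $A$. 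A support computation (via Lemma \ref{centralizers} and normal forms) then forces $y\in A_n$ for some $n$, contradicting $y\in S'-S^*$. Your graph is bipartite with both sides countable and the limit group a free product of $\mathbb{Z}/2$'s, so no such cardinality-plus-centralizer mechanism is available, and a correct completion would need a genuinely different argument. The fix is either to supply that argument in full or to modify the configuration so that the limit group is an infinite clique inside a strictly larger clique, as the paper does.
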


	\begin{proof} Let $(B, S)$ be the Coxeter system with $S = \{ a_i : i < \omega \} \cup \{ b_i : i < \omega_1 \}$ such that $\{ a_i : i < \omega \}$ is an independent set, $\{ b_i : i < \omega_1 \}$ is a clique, and $a_i$ commutes with $b_j$ iff $j < i$, for every $i < \omega$. For $n < \omega$, let $c_n = a_0 \cdots a_n$, $e_n = c_nb_nc_n^{-1}$ and $A_n = \langle e_i : i < n \rangle_B$. Notice that for every $i \leq j < \omega$ we have $c_jb_ic_j^{-1} = c_ib_ic_i^{-1}$. It follows that for every $m < n < \omega$, we have $A_m \preccurlyeq A_n \preccurlyeq B$, as witnessed by the bases $ \{ e_i : i < m \} \subseteq \{ e_i : i < n \}  \subseteq c_nSc_n^{-1}$. We claim that $\bigcup_{n < \omega} A_n = A \not\preccurlyeq B$. Suppose not, and let $S^*$ be a basis of $A$ that extends to a basis $S'$ of $B$. Let $\alpha \in Aut(B)$ be such that $\alpha(S') = S$. Then $\alpha(S^*) \subseteq \{ b_i : i < \omega_1\}$, and so there exists $x \in S-\alpha(S^*)$ such that $x$ commutes with every element of $\alpha(S^*)$. Let $y = \alpha^{-1}(x)$, then $y$ commutes with every element of $A$. Let $n < \omega$ be such that if $b_i$ or $a_i$ is in the $S$-support of $y$, then either $i \geq \omega$ or $i < n$. Also, let $z = c_n^{-1}yc_n$. Now, $y$ commutes with every element of $A$, and so in particular it commutes with $e_n$. Thus, $z = c_n^{-1}yc_n$ commutes with $c_n^{-1}e_nc_n=b_n$. Now, if for some $i \geq n$, $b_i$ is in the $S$-support of $z$, then also $a_n$ is there and so $z$ does not commutes with $b_n$ (cfr. Lemma \ref{centralizers}). Similarly, for every $i < \omega$, $a_i$ is not in the $S$-support of $z$. Thus, $z \in \langle b_i : i < n \rangle_B$ and so $c_nzc_n^{-1} = y \in \langle c_nb_ic_n^{-1} : i < n \rangle_B = A_n$, which is a contradiction, since $y = \alpha^{-1}(x)$, for $x \in S-\alpha(S^*)$.
\end{proof}
	
	\begin{theorem}\label{almost_AEC_th} Let $\mathbf{K}'_{*}$ be a class of graphs such that $(\mathbf{K}'_{*}, \leq)$ is closed under limits and every $B \in \mathbf{K}_{*} = \left\{ A \in \mathbf{K} : \Gamma_A \in \mathbf{K}'_{*} \right\}$ is strongly rigid. Then $(\mathbf{K}_{*}, \preccurlyeq)$ satisfies conditions (1), (2), (3), (4.1), (4.2) and (5) of Definition \ref{def_AEC}. Furthermore, $\mathrm{LS}(\mathbf{K}_{*}, \preccurlyeq) = \mathrm{LS}(\mathbf{K}'_{*}, \leq)$, and if $(\mathbf{K}'_{*}, \leq)$ has $\mathrm{AP}$, $\mathrm{JEP}$ and $\mathrm{ALM}$, then $(\mathbf{K}_{*}, \preccurlyeq)$ does. 
\end{theorem}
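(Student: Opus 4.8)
The plan is to verify each AEC axiom by transporting it from the class of graphs $(\mathbf{K}'_{*}, \leq)$ to the class of groups $(\mathbf{K}_{*}, \preccurlyeq)$, using strong rigidity as the engine that makes the parabolic subgroup relation behave like the induced subgraph relation. The crucial observation, which I would state first, is this: if $A \preccurlyeq B$, then by definition $A = B_{S'}$ for some basis $S$ of $B$ and $S' \subseteq S$; and by strong rigidity any two bases of $B$ differ by an inner automorphism. So up to conjugation, ``$A \preccurlyeq B$'' means precisely that $\Gamma_A$ embeds into $\Gamma_B$ as an induced subgraph via the identification of $A$'s basis with a subset of $B$'s basis. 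This dictionary is what reduces everything to combinatorics on the Coxeter graphs.

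Next I would dispatch the axioms in order. Conditions (1) (closure under isomorphism) and (2) ($A \preccurlyeq B \Rightarrow A \leq B$) are immediate from the definitions. For (3), reflexivity and antisymmetry are routine; \emph{transitivity} is the first place strong rigidity does real work. If $A \preccurlyeq B \preccurlyeq C$, choose a basis $S_C$ of $C$ with $S_B := S_C \cap B$ a basis of $B$, and a basis $S_B'$ of $B$ with $S_A := S_B' \cap A$ a basis of $A$. By strong rigidity of $B$ there is an inner automorphism $\iota = \mathrm{Inn}(w)$ of $B$ carrying $S_B'$ to $S_B$; since $w \in B \leq C$, this $\iota$ extends to an inner automorphism of $C$ fixing $C$ setwise and carrying $S_C$ to another basis $S_C'$ of $C$ with $S_C' \cap B = S_B$ still a basis of $B$ and now $\iota(S_A) \subseteq S_B \subseteq S_C'$, witnessing $A \preccurlyeq C$. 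This is exactly the argument the paper flags as ``essentially trivial'' in the presence of strong rigidity. Axiom (4.1) (unions are in $\mathbf{K}_{*}$) and the equality $\mathrm{LS}(\mathbf{K}_{*}, \preccurlyeq) = \mathrm{LS}(\mathbf{K}'_{*}, \leq)$ follow because the graph of a union of a $\preccurlyeq$-chain is the corresponding limit of Coxeter graphs, which lies in $\mathbf{K}'_{*}$ by the assumed closure under limits, and because a Coxeter group and its graph have the same cardinality. For (4.2) (each $A_j \preccurlyeq \bigcup_i A_i$), I would build a coherent chain of bases $S_j$ with $S_j \subseteq S_{j+1}$ along the chain, using strong rigidity at each step to align the basis witnessing $A_j \preccurlyeq A_{j+1}$ with the one already fixed on $A_j$; the union $\bigcup_j S_j$ is then a basis of the union with $A_j$'s basis a subset of it. The Coherence Axiom (5) translates, under the dictionary, into the statement that if $\Gamma_A$ sits inside $\Gamma_C$ as an induced subgraph (via bases) and $\Gamma_B$ does too, and $A \leq B$ as groups, then $\Gamma_A$ sits inside $\Gamma_B$ as an induced subgraph; here I would use Corollary \ref{reflections_prop} and the support machinery (Lemma \ref{subword_lemma}, Theorem \ref{normal_form}) to recover the basis of $A$ as the reflections of $A$ lying in the appropriate parabolic, so that a basis of $A$ is forced to be a subset of a suitable basis of $B$.

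Finally, the preservation of $\mathrm{AP}$, $\mathrm{JEP}$ and $\mathrm{ALM}$ from graphs to groups should follow functorially: the assignment $\Gamma \mapsto W_\Gamma$ is a functor from graphs-with-induced-embeddings to groups-with-$\preccurlyeq$-embeddings (by the dictionary above, an induced subgraph inclusion $\Gamma_A \leq \Gamma_B$ yields $A \preccurlyeq B$), and it sends amalgamation diagrams of graphs to amalgamation diagrams of groups because $W_{\Gamma_0 \cup \Gamma_1} $ over $W_{\Gamma_0 \cap \Gamma_1}$ realizes the pushout on the group side; the same functor produces joint embeddings and arbitrarily large models (a graph of size $\lambda$ giving a group of size $\lambda$). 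I expect the main obstacle to be the Coherence Axiom (5): establishing that group-theoretic containment $A \leq B$, together with both being parabolic in a common $C$, forces the \emph{graph}-theoretic compatibility of the bases is the one step where the naive dictionary is not automatic and where one genuinely needs the reflection/support lemmas to pin down that the canonical basis of $A$ (its reflections that are generators) must be extendable within a basis of $B$ rather than merely sitting inside $B$ as an abstract subgroup.
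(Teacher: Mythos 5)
Your proposal follows the paper's proof essentially step for step: strong rigidity is used to align bases for transitivity, for the chain axioms (4.1)--(4.2), and for amalgamation (where, as in the paper, one must first conjugate the two bases of $C$ witnessing $C \preccurlyeq A$ and $C \preccurlyeq B$ into agreement by an element of $C$ before passing to a graph amalgam), and Lemma \ref{subword_lemma} is indeed the engine for the Coherence Axiom, which you correctly single out as the only nontrivial step. The one place your sketch is thinner than the paper's argument is coherence itself: knowing via Corollary \ref{reflections_prop} that each $s \in S$ is individually a reflection of $(B, T')$ does not yet produce a \emph{single} basis of $B$ containing all of $S$; the paper obtains one uniform conjugator $h \in B$ by deleting from a normal form of the conjugating element $g \in C$ (relating the two bases of $C$) all letters outside $T'$ and invoking Lemma \ref{subword_lemma} to see that $h t_s h^{-1} = g t_s g^{-1} = s$ for every $s \in S$ simultaneously.
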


	\begin{proof} The furthermore part is immediate. For amalgamation, let $A, B, C \in \mathbf{K}_{*}$ be such that $C \preccurlyeq A, B$ and $A \cap B = C$ (without loss of generality). Then there exists basis $S'$ for $A$ and $T'$ for $B$ such that $S = S'\cap A$ and $T = T' \cap B$ are bases for $C$. Thus, there exists $g \in C$ such that $gTs^{-1} = S$, and so $gT's^{-1} = S''$ is a basis for $B$ such that $S' \cap S'' = S$. Hence, any amalgam for $(S,E) \leq (S', E), (S'', E)$ is an amalgam for $C \preccurlyeq A, B$.
	Items (1) and (2) of Definition \ref{def_AEC} are clear. We prove (3). Let $A \preccurlyeq B \preccurlyeq C$. Then there exists a basis $S'$ for $B$ such that $S = S' \cap A$ is a basis for $A$, and a basis $T''$ for $C$ such that $T' = T'' \cap B$ is a basis for $B$. Thus, because of strong rigidity, there exists $g \in B$ such that $S' = gT'g^{-1}$, and so $S'' = gT''g^{-1}$ is a basis for $C$ containing $S$, i.e. $A \preccurlyeq C$.
\smallskip

\noindent	
	We prove (4.1) and (4.2). Let $({A}_i)_{i < \delta}$ be an increasing continuous $\preccurlyeq$-chain. Using strong rigidity, without loss of generality we can assume that $(\Gamma_{A_i} = (S_i, E))_{i < \alpha}$ is an increasing continuous chain of graphs under the induced subgraph relation. Using the Universality Property for Coxeter groups (see e.g. \cite[pg. 3]{bjorner}) it is immediate to see that $\bigcup_{i < \delta} {A}_i = A$ is the Coxeter group of type $\bigcup_{i < \alpha}\Gamma_{A_i}$, and so $A \in  \mathbf{K}$. This establishes (4.1) and (4.2) at once. 
\smallskip

\noindent
We prove (5). Let ${A} \preccurlyeq {C}$, $B \preccurlyeq {C}$ and ${A} \leq {B}$. Let $S''$ be a witness for ${A} \preccurlyeq {C}$ and $S = S'' \cap A$. Let also $T''$ be a witness for ${B} \preccurlyeq {C}$ and $T' = T'' \cap B$. Now, $S''$ and $T''$ are two bases for $C$ and so we can find $g \in C$ such that $S'' = gT''g^{-1}$, i.e. for every $s \in S''$ there exists $t_s \in T''$ such that $s = gt_sg^{-1}$. Let $a_1 \cdots a_k$ be a $T''$-normal form for $g$. Notice that $S \subseteq A \subseteq B$ and $S \subseteq S''$, and so for every $s \in S$ we have $s = gt_sg^{-1} \in B$. Thus
\begin{equation}\label{supports_equation}
sp(gt_sg^{-1}) \subseteq T',
\end{equation} 
where the support is taken in the basis $T''$. Let $a_{q_1} \cdots a_{q_n}$ be the subword of $a_1 \cdots a_k$ obtained by deleting all the occurrences of letters in $T'' - T'$. Then because of (\ref{supports_equation}) and Lemma \ref{subword_lemma} we have that $a_{q_1} \cdots a_{q_n} = h \in B$ is such that 
$$s = gt_sg^{-1} = ht_sh^{-1},$$
for every $s \in S$. Thus, $hT'h^{-1} = S'$ is a basis for $B$ such that $S \subseteq S'$, and so $A \preccurlyeq B$.
\end{proof}	

	\begin{lemma}\label{lemma_for_smooth} Let $B$ be a strongly rigid right-angled Coxeter group, and $T_0$ and $T_1$ bases for $B$. If $T_0 \cap T_1$ contains $P_4 = s_0Es_1Es_2Es_3$, $s_0$ is not adjacent to $s_2$, $s_1$ is not adjacent to $s_3$ and there is no $t \in T_1$ such that $s_0EtEs_1$, then $T_0 = T_1$.
\end{lemma}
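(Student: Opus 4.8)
The plan is to exploit strong rigidity to replace the two bases by a single inner conjugation, and then to show that the conjugating element must be trivial. Since $B$ is strongly rigid (Definition \ref{def_rigidity}), there is $g \in B$ such that the inner automorphism $\alpha = \mathrm{conj}_g$ satisfies $\alpha(T_1) = T_0$; equivalently, $g t g^{-1} \in T_0$ for every $t \in T_1$. The goal then reduces to proving $g = e$, for then $\alpha = \mathrm{id}$ and $T_0 = \alpha(T_1) = T_1$.

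The key step is to show that $\alpha$ fixes the $P_4$ pointwise, i.e.\ $g s_i g^{-1} = s_i$ for $i = 0,1,2,3$. Here I will use the elementary fact that in any right-angled Coxeter system $(W,S)$ two distinct generators in $S$ are never conjugate in $W$. Indeed, for fixed $s \in S$ the assignment $s \mapsto 1$ and $t \mapsto 0$ (for $t \in S \setminus \{s\}$) extends to a homomorphism $\varphi_s \colon W \to \mathbb{Z}/2\mathbb{Z}$: this is well-defined because the moves $(M_1)$ and $(M_2)$ preserve the parity of the number of occurrences of each letter, so by Theorem \ref{normal_form} that parity is an invariant of the group element. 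Since $\varphi_s(hsh^{-1}) = \varphi_s(s) = 1$ while $\varphi_s(t) = 0$ for $t \neq s$, no two distinct generators are conjugate. Now each $s_i$ lies in $T_0 \cap T_1$; as $s_i \in T_1$ we have $\alpha(s_i) = g s_i g^{-1} \in T_0$, and $\alpha(s_i)$ is conjugate to $s_i$ with both belonging to the basis $T_0$. Applying the fact to $(B, T_0)$ forces $\alpha(s_i) = s_i$, so $g$ centralizes each $s_i$.

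It remains to pin down $g$. By Lemma \ref{centralizers}, viewing each $s_i$ as a vertex of $\Gamma_{(B,T_1)}$, we obtain $g \in C_B(s_i) = B_{st(s_i)}$, whence $sp(g) \subseteq \bigcap_{i=0}^{3} st(s_i)$, with supports and stars computed in $T_1$. A short combinatorial check shows this intersection is empty: no $s_j$ lies in all four stars (e.g.\ $s_0 \notin st(s_2)$ since $s_0$ is not adjacent to $s_2$, and $s_1 \notin st(s_3)$ since $s_1$ is not adjacent to $s_3$), while any vertex outside $\{s_0,\dots,s_3\}$ lying in all four stars would in particular be a common neighbour of $s_0$ and $s_1$, contradicting the hypothesis that no $t \in T_1$ satisfies $s_0 E t E s_1$. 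Hence $sp(g) = \emptyset$, so $g = e$ and $T_0 = T_1$.

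I expect the main obstacle to be the middle step — establishing $\alpha(s_i) = s_i$ — which rests on the non-conjugacy of distinct generators; the reduction via strong rigidity and the final combinatorial computation are routine. The one point requiring care is consistency of bases: the non-conjugacy argument is applied in $T_0$, whereas Lemma \ref{centralizers} and the star/support computation must all be carried out in $T_1$ (with each $s_i$ regarded as a vertex of $\Gamma_{(B,T_1)}$), so that the hypotheses on the $P_4$ and on common neighbours, which are stated for $T_1$, are exactly what is used.
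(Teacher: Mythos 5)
Your proof is correct and follows essentially the same route as the paper's: strong rigidity produces a conjugator $g$, non-conjugacy of distinct generators in a right-angled basis forces $g$ to fix the $P_4$ pointwise, and the adjacency hypotheses then force $sp(g)=\emptyset$. The only differences are cosmetic — you prove the non-conjugacy fact via the parity homomorphism where the paper cites Bourbaki, and you package the support computation as $sp(g)\subseteq\bigcap_i st(s_i)=\emptyset$ via Lemma \ref{centralizers} where the paper argues directly with normal forms.
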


	\begin{proof} Let $T_0$, $T_1$ and $P_4 = s_0Es_1Es_2Es_3$ be as in the statement of the theorem. Then there exists $g \in B$ such that $T_1 = gT_0g^{-1}$. Let $s \in P_4$, then $g s g^{-1} = s$, because otherwise we would have $s \neq g s g^{-1}$ both in $T_1$, contradicting the fact that $T_1$ is a basis for $B$ (cfr. \cite[pg. 5]{bourbaki}). Suppose now that there exists $t \in sp(g) - \{ s_0, s_1 \}$, where the support is taken in the basis $T_1$. Then $t$ commutes with $s_0$ because otherwise by Theorem \ref{normal_form} we have $s_0 \neq gs_0g^{-1}$. Similarly,  $t$ commutes with $s_1$ because otherwise $s_1 \neq gs_1g^{-1}$. Thus, $s_0EtEs_1$, which is a contradiction. Hence, $sp(g) \subseteq \{ s_0, s_1 \}$.
On the other hand, $s_0 \not\in sp(g)$ and $s_1 \not\in sp(g)$, because otherwise $s_2 \neq gs_2g^{-1}$ or  $s_3 \neq gs_3g^{-1}$. It follows that $g = 1$, i.e. $T_1 = T_0$.
\end{proof}

	\begin{theorem}\label{smoothness} Let $\mathbf{K}_{*}$ be a class of strongly rigid right-angled Coxeter groups such that for every $A \in \mathbf{K}_{*}$ we have that $\Gamma_A$ is triangle-free. Suppose further that whenever $A \preccurlyeq B \in \mathbf{K}_{*}$ and $T$ is a basis for $B$ such that $S = T \cap A$ is a basis for $A$, then the basis $S$ contains a copy of $P_4 = s_0Es_1Es_2Es_3$ such that $s_0$ is not adjacent to $s_2$ and $s_1$ is not adjacent to $s_3$. Then $(\mathbf{K}_{*}, \preccurlyeq)$ satisfies the Smoothness Axiom and it has finite character.
\end{theorem}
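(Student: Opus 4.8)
The plan is to establish both properties with a single device: Lemma~\ref{lemma_for_smooth}, combined with triangle-freeness, forces a family of bases of $B$ to collapse to a single basis $T$, after which one reads off that $T\cap A$ is a basis of $A$ and hence $A\preccurlyeq B$. I will use repeatedly that adjacency of two basis elements $s,s'$ is intrinsic to the group, since $sEs'$ holds precisely when $s$ and $s'$ commute, and that by strong rigidity any witness basis $U$ of $B$ for a relation $A_0\preccurlyeq B$ can be conjugated by an element of $A_0$ so that $U\cap A_0$ becomes any prescribed basis of $A_0$ (conjugation by $g\in A_0$ commutes with intersecting against $A_0$). Given a good $P_4=s_0Es_1Es_2Es_3$ supplied by the hypothesis, triangle-freeness of $\Gamma_B$ rules out any $t$ in a basis with $s_0EtEs_1$, so the hypotheses of Lemma~\ref{lemma_for_smooth} are automatically met whenever two bases share such a $P_4$.

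For the Smoothness Axiom I would take an increasing continuous $\preccurlyeq$-chain $(A_i)_{i<\delta}$ with each $A_i\preccurlyeq B$ and, following the proof of Theorem~\ref{almost_AEC_th}, arrange that the $\Gamma_{A_i}=(S_i,E)$ form an increasing chain of induced subgraphs with $A=\bigcup_iA_i\in\mathbf{K}_{*}$ of type $\bigcup_i\Gamma_{A_i}$ and $S=\bigcup_iS_i$ a basis of $A$. Normalising the witness for $A_0\preccurlyeq B$, I fix a basis $T$ of $B$ with $T\cap A_0=S_0$ and let $s_0Es_1Es_2Es_3\subseteq S_0$ be the hypothesised $P_4$. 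For each $i$ I similarly fix a basis $T_i$ of $B$ with $T_i\cap A_i=S_i$; since $S_0\subseteq S_i$ the $P_4$ lies in $T\cap T_i$, so Lemma~\ref{lemma_for_smooth} yields $T_i=T$. Then $T\cap A_i=S_i$ for all $i$, and hence $T\cap A=\bigcup_i(T\cap A_i)=S$ is a basis of $A$, giving $A\preccurlyeq B$.

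For finite character I would assume $A\leq B$ in $\mathbf{K}_{*}$ with a $\preccurlyeq$-embedding $f_X\colon A\to B$ fixing $X$ pointwise for each finite $X\subseteq A$. Applying the hypothesis to $f_\emptyset(A)\preccurlyeq B$ and pulling back along $f_\emptyset$, I first secure a basis $S$ of $A$ containing a good $P_4=s_0Es_1Es_2Es_3$, and set $X_0=\{s_0,s_1,s_2,s_3\}$. For every finite $X\supseteq X_0$ the embedding $f_X$ witnesses $f_X(A)\preccurlyeq B$; normalising, I obtain a basis $T_X$ of $B$ with $T_X\cap f_X(A)=f_X(S)$, and since $f_X$ fixes $X_0$ the $P_4$ lies in $f_X(S)\subseteq T_X$, so Lemma~\ref{lemma_for_smooth} again collapses all the $T_X$ to one basis $T$. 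Taking $X=X_0\cup\{s\}$ for each $s\in S$ gives $s=f_X(s)\in f_X(S)\subseteq T$, so $S\subseteq T$; as $A=\langle S\rangle_B$ with $S$ a subset of the basis $T$ of $B$, this exhibits $A$ as a $T$-parabolic subgroup, i.e. $A\preccurlyeq B$.

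The hard part, common to both arguments, is passing from many a priori unrelated witness bases to a single one. Lemma~\ref{lemma_for_smooth} does this, but only after two preparations I expect to be the main source of friction: conjugating each witness basis (via strong rigidity) so that it genuinely contains the fixed $P_4$ --- here the pointwise fixing of $X$ by $f_X$, respectively the inclusion $S_0\subseteq S_i$, is what keeps the same four vertices in play --- and verifying the ``no intermediate vertex'' clause of the lemma, which is exactly where triangle-freeness of $\Gamma_B$ enters essentially. Once these are secured the collapse of the bases, and the identification of $T\cap A$ (resp. the inclusion $S\subseteq T$), is routine.
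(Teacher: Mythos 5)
Your proposal is correct and follows essentially the same route as the paper: both parts reduce to collapsing the family of witness bases to a single basis of $B$ via Lemma~\ref{lemma_for_smooth}, with the shared $P_4$ coming from the theorem's hypothesis (normalised using strong rigidity so the same four vertices lie in every witness basis) and the ``no intermediate vertex'' clause supplied by triangle-freeness. The only cosmetic difference is that you anchor the finite-character argument at $X_0=\{s_0,s_1,s_2,s_3\}$ and conclude $S\subseteq T$ directly, whereas the paper indexes by arbitrary finite $Y\subseteq S$ and identifies all $T_X$ with $T_{P_4}$; the content is identical.
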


\begin{proof} We show that $(\mathbf{K}_{*}, \preccurlyeq)$ is smooth. Let $({A}_i)_{i < \alpha}$ be an increasing continuous $\preccurlyeq$-chain such that each ${A}_i \preccurlyeq {B}$. Using strong rigidity, without loss of generality we can assume that $(\Gamma_{A_i} = (S_i, E))_{i < \alpha}$ is an increasing continuous chain of graphs under the induced subgraph relation, and that there are $(T_i)_{i < \alpha}$ bases for $B$ such that $T_i \cap A_i = S_i$, for every $i < \alpha$. Let $i < \alpha$, then using the assumption of the theorem for $T_i$ and $S_0$ we have that $T_0 \cap T_i$ contains $P_4 = s_0Es_1Es_2Es_3$, $s_0$ is not adjacent to $s_1$, $s_1$ is not adjacent to $s_3$ and there is no $t \in T_i$ such that $s_0EtEs_1$. Thus, by Lemma \ref{lemma_for_smooth}, we have that $T_i = T_0$. Hence, $\bigcup_{i < \alpha} S_i \subseteq T_0$, witnessing that $\bigcup_{i< \alpha} A_i \preccurlyeq B$.
\smallskip

\noindent
We show that $(\mathbf{K}_{*}, \preccurlyeq)$ has finite character. Suppose that $A \leq B$ and for every $X \subseteq_{fin} A$ there exists $\preccurlyeq$-embedding $f_X: A \rightarrow B$ such that $f \restriction X = id_X$. Let $S$ be a basis for $A$. For every $X \subseteq A$ we have $A \cong f_X(A)$, and so $f_X(S)$ is a basis for $f_X(A)$. It follows that:
\begin{equation}\tag{$\star$}
\text{$\forall X \subseteq_{fin} S$, $\exists \, T_X$ basis of $B$ such that $T_X$ extends $f_X(S)$ and $X \subseteq T_X$},
\end{equation} 
this is because $f_X(A) \preccurlyeq B$, of course. Fix $Y \subseteq_{fin} S$, then $f_Y(A) \preccurlyeq B$, and so using the assumption of the theorem for $T_Y$ and $f_Y(S)$ we get $P'_4 = s'_0Es'_1Es'_2Es'_3$ in $f_Y(S)$, such that $s'_0$ is not adjacent to $s'_2$ and $s'_1$ is not adjacent to $s'_3$. Let now $f^{-1}_Y(P'_4) = P_4 = s_0Es_1Es_2Es_3$. Then, noticing that $P_4 \subseteq S$, and recalling $(\star)$ and that $\Gamma_B$ is triangle free we have that $T_{P_4}$ is a basis of $B$ such that $s_0$ is not adjacent to $s_1$, $s_1$ is not adjacent to $s_3$ and there is no $t \in T_{P_4}$ such that $s_0EtEs_1$. Thus, by Lemma \ref{lemma_for_smooth}, for every $P_4 \subseteq X \subseteq_{fin} S$ we have that $T_X = T_{P_4}$. Hence, for every $X \subseteq_{fin} S$ we have $X \subseteq T_{P_4}$, and so $S \subseteq T_{P_4}$, i.e. $A \preccurlyeq B$.
\end{proof}

	Let $\mathbf{K}'_{*}$ be a class of graphs such that $(\mathbf{K}'_{*}, \leq)$ is an $\mathrm{AEC}$ with $\mathrm{AP}$, $\mathrm{JEP}$ and $\mathrm{ALM}$. Suppose that $\mathbf{K}_{*} = \left\{ A \in \mathbf{K} \!: \Gamma_A \!\in \mathbf{K}'_{*} \right\}$ is a class of strongly rigid right-angled Coxeter groups, and that $(\mathbf{K}_{*}, \preccurlyeq)$ is also an $\mathrm{AEC}$ (and thus, by Theorem \ref{almost_AEC_th}, it has $\mathrm{AP}$, $\mathrm{JEP}$ and $\mathrm{ALM}$). Notice that under these conditions, modifying a little the construction of $\mathfrak{M}(\mathbf{K}_{*}, \preccurlyeq)$ we can assume that $\Gamma_{\mathfrak{M}(\mathbf{K}_{*}, \preccurlyeq)} = \mathfrak{M}(\mathbf{K}'_{*}, \leq)$. In the following theorem we will use this assumption crucially.

	\begin{theorem}\label{tameness} Let $\mathbf{K}'_{*}$ be a class of graphs such that $(\mathbf{K}'_{*}, \leq)$ is an $\mathrm{AEC}$ with $\mathrm{AP}$, $\mathrm{JEP}$ and $\mathrm{ALM}$. Suppose that $\mathbf{K}_{*} = \left\{ A \in \mathbf{K} \!: \Gamma_A \!\in \mathbf{K}'_{*} \right\}$ is a class of strongly rigid right-angled Coxeter groups, and that $(\mathbf{K}_{*}, \preccurlyeq)$ is also an $\mathrm{AEC}$ (and thus, by Theorem \ref{almost_AEC_th}, it has $\mathrm{AP}$, $\mathrm{JEP}$ and $\mathrm{ALM}$) with $\mathrm{LS}(\mathbf{K}_{*}, \preccurlyeq) = \omega$. Suppose further that for every $A \in \mathbf{K}$, $Aut(\mathfrak{M}/A) \leq Aut(\Gamma_{\mathfrak{M}})$. Then if $(\mathbf{K}'_{*}, \leq)$ is tame, so is $(\mathbf{K}_{*}, \preccurlyeq)$.
\end{theorem}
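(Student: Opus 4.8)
The plan is to reduce Galois types in $(\mathbf{K}_{*}, \preccurlyeq)$ to Galois types of finite vertex tuples in the graph class $(\mathbf{K}'_{*}, \leq)$, and then invoke the tameness of the latter. Throughout write $\mathfrak{M} = \mathfrak{M}(\mathbf{K}_{*}, \preccurlyeq)$, so that $\Gamma_{\mathfrak{M}} = \mathfrak{M}(\mathbf{K}'_{*}, \leq)$ by the normalisation fixed before the statement. I would first record the dictionary between the two automorphism groups. Since $(\mathbf{K}_{*}, \preccurlyeq)$ has $\mathrm{AP}$, $\mathrm{JEP}$ and $\mathrm{ALM}$, for any $C \preccurlyeq \mathfrak{M}$ and $a, b \in \mathfrak{M}^{<\omega}$ we have $tp(a/C) = tp(b/C)$ iff some $f \in Aut(\mathfrak{M}/C)$ satisfies $f(a) = b$. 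As $C \in \mathbf{K}$, the standing hypothesis gives $Aut(\mathfrak{M}/C) \leq Aut(\Gamma_{\mathfrak{M}})$; a group automorphism that is also a graph automorphism is the canonical extension $\bar{g}$ of its restriction $g = f \restriction \Gamma_{\mathfrak{M}}$, and since $\Gamma_{C} \subseteq C$ this $g$ fixes $\Gamma_{C}$ pointwise. Conversely, any $g \in Aut(\Gamma_{\mathfrak{M}})$ fixing $\Gamma_{C}$ pointwise extends to $\bar{g} \in Aut(\mathfrak{M})$ fixing $C = \langle \Gamma_{C}\rangle_{\mathfrak{M}}$ pointwise. Hence
\[
tp(a/C) = tp(b/C) \iff \exists\, g \in Aut(\Gamma_{\mathfrak{M}}/\Gamma_{C}) \text{ with } \bar{g}(a) = b .
\]

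Next I would convert the condition $\bar{g}(a) = b$ into a statement about graph Galois types. Fix reduced words $a_{i} = s_{i,1}\cdots s_{i,k_{i}}$ and let $\hat{a} \in \Gamma_{\mathfrak{M}}^{<\omega}$ be the concatenation of their letters. Because the reducedness criterion of Proposition \ref{bark_prop_2} is phrased purely through equality of letters and stars, any $g \in Aut(\Gamma_{\mathfrak{M}})$ sends reduced words to reduced words, and $\bar{g}(a_{i}) = g(s_{i,1})\cdots g(s_{i,k_{i}})$. Since each $b_{i}$ has only finitely many reduced words (all of length $|b_{i}|$ and using the same letters), there are finitely many vertex tuples $\hat{b}^{(1)}, \ldots, \hat{b}^{(N)}$ obtained by concatenating reduced words of the $b_{i}$. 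A short computation then yields, for every $C \preccurlyeq \mathfrak{M}$,
\[
\exists\, g \in Aut(\Gamma_{\mathfrak{M}}/\Gamma_{C}),\ \bar{g}(a) = b \iff \bigvee_{m=1}^{N} tp(\hat{a}/\Gamma_{C}) = tp(\hat{b}^{(m)}/\Gamma_{C}),
\]
where the right-hand types are computed in $(\mathbf{K}'_{*}, \leq)$: given such a $g$, the tuple $g(\hat{a})$ is a reduced spelling of $b$, hence equals some $\hat{b}^{(m)}$, and conversely $g(\hat{a}) = \hat{b}^{(m)}$ forces $\bar{g}(a) = b$ blockwise.

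Finally I would run the tameness argument. Suppose $tp(a/B) \neq tp(b/B)$ in $(\mathbf{K}_{*}, \preccurlyeq)$; by the two equivalences this means $tp(\hat{a}/\Gamma_{B}) \neq tp(\hat{b}^{(m)}/\Gamma_{B})$ for \emph{every} $m \leq N$. Applying the tameness of $(\mathbf{K}'_{*}, \leq)$ to each $m$ (note $\mathrm{LS}(\mathbf{K}'_{*}, \leq) = \mathrm{LS}(\mathbf{K}_{*}, \preccurlyeq) = \omega$ by Theorem \ref{almost_AEC_th}), we obtain a countable $D_{m} \leq \Gamma_{B}$ with $tp(\hat{a}/D_{m}) \neq tp(\hat{b}^{(m)}/D_{m})$. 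Let $D = \bigcup_{m \leq N} D_{m}$, a countable subset of $\Gamma_{B}$, and use the Löwenheim–Skolem property of the graph class to find a countable $D^{*} \leq \Gamma_{B}$ with $D \subseteq D^{*}$. Setting $A = \langle D^{*} \rangle_{B}$ produces a countable $A \preccurlyeq B$ in $\mathbf{K}_{*}$ with $\Gamma_{A} = D^{*}$. If $tp(a/A) = tp(b/A)$ held, the equivalences would give some $m_{0}$ with $tp(\hat{a}/\Gamma_{A}) = tp(\hat{b}^{(m_{0})}/\Gamma_{A})$; restricting the witnessing graph automorphism to $D_{m_{0}} \subseteq D^{*} = \Gamma_{A}$ would yield $tp(\hat{a}/D_{m_{0}}) = tp(\hat{b}^{(m_{0})}/D_{m_{0}})$, contradicting the choice of $D_{m_{0}}$. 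Hence $tp(a/A) \neq tp(b/A)$, which is exactly $(\omega, |B|)$-tameness of $(\mathbf{K}_{*}, \preccurlyeq)$.

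The main obstacle is the second step: controlling the non-uniqueness of normal forms. One cannot equate a group type with a \emph{single} graph type, because a type-preserving map may realise $b$ through any of its reduced spellings, and these genuinely differ as vertex tuples up to commutations. The remedy is to pass to the finite disjunction over all spellings $\hat{b}^{(m)}$ and then, after applying graph tameness to each disjunct separately, to amalgamate the finitely many witnesses $D_{m}$ into a single countable strong submodel via Löwenheim–Skolem. Everything else is bookkeeping with the dictionary between $Aut(\mathfrak{M})$ and $Aut(\Gamma_{\mathfrak{M}})$ supplied by the hypothesis $Aut(\mathfrak{M}/A) \leq Aut(\Gamma_{\mathfrak{M}})$.
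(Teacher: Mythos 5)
Your proposal is correct and follows essentially the same route as the paper's proof: translate Galois-type equality over $C$ into the existence of an automorphism in $Aut(\Gamma_{\mathfrak{M}}/\Gamma_C)$ carrying a fixed normal form of $a$ to one of the finitely many normal forms of $b$, apply tameness of $(\mathbf{K}'_{*}, \leq)$ to each of these finitely many graph-type inequalities, and absorb the resulting countable witnesses into a single countable $A \preccurlyeq B$. The only cosmetic differences are that the paper treats single elements (remarking that the argument generalizes to tuples) and disposes of the case $|sp(a)| \neq |sp(b)|$ explicitly, whereas you handle tuples directly and let the length mismatch be absorbed into a vacuous disjunction.
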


	\begin{proof} We show the tameness of $(\mathbf{K}_{*}, \preccurlyeq)$ for elements, the argument generalizes to tuples. Let $B \in \mathbf{K}_{*}$ and $a, b$ elements in $\mathfrak{M}(\mathbf{K}_{*}, \preccurlyeq)$, and suppose that $tp(a/B) \neq tp(b/B)$. Notice that for every $\alpha \in Aut(\Gamma_{\mathfrak{M}})$ the following are equivalent:
	\begin{enumerate}[(i)]
	\item $\alpha(a) = b$;
	\item $\alpha$ restricted to $sp(a)$ is a bijection from $sp(a)$ into $sp(b)$ such that if $a_1 \cdots a_k$ is a normal form for $a$, then $\alpha(a_1) \cdots \alpha(a_k)$ is a normal form form $b$;
	\item $\alpha$ restricted to $sp(a)$ is a bijection from $sp(a)$ into $sp(b)$, and there exists a normal form $a_1 \cdots a_k$ for $a$, such that $\alpha(a_1) \cdots \alpha(a_k)$ is a normal form form $b$.
\end{enumerate}
Now, if $|sp(a)| \neq |sp(b)|$ then for any countable $A \preccurlyeq B$ we have that $tp(a/A) \neq tp(b/A)$, since by assumption $Aut(\mathfrak{M}/A) \leq Aut(\Gamma_{\mathfrak{M}})$. Suppose then that $|sp(a)| = |sp(b)|$, fix a normal form $a_1 \cdots a_k$ for $a$ and let $\{ b_1^j \cdots b_k^j : j < n \}$ be the set of normal forms for $b$. For every $j < n$ we must have that 
	$$tp((a_i)_{0 < i \leq k}/\Gamma_B) \neq tp((b_i^j))_{0 < i \leq k}/\Gamma_B),$$
where types are in the sense of $(\mathbf{K}'_{*}, \leq)$ . In fact, otherwise there is 			    $$\alpha \in Aut(\mathfrak{M}(\mathbf{K}'_{*}, \leq)/ \Gamma_B) = Aut(\Gamma_{\mathfrak{M}(\mathbf{K}_{*}, \preccurlyeq)}/B)$$
such that $\alpha(sp(a)) = sp(b)$ and $\alpha(a_1) \cdots \alpha(a_k)$ is a normal form form $b$, and so $tp(a/B) = tp(b/B)$. Thus, by the tameness of $(\mathbf{K}'_{*}, \leq)$, for every $j < n$ there is countable $\Gamma_{A_j} \leq \Gamma_B$ such that
	$$tp((a_i)_{0 < i \leq k}/\Gamma_{A_j}) \neq tp((b_i^j))_{0 < i \leq k}/\Gamma_{A_j}).$$
Let $A \preccurlyeq B$ be such that $\bigcup_{j < n} A_j \subseteq A$. Then $tp(a/A) \neq tp(b/A)$. In fact, otherwise there exists $\alpha \in Aut(\Gamma_{\mathfrak{M}}/A)$ such that $\alpha(sp(a)) = sp(b)$ and $\alpha(a_1) \cdots \alpha(a_k)$ is a normal form form $b$, and so there exists $j<n$ and $\alpha \in Aut(\mathfrak{M}(\mathbf{K}'_{*}, \leq)/ \Gamma_{A_j})$ mapping $(a_i)_{0 < i \leq k}$ to $(b_i^j)_{0 < i \leq k}$, which is a contradiction.
\end{proof}
		
	Let $\mathbf{K}'_0$ be the class of graphs satisfying the following requirements:
\begin{enumerate}[(1)]
\item $\Gamma$ has the star property;
\item $\Gamma$ is star-connected;
\item $\Gamma$ is triangle-free;
\item $\Gamma$ contains $C_4$ (the cycle of length $4$) as an (induced) subgraph.
\end{enumerate}
Let then $\mathbf{K}_0 = \left\{ A \in \mathbf{K} : \Gamma_A \in \mathbf{K}'_0 \right\}$.
Notice that because of Corollary \ref{first_strong_rig_cor}, every $A \in \mathbf{K}_0$ is strongly rigid. We ask that $\Gamma$ contains $C_4$ instead of simply $P_4$ because $C_4$ has the star property, while $P_4$ does not. The fact that $C_4$ embeds as an induced subgraph in every structure in $\mathbf{K}'_0$ will be useful in proving joint embedding from amalgamation. We need a lemma before proving the main theorem of this section.

	\begin{lemma}\label{extension_lemma} Let $\Gamma$ be triangle-free and such that it contains $C_4$ as an induced subgraph. By induction on $i < \omega$, define $\Gamma_i$ such that:
	\begin{enumerate}[(i)]
	\item $\Gamma_0 = \Gamma$;
	\item $\Gamma_{i+1}$ is the extension of $\Gamma_i$ following the condition: for every $a \neq b \in \Gamma_i$ if $a$ is not adjacent to $b$, then add $c$ such that $N(c) = \{a, b \}$.
\end{enumerate}
Then $\Gamma \leq \bigcup_{i < \omega} \Gamma_i = \Gamma^* \in \mathbf{K}'_0$.
\end{lemma}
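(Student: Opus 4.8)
The plan is to verify directly that $\Gamma^* = \bigcup_{i<\omega}\Gamma_i$ lies in $\mathbf{K}'_0$ — that it has the star property, is star-connected, is triangle-free and contains an induced $C_4$ — and moreover that $\Gamma \leq \Gamma^*$. The observation underlying everything is that the construction never inserts an edge between two vertices already present: at each stage we only add fresh vertices, each adjacent to exactly one non-adjacent pair of old vertices. Hence the adjacency relation restricted to $\Gamma_i$ is unchanged in every $\Gamma_j$ with $j \geq i$, so each $\Gamma_i$ is an \emph{induced} subgraph of $\Gamma^*$. In particular $\Gamma = \Gamma_0 \leq \Gamma^*$, and the induced copy of $C_4$ inside $\Gamma$ survives as an induced $C_4$ in $\Gamma^*$. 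This disposes at once of the requirement $\Gamma \leq \Gamma^*$ and of condition (4) defining $\mathbf{K}'_0$.

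Next I would handle triangle-freeness by a \emph{creation-rank} argument. Assign to each vertex the least $i$ for which it lies in $\Gamma_i$. If $x,y,z$ formed a triangle, let $z$ have maximal rank. If all three had rank $0$ this would be a triangle of $\Gamma$, contradicting the hypothesis; so $z$ has rank $k \geq 1$ and was added with neighbourhood exactly some non-adjacent pair $\{a,b\}$ of vertices of rank $< k$. Since $x$ and $y$ are strictly older than $z$ and adjacent to it, both edges $zx$ and $zy$ were already present at the creation of $z$, forcing $\{x,y\} = \{a,b\}$; but then $x$ and $y$ are non-adjacent, contradicting that they form an edge of the triangle. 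Thus $\Gamma^*$ is triangle-free.

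The combinatorial heart of the argument is the claim that \emph{every vertex of $\Gamma^*$ has at least two non-neighbours, and hence degree at least $2$}. For the two non-neighbours I would use the fixed induced $C_4 = p_1 p_2 p_3 p_4$: since $\Gamma^*$ is triangle-free, the neighbours of any $v$ among $\{p_1,\dots,p_4\}$ form an independent set of $C_4$, and the largest such have size $2$, so $v$ is non-adjacent to at least two of the $p_i$ when $v \notin C_4$; when $v = p_i$, one non-neighbour is its $C_4$-opposite and a second is the vertex added for the opposite non-edge (which stays non-adjacent to $v$ by the induced-subgraph stability above). The passage from ``two non-neighbours'' to ``degree $\geq 2$'' is exactly the construction: a non-neighbour $u$ of $v$ produces, at the next stage, a fresh vertex whose neighbourhood is $\{u,v\}$, and distinct non-neighbours give distinct such vertices, so $v$ acquires at least two distinct neighbours.

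Finally I would deduce the two remaining properties. For the star property, given $v \neq v'$: if $v$ and $v'$ are non-adjacent then $v \in st(v) \setminus st(v')$ immediately; if $v E v'$, choose (using degree $\geq 2$) a neighbour $u$ of $v$ with $u \neq v'$, and note that triangle-freeness forces $u$ to be non-adjacent to $v'$, so again $u \in st(v) \setminus st(v')$. For star-connectedness, fix $v$ and any $x,y \notin st(v)$: if they are adjacent the single edge is a path in $\Gamma^* - st(v)$; otherwise the construction supplies a vertex $c$ with $N(c) = \{x,y\}$, which (since $v \notin \{x,y\}$ and no new edges to old vertices ever appear) satisfies $c \neq v$ and is non-adjacent to $v$, giving the path $x\,c\,y$ inside $\Gamma^* - st(v)$. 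The main obstacle to watch is precisely that neighbourhoods keep growing across stages — later vertices may attach to earlier-added ones — so the arguments must rely only on the two stable features of the construction: that no edge is ever added between pre-existing vertices (used for induced stability and the creation-rank triangle argument), and that enough vertices are added to guarantee degree $\geq 2$ (used for both the star property and star-connectedness), the latter being where the presence of $C_4$ is essential.
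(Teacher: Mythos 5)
Your proof is correct and follows essentially the same route as the paper's: both verify the four defining conditions of $\mathbf{K}'_0$ directly, observing that the construction never inserts edges between pre-existing vertices (so $\Gamma$ and its $C_4$ sit inside $\Gamma^*$ as induced subgraphs and triangle-freeness is preserved) and using the freshly added vertices $c$ with $N(c)=\{a,b\}$ as the connecting witnesses for star-connectedness. The one place where your route genuinely differs is the star property: the paper, for arbitrary $a \neq b$, builds an explicit witness in two stages --- first a non-neighbour $x$ of $a$ supplied by the $C_4$, then the vertex $c_{a,x}$ added at the following stage, which lies in $st(a) - st(b)$ --- whereas you split on whether $v$ and $v'$ are adjacent and, in the adjacent case, combine a ``every vertex has degree at least $2$'' lemma with triangle-freeness. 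Both arguments rest on the same mechanism; yours packages the role of $C_4$ into a reusable degree bound and makes explicit where triangle-freeness enters, while the paper's produces a single uniform witness without the case split. You are also more scrupulous than the paper about triangle-freeness (your creation-rank argument) and about the stability of non-adjacency across stages, both of which the paper dismisses as obvious.
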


	\begin{proof} Obviously, $C_4 \leq \Gamma \leq \Gamma^*$ and $\Gamma^*$ is triangle-free. Regarding the star-property, let $a \neq b \in \Gamma^*$, we show that $st(a) \not\subseteq st(b)$. Assume $a, b \in \Gamma_i$. Then $\Gamma_{i+1} - \Gamma_i$ contains an element $x$ which is not adjacent to $a$ (since $C_4$ contains two adjacent vertices different from $a$).  Now, $\Gamma_{i+2} - \Gamma_{i+1}$ contains an element $c$ which is adjacent to $a$ and $x$, but not to $b$. Hence, $c \in st(a) - st(b)$, as wanted. Regarding star-connectedness, let $v \in \Gamma^*$ and $a \neq b \in \Gamma^* - st(v)$. Assume that $v,a,b \in \Gamma_i$. If $a$ and $b$ are adjacent in $\Gamma^*$, then they are connected in $\Gamma^* - st(v)$ (since $a \neq b \in \Gamma^* - st(v)$). If $a$ and $b$ are not adjacent in $\Gamma^*$, then they are not adjacent in $\Gamma_i$ either, and so at stage $\Gamma_{i+1}$ we have added $c$ such that $N(c) = \{a, b \}$, witnessing the connectedness of $a$ and $b$ in $\Gamma^* - st(v)$.
%	We show that $st(a) \not\subseteq st(b)$, this suffices. Let $i < \omega$ be such that $a \in \Gamma_i - \Gamma_{i-1}$. If $a$ and $b$ are not adjacent, then of course $a \in st(a) - st(b)$. If $a$ and $b$ are adjacent then there exists $a \neq x \neq b$ such that $x \in C_4$ and $a$ is not adjacent to $x$, and so at stage $\Gamma_{i+1}$ we have added $c$ such that $N(c)$ in $\Gamma_{i+1}$ is $\{a, x \}$. Thus, in $\Gamma^*$ we have $c \in st(a) - st(b)$. 
\end{proof}

	\begin{theorem}\label{first_finitary} $(\mathbf{K}_{0}, \preccurlyeq)$ is a finitary $\mathrm{AEC}$. 
\end{theorem}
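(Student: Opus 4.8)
The plan is to derive the statement from the general machinery in Theorems~\ref{almost_AEC_th} and~\ref{smoothness}, so that the only genuine work concerns the \emph{graph} class $(\mathbf{K}'_0, \leq)$. First I would note that every $A \in \mathbf{K}_0$ is strongly rigid: the four defining clauses of $\mathbf{K}'_0$ provide the star property together with the hypotheses of Theorem~\ref{first_strong_rig_th} (star-connectedness, triangle-freeness, and an induced $C_4$, which contains a copy of $P_4$), so Corollary~\ref{first_strong_rig_cor} applies. Hence the strong-rigidity hypothesis of Theorem~\ref{almost_AEC_th} is met, and it remains to exhibit $(\mathbf{K}'_0, \leq)$ as a class closed under unions of chains that has $\mathrm{AP}$, $\mathrm{JEP}$, $\mathrm{ALM}$ and $\mathrm{LS}(\mathbf{K}'_0, \leq) = \omega$.

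For closure under unions of chains I would check each defining clause directly, exploiting that $\leq$ is the \emph{induced} subgraph relation, so that both adjacency and non-adjacency between two vertices are decided at the finite stage where both first appear. Triangle-freeness and the presence of an induced $C_4$ pass to unions trivially; for the star property and star-connectedness, any finitely many vertices (a pair $v \neq v'$, or a triple $v, a, b$) lie in some $\Gamma_i$, and a witness $w \in st(v) \setminus st(v')$, respectively a connecting path in $\Gamma_i - st(v)$, survives into the union since no new adjacencies or non-adjacencies are created. For $\mathrm{ALM}$ I would start from a triangle-free graph of prescribed infinite cardinality $\lambda$ containing an induced $C_4$ and apply Lemma~\ref{extension_lemma}, which outputs a member of $\mathbf{K}'_0$ of the same cardinality. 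For $\mathrm{LS}(\mathbf{K}'_0, \leq) = \omega$ I would run an $\omega$-step closure: given $B \in \mathbf{K}'_0$ and $X \subseteq B$, close $X$ together with a fixed $C_4$ by repeatedly adjoining, for each pair $v \neq v'$, a witness $w \in st_B(v) \setminus st_B(v')$, and, for each triple $v, a, b$ with $a, b \notin st_B(v)$, a path joining $a, b$ inside $B - st_B(v)$. Since adjacency is inherited, the resulting induced subgraph $C \leq B$ lies in $\mathbf{K}'_0$ and has size $\leq |X| + \omega$.

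The main obstacle is the amalgamation property for $(\mathbf{K}'_0, \leq)$. Given $A \leq B_0$ and $A \leq B_1$ in $\mathbf{K}'_0$, I would first form the free amalgam $D = B_0 \cup_A B_1$, gluing along $A$ and placing no edges between $B_0 \setminus A$ and $B_1 \setminus A$. A short case analysis shows that $D$ is triangle-free, since any triangle would have to use a forbidden cross-edge, and $D$ still contains an induced $C_4$; however $D$ need not have the star property or be star-connected, because the two sides communicate only through $A$. The repair is to apply Lemma~\ref{extension_lemma} to $D$, obtaining $D^\ast \in \mathbf{K}'_0$ with $D \leq D^\ast$; then $B_0, B_1 \leq D \leq D^\ast$ are induced subgraphs agreeing on $A$, so $D^\ast$ is the desired amalgam. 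With $\mathrm{AP}$ in hand, $\mathrm{JEP}$ follows because $C_4 \in \mathbf{K}'_0$ embeds as an induced subgraph into every member of $\mathbf{K}'_0$ and hence serves as a common amalgamation base.

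Finally I would assemble the conclusion. By Theorem~\ref{almost_AEC_th}, $(\mathbf{K}_0, \preccurlyeq)$ satisfies all axioms of Definition~\ref{def_AEC} except possibly the Smoothness Axiom, has $\mathrm{AP}$, $\mathrm{JEP}$ and $\mathrm{ALM}$, and satisfies $\mathrm{LS}(\mathbf{K}_0, \preccurlyeq) = \mathrm{LS}(\mathbf{K}'_0, \leq) = \omega$. To supply Smoothness and finite character I would invoke Theorem~\ref{smoothness}: its hypotheses hold because every $\Gamma_A$ is triangle-free and because any basis $S$ of any $A \in \mathbf{K}_0$ contains an induced $C_4$, which provides precisely a path $P_4 = s_0 E s_1 E s_2 E s_3$ with $s_0$ not adjacent to $s_2$ and $s_1$ not adjacent to $s_3$. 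Combining the two theorems, $(\mathbf{K}_0, \preccurlyeq)$ is an $\mathrm{AEC}$ with $\mathrm{LS} = \omega$, $\mathrm{ALM}$, $\mathrm{AP}$, $\mathrm{JEP}$ and finite character, i.e.\ it is finitary.
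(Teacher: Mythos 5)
Your proposal is correct and follows essentially the same route as the paper: strong rigidity via Corollary~\ref{first_strong_rig_cor}, reduction to graph-level properties through Theorems~\ref{almost_AEC_th} and~\ref{smoothness} (with the induced $C_4$ supplying the required $P_4$), reduction of $\mathrm{JEP}$ to $\mathrm{AP}$ via the embedded $C_4$, and amalgamation by taking the free amalgam and closing it under Lemma~\ref{extension_lemma} --- exactly the paper's $D = (A \cup B)^\ast$. You simply spell out in more detail the closure-under-unions and L\"owenheim--Skolem verifications that the paper dismisses as obvious.
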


	\begin{proof} As already noticed, because of Corollary \ref{first_strong_rig_cor}, every $A \in \mathbf{K}_0$ is strongly rigid. Furthermore, obviously $(\mathbf{K}'_0, \leq)$ is closed under limits and $\mathrm{LS}(\mathbf{K}'_{0}, \leq) = \omega$. Also, every $A \in \mathbf{K}_0$ is such that $\Gamma_A$ is triangle-free and contains $C_4$ as an induced subgraph, and so we can always a $P_4$ as in Theorem \ref{smoothness}. Thus, by Theorems \ref{almost_AEC_th} and \ref{smoothness}, in order to conclude it suffices to show that $(\mathbf{K}'_{0}, \leq)$ has joint embedding and amalgamation. Now, $C_4 \in \mathbf{K}'_{0}$ and $C_4$ embeds as an induced subgraph in every $A \in \mathbf{K}'_0$, thus it suffices to prove amalgamation. Let then $A, B, C \in \mathbf{K}'_{0}$ be such that $C \leq A, B$ and $A \cap B = C$ (without loss of generality), and consider $D = (A \cup B)^*$. Then is it easy to see that $D$ is an amalgam of $A$ and $B$ over $C$.
%Let $A \leq A^*$ and $B \leq B^*$ be as in Lemma \ref{extension_lemma}, and $D = (V, E)$ the union of $A$ and $B$, i.e. $V = V_A \cup V_B$ and $aEb$ iff $a,b \in A$ and $aE_Ab$ or $a,b \in B$ and $aE_Bb$. Of course, $A, B \leq D$. We show that $D \in \mathbf{K}'_{0}$. Triangle-freeness and presence of $C_4$ as an induced subgraph are clear. For the rest of the proof fix $C_4$ in $C$. Regarding the star-property, let $a \neq b \in D$. We show that $st(a) \not\subseteq st(b)$, this suffices. If $a$ and $b$ are not adjacent, then of course $a \in st(a) - st(b)$. If $a$ and $b$ are adjacent then there exists $a \neq x \neq b$ such that $x \in C_4$ and $a$ is not adjacent to $x$. If $a \in A^*$ then there exists $c \in A^*$ such that $aEcEx$. If $a \in B^*$ then there exists $c \in B^*$ such that $aEcEx$. In either cases, in $D$ we have $c \in st(a) - st(b)$. Regarding star-connectedness, let $v \in D$ and $a \neq b \in D - st(v)$. The vertex $v$ can not be adjacent to every element of $C_4$, thus there exists $x \in C_4$ such that $x \in D - st(v)$. Now, if $a = x$ or $aEx$ then $a$ is of course connected to $x$. Otherwise, there exists $c \in D$ such that $aEcEx$. Thus, in either cases $a$ is connected to $x$. In a similar fashion one sees that $b$ is connected to $x$. Hence, $a$ is connected to $b$.
\end{proof}

	\begin{theorem}\label{firt_non_homog} \begin{enumerate}[(a)] 
	\item $(\mathbf{K}_{0}, \preccurlyeq)$ is not homogeneous.
	\item $(\mathbf{K}_0, \preccurlyeq)$ has the independence property, and thus it is unstable.
\end{enumerate}
\end{theorem}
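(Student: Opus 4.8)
My plan is to treat both parts by a common strategy: reduce each statement to a group-theoretic invariant of tuples in the monster model $\mathfrak{M} = \mathfrak{M}(\mathbf{K}_0, \preccurlyeq)$ (which exists by Theorem~\ref{first_finitary}) that is manifestly preserved by $Aut(\mathfrak{M})$, and then realise the required configurations inside $\mathfrak{M}$ using Lemma~\ref{extension_lemma} together with the universality of $\mathfrak{M}$. The single elementary observation used throughout is that for two distinct reflections (in particular generators) $c,d$ the product $cd$ has order $2$ when $c,d$ are adjacent in $\Gamma_{\mathfrak{M}}$ and infinite order otherwise, and that the order of a group element is an $Aut(\mathfrak{M})$-invariant.

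For part (b) I would take $A = \emptyset$ and set
$$ P = \{\, tp(c,d/\emptyset) : c,d \in \mathfrak{M},\ \mathrm{ord}(cd) = 2 \,\} \subseteq S_2(\emptyset).$$
Given $\alpha < |\mathfrak{M}|$, I form the bipartite graph on $\{a_i : i<\alpha\} \cup \{b_X : X \subseteq \alpha\}$ in which each part is independent and $b_X$ is adjacent to $a_i$ iff $i \in X$. This graph is triangle-free and contains $C_4$ (e.g.\ the induced cycle $a_0\,b_{\{0,1\}}\,a_1\,b_{\{0,1,2\}}$), so by Lemma~\ref{extension_lemma} it extends to a graph in $\mathbf{K}'_0$, which then embeds as an induced subgraph into $\Gamma_{\mathfrak{M}}$; since the closure only adds new vertices and never edges between old ones, the adjacency pattern among the $a_i$ and $b_X$ is preserved. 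Then $tp(b_X a_i/\emptyset) \in P$ iff $\mathrm{ord}(b_X a_i) = 2$ iff $b_X$ is adjacent to $a_i$ iff $i \in X$, where the left-to-right implication in the first equivalence uses that equal Galois types force equal orders of the corresponding products. This is exactly the independence property. Instability follows by the standard type count: taking $\alpha = \lambda$ and $M \preccurlyeq \mathfrak{M}$ of size $\lambda$ containing all the $a_i$ (possible since $\mathrm{LS}(\mathbf{K}_0,\preccurlyeq) = \omega$), the $2^\lambda$ elements $b_X$ realise pairwise distinct types over $M$ (if $i \in X \setminus X'$ then $b_X a_i$ and $b_{X'} a_i$ have different orders, yet $a_i \in M$), so $(\mathbf{K}_0,\preccurlyeq)$ is not $\lambda$-stable for any $\lambda$.

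For part (a) the plan is to exhibit two $\omega$-sequences $(a_i)_{i<\omega}$ and $(b_i)_{i<\omega}$ whose finite subtuples are pairwise $Aut(\mathfrak{M})$-conjugate but whose full types differ, the difference being detected by the invariant: \emph{is the subgroup generated by the sequence a parabolic subgroup of $\mathfrak{M}$} (in the sense of Definition~\ref{def_parabolic})? Because any automorphism of $\mathfrak{M}$ carries a basis to a basis, $\langle a_i : i<\omega\rangle \preccurlyeq \mathfrak{M}$ iff $\langle b_i : i<\omega\rangle \preccurlyeq \mathfrak{M}$ whenever the two sequences are conjugate, so this is a legitimate obstruction. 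I would take $(a_i)$ to be an honest independent set of basis vertices, so that $\langle a_i : i<\omega\rangle = W_{\{a_i\}}$ is visibly parabolic, and $(b_i)$ to be a ``spiralling'' family of reflections built in the spirit of the construction proving the failure of the Smoothness Axiom for $(\mathbf{K},\preccurlyeq)$ -- but arranged to be triangle-free, star-connected and to satisfy the star property, so as to live in $\mathbf{K}_0$ -- such that every finite $\{b_i : i \in X\}$ is conjugate to an independent set of vertices while $\langle b_i : i<\omega\rangle$ is not parabolic in $\mathfrak{M}$.

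The two hard points, which I would isolate as the crux, are both in part (a). First, one must produce the reflections $b_i$ so that non-parabolicity is \emph{robust}, i.e.\ survives inside the saturated $\mathfrak{M}$ and not merely in the small group where they are built; here I expect to argue directly, mimicking the commutation and length computation of the Smoothness counterexample (using Lemma~\ref{centralizers}, Theorem~\ref{normal_form} and the support calculus) to derive a contradiction from any hypothetical basis of $\mathfrak{M}$ containing a conjugate of all the $b_i$ simultaneously. Second, the equality of finite Galois types reduces to showing that any two finite independent sets of vertices of $\Gamma_{\mathfrak{M}}$ lie in a single $Aut(\mathfrak{M})$-orbit: each finite $\{b_i : i\in X\}$ is conjugate to an independent vertex set by an inner automorphism, and inner automorphisms belong to $Aut(\mathfrak{M})$, so matching $(b_i)_{i\in X}$ to $(a_i)_{i\in X}$ then reduces to this homogeneity statement, which I would establish from the amalgamation and model-homogeneity of $(\mathbf{K}'_0,\leq)$ by a finite back-and-forth. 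I expect this second point to be the most delicate, precisely because the tameness of $(\mathbf{K}_0,\preccurlyeq)$ is not available.
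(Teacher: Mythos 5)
Your part (b) is essentially the paper's own proof: the paper takes $P$ to be the set of $2$-types of commuting pairs (which for distinct generators is exactly your order-$2$ condition on the product) and realizes the same bipartite configuration $(t_i)_{i<\alpha}$, $(a_X)_{X\subseteq\alpha}$ inside $\Gamma_{\mathfrak{M}}$ via Lemma \ref{extension_lemma}. Your explicit type-count deriving instability is correct, though the paper leaves it implicit. So (b) stands.

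Part (a), however, is a plan rather than a proof: the two steps you yourself isolate as the crux are exactly where the content lies, and neither is executed. You never construct the reflections $b_i$, never verify that their finite subtuples are automorphic to independent vertex sets, and never prove that $\langle b_i : i<\omega\rangle$ is not parabolic in $\mathfrak{M}$ --- and this last point is genuinely delicate, since the paper's Smoothness counterexample, which you propose to mimic, lives on a graph containing an uncountable clique and cannot be transplanted into the triangle-free class $\mathbf{K}_0$ without real work. The paper's proof shows that the detour through parabolicity of the limit subgroup, and through the orbit-classification of finite independent sets, is unnecessary. It takes a half-graph: $(t_i)_{i<\omega}$ and $(a_i)_{i<\omega}$ independent sets with $a_i$ adjacent to $t_j$ iff $j\leq i$ (available by Lemma \ref{extension_lemma}), and sets $c_i = a_0\cdots a_{i-1}t_ia_{i-1}\cdots a_0$. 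For finite $X$ with maximum $k$, the \emph{single} inner automorphism determined by $a_0\cdots a_{k-1}$ carries $t_X$ to $c_X$ (the commutation pattern collapses the conjugator appropriately for each $i\in X$), so $tp(t_X/\emptyset)=tp(c_X/\emptyset)$ with no back-and-forth over independent sets needed. And no automorphism of $\mathfrak{M}$ carries $(t_i)_{i<\omega}$ to $(c_i)_{i<\omega}$: by strong rigidity any such automorphism would force a single $g\in\mathfrak{M}$ with $c_i=gt_ig^{-1}$ for all $i$ (distinct generators of a right-angled Coxeter group are never conjugate, so a basis containing all the $c_i$ must be $gSg^{-1}$ with $gt_ig^{-1}=c_i$), whence $g^{-1}a_0\cdots a_{i-1}\in C_{\mathfrak{M}}(t_i)=W_{st(t_i)}$ forces $a_0,\dots,a_{i-1}\in sp(g)$ for every $i$, contradicting finiteness of supports. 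You should replace your unspecified $(b_i)$ by these explicit $c_i$; as written, part (a) is not established.
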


	\begin{proof} We prove (a). Let $(t_i)_{i < \omega}$ and $(a_i)_{i < \omega}$ in  $\Gamma_{\mathfrak{M}}$, for $\mathfrak{M}$ the monster model of $(\mathbf{K}_{0}, \preccurlyeq)$, be such that the following conditions are met:
\begin{enumerate}[(i)]
\item $(t_i)_{i < \omega}$ is an independent set;
\item $(a_i)_{i < \omega}$ is an independent set;
\item for every $i < \omega$, $a_i$ is adjacent to $t_j$ iff $j \leq i$.
\end{enumerate}
Such sequences $(t_i)_{i < \omega}$ and $(a_i)_{i < \omega}$ can be found in $\Gamma_{\mathfrak{M}}$, e.g. using Lemma \ref{extension_lemma}. For $i < \omega$, let 
$$c_i = a_0 \cdots a_{i-1}t_ia_{i-1} \cdots a_{0}.$$
Then for every $X \subseteq_{fin} \omega$ we have $tp(t_X/\emptyset) = tp(c_X/\emptyset)$, as witnesses by the inner automorphism determined by $a_0 \cdots a_{k-1}$, for $k = max\{ i<\omega: i \in X\}$. On the other hand, $tp((t_i)_{i < \omega}/\emptyset) \neq tp((c_i)_{i < \omega}/\emptyset)$ because there is no automorphism of $\mathfrak{M}$ such that $t_i \mapsto c_i$ for every $i < \omega$, as this would contradict the strong rigidity of $\mathfrak{M}$, in fact no inner automorphism $gxg^{-1}$ (for $g \in \mathfrak{M}$) could serve as witness for this candidate automorphism, since $sp(g)$ is finite. We prove (b). Let 
$$P = \{ p \in S_2(\emptyset) : \forall a, b \in \mathfrak{M}, \text{ if } (a,b) \models p \text{ then } ab = ba \},$$ $\alpha < \card{\mathfrak{M}},$ and $(t_i)_{i < \alpha}$ and $(a_X)_{X \subseteq \alpha}$ in  $\Gamma_{\mathfrak{M}}$ be such that the following conditions are met:
\begin{enumerate}[(i)]
\item $(t_i)_{i < \alpha}$ is an independent set;
\item $(a_X)_{X \subseteq \alpha}$ is an independent set;
\item for every $X \subseteq \alpha$, $a_X$ is adjacent to $t_i$ iff $i \in X$.
\end{enumerate}
Such sequences $(t_i)_{i < \alpha}$ and $(a_X)_{X \subseteq \alpha}$ can be found in $\Gamma_{\mathfrak{M}}$, e.g. using Lemma \ref{extension_lemma}. Evidently, $tp(a_Xt_i/\emptyset) \in P$ if and only if $i \in X$.
\end{proof}

	\begin{remark} The first configuration used in the proof of Theorem \ref{firt_non_homog} will play a crucial role also in the proof of Theorem \ref{second_cate} (where a similar non-homogeneity result is proved). It is interesting to notice that the existence of this configuration (on tuples of elements), also known as the half-graph, can always be find in a definable way in the monster model of an unstable theory. Thus, we here have an analogy between non-homogeneity in AEC's and unstability in first-order theories. 
\end{remark}

	Given a graph $\Gamma = (V, E)$ we define the {\em barycentric subdivision} of $\Gamma$, denoted $\hat{\Gamma}$, to be the graph whose node set is the disjoint union of $V$ and $\{ c_{a,b} : a,b \in \Gamma, aEb \}$, and so that $N(c_{a,b}) = \{ a,b \}$ and, for $a \in V$, $N(a) = \{ c_{a,b} : b \in \Gamma, aEb \}$.
	Let $\mathbf{K}'_1$ be the class of barycentric subdivisions of clique with at least four elements, and 
	$\mathbf{K}_1 = \left\{ A \in \mathbf{K} : \Gamma(A) \in \mathbf{K}'_1 \right\}$.
	
	\begin{theorem}\label{K1_fin} $(\mathbf{K}_{1}, \preccurlyeq)$ is a finitary $\mathrm{AEC}$. 
\end{theorem}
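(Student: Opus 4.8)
The plan is to follow the template of Theorem \ref{first_finitary}: establish strong rigidity, verify the hypotheses of Theorem \ref{almost_AEC_th}, and verify the hypotheses of Theorem \ref{smoothness}. Throughout, write a member of $\mathbf{K}'_1$ as $\hat{K}_I$, the barycentric subdivision of the complete graph on an index set $I$ with $|I| \geq 4$; its vertex set is the disjoint union of the \emph{original} vertices $\{ v_i : i \in I \}$ (pairwise non-adjacent, each of degree $|I| - 1 \geq 3$) and the \emph{subdivision} vertices $\{ c_{i,j} : i \neq j \in I \}$ (pairwise non-adjacent, each of degree $2$, with $N(c_{i,j}) = \{ v_i, v_j \}$).

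First I would record that $\hat{K}_I$ is triangle-free (adjacencies run only between an original and a subdivision vertex), star-connected (removing $st(v_i)$ leaves $\hat{K}_{I - \{ i \}}$, while removing $st(c_{i,j})$ leaves $\hat{K}_{I - \{ i,j \}}$ together with pendant subdivision vertices attached to surviving originals, both connected since $|I| \geq 4$), contains $P_4$ via the path $v_1 E c_{1,2} E v_2 E c_{2,3}$, and has the star property (for distinct vertices one always exhibits an element of the first star missing from the second, using $|I| \geq 4$). Hence Corollary \ref{first_strong_rig_cor} applies and every $A \in \mathbf{K}_1$ is strongly rigid; this holds at arbitrary rank, as Theorem \ref{first_strong_rig_th} carries no finiteness hypothesis.

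The technical heart is a \emph{rigidity of embeddings} lemma, which I would prove next: if $\hat{K}_I \leq \hat{K}_J$ as an induced subgraph (both in $\mathbf{K}'_1$), then the embedding is standard, i.e. $I \subseteq J$ with $v_i \mapsto v_i$ and $c_{i,j} \mapsto c_{i,j}$. The argument is a degree count: an original vertex of $\hat{K}_I$ has degree $\geq 3$ already inside $\hat{K}_I$, whereas a subdivision vertex of $\hat{K}_J$ has degree $2$ in all of $\hat{K}_J$, hence degree $\leq 2$ in any induced subgraph; so originals map to originals, giving $I \subseteq J$, and then $c_{i,j}$, being adjacent to the images of $v_i$ and $v_j$, is forced to equal $c_{i,j}$ in $\hat{K}_J$. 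From this, closure of $(\mathbf{K}'_1, \leq)$ under limits is immediate: an increasing chain becomes $\hat{K}_{I_0} \leq \hat{K}_{I_1} \leq \cdots$ with $I_0 \subseteq I_1 \subseteq \cdots$, whose union is $\hat{K}_{\bigcup_k I_k}$ with $|\bigcup_k I_k| \geq 4$. The same lemma yields amalgamation — given $\hat{K}_I \leq \hat{K}_J, \hat{K}_K$ with $\hat{K}_J \cap \hat{K}_K = \hat{K}_I$, the graph $\hat{K}_{J \cup K}$ is an amalgam — and joint embedding, since every member of $\mathbf{K}'_1$ contains $\hat{K}_4$ (equivalently, amalgamate over $\hat{K}_4$). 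Arbitrarily large models follow from $|\hat{K}_\lambda| = \lambda$ for infinite $\lambda$, and $\mathrm{LS}(\mathbf{K}'_1, \leq) = \omega$, since any $X$ lies in $\hat{K}_I$ with $I$ the set of indices occurring among the vertices of $X$, padded to size $\geq 4$.

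With these in hand, Theorem \ref{almost_AEC_th} gives that $(\mathbf{K}_1, \preccurlyeq)$ satisfies axioms (1), (2), (3), (4.1), (4.2) and (5) of Definition \ref{def_AEC}, that $\mathrm{LS}(\mathbf{K}_1, \preccurlyeq) = \mathrm{LS}(\mathbf{K}'_1, \leq) = \omega$, and that $(\mathbf{K}_1, \preccurlyeq)$ has $\mathrm{AP}$, $\mathrm{JEP}$ and $\mathrm{ALM}$. For the Smoothness Axiom and finite character I would invoke Theorem \ref{smoothness}: each $\Gamma_A$ is triangle-free, and whenever $S$ is a basis for some $A \in \mathbf{K}_1$, then $\Gamma_A = \hat{K}_I$ contains the copy $v_1 E c_{1,2} E v_2 E c_{2,3}$ of $P_4$ with $v_1$ not adjacent to $v_2$ and $c_{1,2}$ not adjacent to $c_{2,3}$, which is exactly the configuration required. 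Combining, $(\mathbf{K}_1, \preccurlyeq)$ is an $\mathrm{AEC}$ with $\mathrm{LS} = \omega$, $\mathrm{AP}$, $\mathrm{JEP}$, $\mathrm{ALM}$ and finite character, i.e. finitary. The main obstacle is the rigidity-of-embeddings lemma: once it is secured, closure under limits, $\mathrm{AP}$, $\mathrm{JEP}$ and the $\mathrm{LS}$ computation all become routine, and the remainder is bookkeeping against Theorems \ref{almost_AEC_th} and \ref{smoothness}.
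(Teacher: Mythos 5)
Your proof is correct and follows exactly the paper's route: verify that every graph in $\mathbf{K}'_1$ is triangle-free, star-connected, has the star property and contains the required $P_4$ (hence strong rigidity via Corollary \ref{first_strong_rig_cor}), check that $(\mathbf{K}'_1,\leq)$ is closed under limits with $\mathrm{AP}$, $\mathrm{JEP}$, $\mathrm{ALM}$ and $\mathrm{LS}=\omega$, and then apply Theorems \ref{almost_AEC_th} and \ref{smoothness}. The only difference is that you spell out (via the degree-counting rigidity-of-embeddings lemma) the closure and amalgamation facts that the paper dismisses as obvious.
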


	\begin{proof} Obviously, $(\mathbf{K}'_{1}, \leq)$ is closed under limits, it has $\mathrm{AP}$, $\mathrm{JEP}$ and $\mathrm{ALM}$, and $\mathrm{LS}(\mathbf{K}'_{1}, \preccurlyeq) = \omega$. Also, it is immediate to see that every $\Gamma \in \mathbf{K}'_1$ is star-connected, it has the star property and it contains $P_4$, and so, by Corollary \ref{first_strong_rig_cor}, every $A \in \mathbf{K}_1$ is strongly rigid. Finally, it is obvious from the definition that for any graph $\Gamma$ the graph $\hat{\Gamma}$ is bipartite (and thus triangle-free). Hence, by Theorems \ref{almost_AEC_th} and \ref{smoothness} we are done.
\end{proof}

	Let $\mathbf{K}''_{1}$ be the class of infinite structures in $\mathbf{K}'_{1}$. It is immediate to see that the class $\mathbf{K}''_{1}$ is axiomatizable by the following first-order theory $T$:
	\begin{enumerate}[(A)]
	\item there are infinitely many elements;
	\item every $x$ has either exactly two neighbours or at least three neighbours;
	\item if $x$ has exactly two neighbours $y$ and $z$, then $y$ and $z$ have at least three neighbours; 
	\item if $x$ has at least three neighbours, then each neighbour of $x$ has exactly two neighbours; 
	\item if $x \neq y$ have at least three neighbours, then there exists unique $z$ such that $xEzEy$.
\end{enumerate}

	\begin{proposition}\label{elim_quant} $T$ is complete and it is model complete.
\end{proposition}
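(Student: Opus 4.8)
The plan is to first unravel the structure of an arbitrary model $M \models T$ and derive completeness from categoricity, and then to establish model completeness by verifying Robinson's test through an existential-closedness argument.

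First I would classify vertices: call a vertex of $M$ \emph{heavy} if it has at least three neighbours and \emph{light} if it has exactly two, so by (A)--(B) every vertex is exactly one of the two and there are infinitely many in all. Axiom (C) says the two neighbours of a light vertex are heavy, and axiom (D) says every neighbour of a heavy vertex is light; hence $M$ is bipartite with parts $B_M$ (heavy) and $S_M$ (light). Axiom (E) then shows that sending a light vertex to the unordered pair of its two neighbours is a bijection between $S_M$ and the $2$-element subsets of $B_M$: it is well defined with heavy values by (C), injective by the uniqueness clause of (E), and surjective by its existence clause (the common neighbour produced is light, being a neighbour of a heavy vertex). Thus $M$ is exactly the barycentric subdivision of the complete graph on $B_M$, and since $|S_M| = \binom{|B_M|}{2}$, axiom (A) forces $B_M$ to be infinite with $|M| = |B_M|$. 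Consequently any bijection $B_M \to B_N$ between the heavy parts of two models of the same infinite cardinality extends, through the bijection light $\leftrightarrow$ pair, to a graph isomorphism $M \cong N$, so $T$ is $\lambda$-categorical for every infinite $\lambda$. As $T$ has no finite models, the \L o\'s--Vaught test gives that $T$ is complete.

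For model completeness I would use Robinson's test: for all models $M \subseteq N$ of $T$ (induced subgraph), $M$ is existentially closed in $N$. The preliminary step is that the heavy/light partition is preserved. Heaviness passes upward since neighbours in $M$ remain neighbours in $N$. If $v \in M$ is light with neighbours $y,z$, then $y,z$ are heavy in $M$ hence heavy in $N$, so $v$ cannot be heavy in $N$, since a heavy vertex of $N$ has only light neighbours by (D) while $y$ is a heavy neighbour of $v$; thus $v$ stays light with the same two neighbours. In particular $B_M = B_N \cap M$, and by uniqueness in (E) applied in $N$, any light vertex of $N$ whose two neighbours both lie in $M$ already lies in $M$. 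Now, given an existential formula $\exists \bar y\, \psi(\bar a, \bar y)$ with $\bar a$ from $M$ realized in $N$ by witnesses $\bar b$, I would map the heavy entries of $\bar b$ outside $M$ injectively to fresh heavy vertices of $M$ (available as $B_M$ is infinite and only finitely many are needed), fix those already in $M$, and send each light entry to the unique $M$-common-neighbour of the images of its two heavy neighbours, which exists by (E). Since every adjacency of a light vertex is exhausted by its two heavy neighbours and distinct heavy pairs yield distinct light vertices, this map preserves adjacency, non-adjacency, equality and inequality on $\bar a\bar b$; hence $\psi(\bar a,\bar b')$ holds in $M$, so $M \models \exists \bar y\, \psi(\bar a, \bar y)$, and Robinson's test yields model completeness.

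The main obstacle is the bookkeeping in this last step: one must check that the configuration induced on $\bar a\bar b$ in $N$ is faithfully reproduced in $M$, in particular that a light witness whose two heavy neighbours both already lie in the image is, by the preservation remark above, forced to coincide with the corresponding $M$-common-neighbour, so that no adjacency or equality constraint is violated. Once the rigidity of the bipartite ``light $\leftrightarrow$ pair of heavy'' structure is in hand, this verification is routine.
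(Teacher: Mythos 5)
Your proof is correct, and it is precisely the standard argument the paper has in mind when it dismisses this proposition with the single word ``Standard'': completeness via total categoricity of the ``barycentric subdivision of an infinite clique'' structure plus the \L o\'s--Vaught test, and model completeness via Robinson's test using the rigidity of the heavy/light bipartition. The one point that genuinely needs care --- that a light vertex of $N$ whose two heavy neighbours lie in $M$ must itself lie in $M$, so no collision or adjacency clash arises when relocating witnesses --- is exactly the point you isolate and resolve.
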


	\begin{proof} Standard.
\end{proof}

	\begin{theorem}\label{K1_tame} $(\mathbf{K}_{1}, \preccurlyeq)$ is tame. 
\end{theorem}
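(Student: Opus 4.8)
The plan is to verify the hypotheses of Theorem \ref{tameness} for the pair $(\mathbf{K}_1, \preccurlyeq)$, taking $\mathbf{K}'_* = \mathbf{K}'_1$. Much of the work is already in place: by Theorem \ref{K1_fin}, $(\mathbf{K}_1, \preccurlyeq)$ is a finitary $\mathrm{AEC}$ (so $\mathrm{LS}(\mathbf{K}_1,\preccurlyeq)=\omega$ and it enjoys $\mathrm{AP}$, $\mathrm{JEP}$, $\mathrm{ALM}$), every $A \in \mathbf{K}_1$ is strongly rigid, and $(\mathbf{K}'_1, \leq)$ is an $\mathrm{AEC}$ with $\mathrm{AP}$, $\mathrm{JEP}$ and $\mathrm{ALM}$. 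Writing $\Gamma_{\mathfrak{M}} = \mathfrak{M}(\mathbf{K}'_1, \leq)$, which by the discussion preceding Theorem \ref{tameness} we may assume to be the barycentric subdivision $\widehat{K}_\kappa$ of a clique $K_\kappa$ of the size of the monster, only two hypotheses remain: (i) that $(\mathbf{K}'_1, \leq)$ is tame, and (ii) the orbit condition $Aut(\mathfrak{M}/A) \leq Aut(\Gamma_{\mathfrak{M}})$ for every $A \preccurlyeq \mathfrak{M}$.

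For (i) I would exploit Proposition \ref{elim_quant}: the theory $T$ of the infinite members of $\mathbf{K}'_1$ is complete and model complete. Since $\Gamma_{\mathfrak{M}}$ is model-homogeneous and models $T$, model completeness turns every $B \leq \Gamma_{\mathfrak{M}}$ with $B \models T$ into an elementary substructure, so Galois types over an infinite $B$ coincide with first-order types and are therefore determined by finitely many parameters; this yields tameness (indeed $(\omega,\lambda)$-tameness) at once. Concretely, a vertex of degree two (a barycenter $c_{a,b}$) has its type over $B$ decided by which of its at most two neighbours lie in $B$, while any original vertex $v \notin B$ has no neighbour in $B$ at all, since its neighbours are the $c_{v,w}$, none of which can belong to the subdivision $B = \widehat{K}_U$ unless $v \in U$. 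Hence any type distinction is already witnessed inside a countable $\widehat{K}_{U'} \leq B$ with $U' \subseteq U$.

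Step (ii) is the point requiring genuine computation, and I expect it to be the \emph{main obstacle}. Fix $A \preccurlyeq \mathfrak{M}$ with basis $S_0 \subseteq S$, where $S$ is the vertex set of $\Gamma_{\mathfrak{M}} = \widehat{K}_\kappa$, and let $\alpha \in Aut(\mathfrak{M}/A)$. Since $\mathfrak{M}$ is strongly rigid, Proposition \ref{castella_remark} together with Tits' decomposition gives $Aut(\mathfrak{M}) = Inn(\mathfrak{M}) \rtimes Aut(\Gamma_{\mathfrak{M}})$, so write $\alpha = \iota_g \circ \sigma$ with $\iota_g(x) = gxg^{-1}$, $g \in \mathfrak{M}$, and $\sigma \in Aut(\Gamma_{\mathfrak{M}})$. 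For each $s \in S_0$ we have $s = \alpha(s) = g\,\sigma(s)\,g^{-1}$, so $\sigma(s) \in S$ is a generator conjugate to $s$. Using that the abelianization of a right-angled Coxeter group is $\bigoplus_{t \in S} \mathbb{Z}/2\mathbb{Z}$, distinct generators are never conjugate, whence $\sigma(s) = s$ and $g$ commutes with $s$; thus $g \in C_{\mathfrak{M}}(s) = W_{st(s)}$ by Lemma \ref{centralizers}, i.e. $sp(g) \subseteq st(s)$ for every $s \in S_0$.

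It then remains to compute $\bigcap_{s \in S_0} st(s)$ in $\widehat{K}_\kappa$. Since $A \in \mathbf{K}_1$, the clique $U$ underlying $A$ has at least four elements, so $S_0$ contains three distinct original vertices $a, b, c$. In $\widehat{K}_\kappa$ one has $st(a) \cap st(b) = \{c_{a,b}\}$ while $c_{a,b} \notin st(c)$, whence $\bigcap_{s \in S_0} st(s) = \emptyset$. Therefore $sp(g) = \emptyset$, i.e. $g = e$ and $\alpha = \sigma \in Aut(\Gamma_{\mathfrak{M}})$, establishing (ii). With (i) and (ii) in hand, and the remaining hypotheses supplied by Theorem \ref{K1_fin}, Theorem \ref{tameness} delivers the tameness of $(\mathbf{K}_1, \preccurlyeq)$. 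The only delicate points are the conjugacy-of-generators step and the star-intersection computation, both elementary once the combinatorics of the barycentric subdivision is unwound.
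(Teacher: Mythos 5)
Your proposal is correct and follows the paper's overall strategy: reduce to Theorem \ref{tameness}, and obtain tameness of $(\mathbf{K}'_1,\leq)$ from Proposition \ref{elim_quant} by observing that completeness and model completeness turn $\leq$ into the elementary submodel relation on the infinite members of $\mathbf{K}'_1$, for which tameness is standard. Where you genuinely diverge is in the verification of the hypothesis $Aut(\mathfrak{M}/A)\leq Aut(\Gamma_{\mathfrak{M}})$. The paper gets this in one line from Lemma \ref{lemma_for_smooth}: an automorphism fixing $A$ pointwise fixes a basis $S_A$ of $A$ contained in the standard basis $T$ of $\mathfrak{M}$, so $T$ and $\alpha(T)$ are two bases whose intersection contains a $P_4$ of the required shape (which $\widehat{K}_U$ with $|U|\geq 4$ always supplies, e.g. $c_{a,c}\,E\,a\,E\,c_{a,b}\,E\,b$, with no common neighbour of the first two vertices), whence $\alpha(T)=T$ and $\alpha$ restricts to a graph automorphism. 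You instead decompose $\alpha=\iota_g\circ\sigma$ via Tits' theorem combined with Proposition \ref{castella_remark} and strong rigidity of $\mathfrak{M}$, kill $\sigma$ on the basis of $A$ by non-conjugacy of distinct generators (the abelianization argument is valid in the right-angled case), and then force $g=e$ from Lemma \ref{centralizers} together with the computation that three stars in a barycentric subdivision of a clique intersect trivially. Both arguments are sound; yours is self-contained and makes the mechanism of strong rigidity explicit, while the paper's recycles the lemma already established for smoothness and finite character and so is shorter. A minor point common to both treatments: one must fix the convention that the basis of $A$ sits inside the standard basis of $\mathfrak{M}$, as licensed by the discussion preceding Theorem \ref{tameness}.
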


	\begin{proof} Obviously, $(\mathbf{K}'_{1}, \leq)$ is an $\mathrm{AEC}$ with $\mathrm{AP}$, $\mathrm{JEP}$ and $\mathrm{ALM}$. Furthermore, by Lemma \ref{lemma_for_smooth}, for every $A \in \mathbf{K}_1$, $Aut(\mathfrak{M}/A) \leq Aut(\Gamma_{\mathfrak{M}})$. Thus, by Theorem \ref{tameness}, it suffices to show that $(\mathbf{K}'_{1}, \leq)$ is tame. Clearly, it suffices to prove tameness for the class $\mathbf{K}''_{1}$ of infinite structures in $\mathbf{K}'_{1}$. By Proposition \ref{elim_quant}, the class $\mathbf{K}''_{1}$ is axiomatizable by a complete first-order theory which is model complete. Thus, $(\mathbf{K}''_{1}, \leq) = (\mathbf{K}''_{1}, \preccurlyeq^*)$, where $\preccurlyeq^*$ denotes the elementary submodel relation of first-order logic, and clearly $(\mathbf{K}''_{1}, \preccurlyeq^*)$ is tame.
\end{proof}
	
	\begin{theorem}\label{first_cate} $(\mathbf{K}_{1}, \preccurlyeq)$ is uncountably categorical.
\end{theorem}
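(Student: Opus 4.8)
The plan is to reduce the statement to a counting of the graphs in $\mathbf{K}'_1$ at each uncountable cardinal, using Castella's rigidity theorem to pass from group isomorphism to graph isomorphism. By the rigidity of right-angled Coxeter groups, for $A, B \in \mathbf{K}$ one has $A \cong B$ as groups if and only if $\Gamma_A \cong \Gamma_B$ as graphs; hence it suffices to show that for every uncountable $\kappa$ there is, up to isomorphism, exactly one $A \in \mathbf{K}_1$ with $|A| = \kappa$, and that such an $A$ exists.

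First I would pin down the cardinality bookkeeping. Every member of $\mathbf{K}'_1$ is of the form $\hat{K_\mu}$, the barycentric subdivision of a clique $K_\mu$ on $\mu \geq 4$ vertices: it has $\mu$ original nodes together with one node $c_{a,b}$ for each edge of $K_\mu$, of which there are $\mu$ many when $\mu$ is infinite, so $|\hat{K_\mu}| = \mu$ for infinite $\mu$ and $|\hat{K_\mu}|$ is finite for finite $\mu$. Since two distinct original nodes of $\hat{K_\mu}$ are never adjacent, $\hat{K_\mu}$ is never a complete graph, so the associated Coxeter group $W_{\hat{K_\mu}}$ is always infinite. Moreover, for an infinite Coxeter generating set $S$ every element is a finite word over $S$, whence $|W_S| = |S|$, while a finite $S$ yields a countable group. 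Consequently $|W_{\hat{K_\mu}}| = \mu$ when $\mu$ is infinite, and $|W_{\hat{K_\mu}}| = \aleph_0$ when $\mu$ is finite.

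Next I would read off categoricity. Fix an uncountable $\kappa$ and let $A \in \mathbf{K}_1$ with $|A| = \kappa$. Then $\Gamma_A = \hat{K_\mu}$ cannot come from a finite clique, since those produce only countable groups; hence $\mu$ is infinite and $\kappa = |A| = |W_{\hat{K_\mu}}| = \mu$, so $\Gamma_A = \hat{K_\kappa}$. As a clique of cardinality $\kappa$ is unique up to isomorphism and the barycentric subdivision is a canonical construction, $\hat{K_\kappa}$ is the unique member of $\mathbf{K}'_1$ whose Coxeter group has power $\kappa$, up to graph isomorphism. Applying rigidity once more yields $A \cong W_{\hat{K_\kappa}}$; since $W_{\hat{K_\kappa}}$ does have power $\kappa$, all models of size $\kappa$ exist and are mutually isomorphic, which is exactly uncountable categoricity.

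The only genuinely delicate point is the cardinality arithmetic linking the three sizes in play -- the clique $\mu$, the graph $\hat{K_\mu}$, and the group $W_{\hat{K_\mu}}$ -- and in particular verifying that finite cliques contribute only the single countable model and so never interfere with an uncountable target cardinal. Everything else is a direct appeal to rigidity together with the canonicity of the barycentric subdivision. (One could instead route through the first-order theory $T$ of Proposition \ref{elim_quant}, which is in fact $\lambda$-categorical for every infinite $\lambda$, and transfer via the equality of group and graph cardinalities; but the graph-level count above is the cleaner path.)
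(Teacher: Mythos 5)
Your argument is correct and is essentially the paper's own proof: the paper likewise reduces uncountable categoricity to the chain $|A|=|B|$ iff $|\Gamma_0|=|\Gamma_1|$ iff $\Gamma_0\cong\Gamma_1$ iff $\Gamma_A\cong\Gamma_B$ iff $A\cong B$, using rigidity and the fact that a clique is determined by its cardinality. Your version just spells out the cardinality bookkeeping (and the exclusion of finite cliques) a little more explicitly.
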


	\begin{proof} For uncountable $A, B \in \mathbf{K}_{1}$, letting $\Gamma_A = \hat{\Gamma}_0$ and $\Gamma_B = \hat{\Gamma}_1$ (for $\Gamma_0$ and $\Gamma_1$ cliques), we have $|A| = |B|$ iff $|\Gamma_A| = |\Gamma_B|$ iff $|\Gamma_0| = |\Gamma_1|$ iff $\Gamma_0 \cong \Gamma_1$ iff $\Gamma_A \cong \Gamma_B$ iff $A \cong B$.
\end{proof}

	\begin{corollary} $(\mathbf{K}_{1}, \preccurlyeq)$ is stable in every infinite cardinality.
\end{corollary}

	\begin{proof} This is a consequence of Theorems \ref{first_finitary}, \ref{first_cate} and \ref{cate_implies_stability}.
\end{proof}

	\begin{corollary} $\mathbf{K}_1 = Mod(\theta)$ for some $\theta \in L_{\omega_1, \omega}$. Furthermore, for every $A \preccurlyeq \mathfrak{M}$ we have $tp_{\omega_{1}, \omega}(a/A) = tp_{\omega_{1}, \omega}(b/A) \text{ iff } tp(a/A) = tp(b/A)$.
\end{corollary}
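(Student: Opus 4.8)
The plan is to apply the two Kueker results quoted earlier, Theorems~\ref{kueker1} and~\ref{kueker2}, to the class $(\mathbf{K}_1, \preccurlyeq)$. For the first assertion, I would invoke Theorem~\ref{kueker1}. By Theorem~\ref{K1_fin} the class $(\mathbf{K}_1, \preccurlyeq)$ is a finitary $\mathrm{AEC}$, and its vocabulary (that of groups) is countable, so the hypotheses of Theorem~\ref{kueker1} are met once I verify the cardinality count on models. Here the key input is uncountable categoricity (Theorem~\ref{first_cate}): for every uncountable $\lambda$ there is exactly one model of cardinality $\lambda$ up to isomorphism. In particular, fixing some uncountable $\lambda$, the class contains exactly one (hence at most $\lambda$) models of cardinality $\lambda$, which gives $\mathbf{K}_1 = Mod(\theta)$ for some $\theta \in L_{\infty, \omega}$. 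To sharpen this to $L_{\omega_1, \omega}$ I would take $\lambda = \omega_1$ in the second clause of Theorem~\ref{kueker1}: the class has at most $\omega_1$ models of cardinality $\omega_1$ (exactly one, by categoricity) and at most $\omega_1$ models of cardinality $< \omega_1 = \aleph_1$, i.e. countable models, of which there are at most continuum-many but in fact at most countably many since a barycentric subdivision of a finite or countable clique is determined up to isomorphism by the cardinality of that clique. This yields $\theta \in L_{\omega_1^+, \omega}$, and I would argue the bound can be taken in $L_{\omega_1, \omega}$ by noting the explicit finitary/first-order description of $\mathbf{K}_1''$ in Proposition~\ref{elim_quant} together with the single sentence distinguishing finite from infinite cliques.

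For the second assertion, I would apply Theorem~\ref{kueker2}. That theorem requires $(\mathbf{K}_1, \preccurlyeq)$ to be a finitary and tame $\mathrm{AEC}$ with countable vocabulary which is in addition $\omega$-stable. Finitarity is Theorem~\ref{K1_fin}, tameness is Theorem~\ref{K1_tame}, and the vocabulary is countable. For $\omega$-stability I would appeal to the Corollary just above the target statement, which establishes stability in every infinite cardinality as a consequence of Theorems~\ref{first_finitary}, \ref{first_cate} and~\ref{cate_implies_stability}; in particular the class is stable in $\aleph_0$, i.e. $\omega$-stable. With all hypotheses verified, Theorem~\ref{kueker2} delivers exactly the conclusion that for every $A \preccurlyeq \mathfrak{M}$ and all $a, b \in \mathfrak{M}^{< \omega}$ we have $tp_{\omega_{1}, \omega}(a/A) = tp_{\omega_{1}, \omega}(b/A)$ if and only if $tp(a/A) = tp(b/A)$.

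The proof is thus essentially a bookkeeping exercise: assemble the already-established properties of $(\mathbf{K}_1, \preccurlyeq)$ and feed them into the two black-box theorems of Kueker. The one genuinely nonroutine point, and the step I expect to be the main obstacle, is controlling the number of \emph{countable} models in order to push the logic from $L_{\omega_1^+, \omega}$ down to $L_{\omega_1, \omega}$. This is where I would use the concrete structure theory of $\mathbf{K}_1$: every $\Gamma_A \in \mathbf{K}_1'$ is the barycentric subdivision $\hat{\Gamma}$ of a clique $\Gamma$, and such a graph is determined up to isomorphism by $|\Gamma|$, so the countable models form a set indexed by the finite and countably infinite cardinals, giving only countably many of them. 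Once that count is in hand the cardinality hypotheses of Theorem~\ref{kueker1} are satisfied with $\lambda = \omega_1$ and the desired $L_{\omega_1, \omega}$-bound follows.
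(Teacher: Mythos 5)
Your overall strategy is the paper's: feed the already-established properties of $(\mathbf{K}_1,\preccurlyeq)$ into Kueker's two theorems. The second half of your argument (Theorem~\ref{kueker2} applied via finitarity, tameness, countable vocabulary, and $\omega$-stability obtained from uncountable categoricity through Theorem~\ref{cate_implies_stability}) is correct and is exactly what the paper does.

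The first half, however, has a concrete misstep in how you instantiate Theorem~\ref{kueker1}. You take $\lambda=\omega_1$, and the second clause of that theorem then only yields $\theta\in L_{\lambda^+,\omega}=L_{\omega_2,\omega}$, one level above what the statement claims; you notice this and gesture at a repair via Proposition~\ref{elim_quant}, but that proposition axiomatizes the class of \emph{graphs} $\mathbf{K}''_1$, not the class of \emph{groups} $\mathbf{K}_1$, so as written the patch does not close the gap. The correct (and simpler) move is to instantiate Theorem~\ref{kueker1} at $\lambda=\omega$: you have already made the key observation that $\mathbf{K}_1$ has at most countably many countable models (a barycentric subdivision of a clique is determined by the cardinality of the clique), and there are no finite models at all since every group in $\mathbf{K}_1$ contains an infinite dihedral subgroup; hence $\mathbf{K}_1$ has at most $\omega$ models of cardinality $\omega$ and at most $\omega$ models of cardinality $<\omega$, and the second clause gives $\theta\in L_{\omega^+,\omega}=L_{\omega_1,\omega}$ directly. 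This is precisely the paper's ``easy observation that $\mathbf{K}_1$ has at most countably many countable models''; you had the observation in hand but plugged it into the wrong slot of the theorem.
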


	\begin{proof} This is an immediate consequence of Theorems \ref{kueker1}, \ref{kueker2}, \ref{K1_fin}, \ref{K1_tame} and \ref{first_cate} together with the easy observation that $\mathbf{K}_1$ has at most countably many countable models.
\end{proof}

\section{Centered Right-Angled Coxeter Groups}\label{centered_groups}

	Theorems \ref{firt_non_homog} and \ref{first_cate} leave open the question of finding classes of right-angled Coxeter groups which are stable and non-homogeneous. In this section we use Corollary \ref{second_strong_rig_cor} to achieve this. Let $C^*$ be the graph on vertex set $\{ s,s'\} \cup \{ t_i : i < 4 \}$, with the following edge relation: $t_0Et_2Et_3Et_1Et_0$, $t_0EsEt_2$, $t_1Es'Et_3$ and $sEs'$ (cf. Figure \ref{myfigure}). For every $B_T \models T = Th(\mathbb{N}, s, 0)$ (where $s$ denotes the successor function) we define a graph $\Gamma_{B_{T}} = (C^* \cup B_T, E)$ in the following way (without loss of generality we assume $s^n(0) = n$ in $B_T$):
\begin{enumerate}[(1)]
\item $C^*$ is an induced subgraph of $\Gamma_{B_{T}}$;
\item $t_0$ and $t_2$ are adjacent to all the even numbers in $\mathbb{N}$;
\item $t_1$ is adjacent to $2$ and to all the odd numbers in $\mathbb{N}$;
\item $t_3$ is adjacent to all the odd numbers in $\mathbb{N}$;
\item $N(0) - \mathbb{N} = \{ 1 \}$ and, for every $0 < n \in \mathbb{N}$, $N(n) \cap \mathbb{N} = \{ n-1, n+1 \}$;
\item for every copy $Z$ of $\mathbb{Z}$ in $B_T$ and $b \in Z$, $N(b) \cap Z = \{ b-1, b+1 \}$;
\item for every copy $Z$ of $\mathbb{Z}$ in $B_T$, there exists $0_Z \in Z$ such that for every $\pm n_Z = 0_Z \pm n$ we have $N(\pm n_Z) \cap \mathbb{N} = \{ 0, ..., n \}$;
\item for every copy $Z$ of $\mathbb{Z}$ in $B_T$, $0_Z$, $-1_Z$ and $1_Z$ are adjacent to $t_3$.
\end{enumerate}
Let $\mathbf{K}'_2$ be the class of graphs $\Gamma'$ isomorphic to one of the graphs $\Gamma = (C^* \cup B_T, E)$ described above, and $\mathbf{K}_2 = \left\{ A \in \mathbf{K} : \Gamma_A \in \mathbf{K}'_2 \right\}$. 

	\begin{remark} The proof of the theorem below is straightforward, but the details are tiresome. We include them for completeness of exposition.
\end{remark}

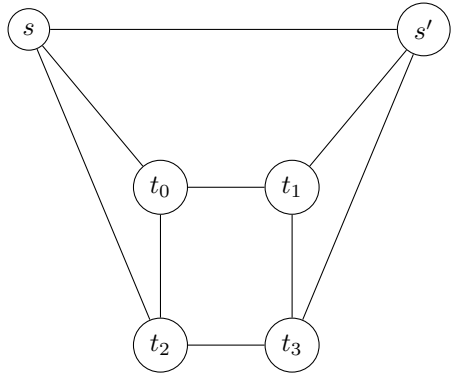
\begin{figure}
\begin{tikzpicture}
  %[scale=.8,auto=left,every node/.style={circle,fill=blue!20}]
  %\node (n6) at (1,10) {6};
  %\node (n5) at (8,9)  {5};
  %\node (n1) at (10,8) {1};
  %\node (n2) at (9,6)  {2};
  [scale=.7,auto=left,every node/.style={circle, fill=none,draw}]
  \node (n-1) at (2.5,11)  {$s$};
  \node (n0)  at (10,11)  {$s'$};
  \node (n1)  at (5,8)  {$t_0$};
  \node (n2)  at (5,5)  {$t_2$};
  \node (n3)  at (7.5,5)  {$t_3$};
  \node (n4)  at (7.5,8)  {$t_1$};

  \foreach \from/\to in  {n-1/n0, n-1/n1, n-1/n2, n0/n4, n0/n3, n1/n2, n1/n4, n2/n3, n3/n4}
  %{n6/n4,n4/n5,n5/n1,n1/n2,n2/n5,n2/n3,n3/n4}
  \draw (\from) -- (\to);
\end{tikzpicture}\caption{The graph $C^*$.}
\end{figure}\label{myfigure}
	
	\begin{theorem}\label{second_finitary}\label{K2_fin} $(\mathbf{K}_{2}, \preccurlyeq)$ is a finitary $\mathrm{AEC}$.
\end{theorem}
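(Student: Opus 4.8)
The plan is to run the same two-step reduction used for $(\mathbf{K}_0,\preccurlyeq)$ and $(\mathbf{K}_1,\preccurlyeq)$: apply Theorem \ref{almost_AEC_th} to transfer the $\mathrm{AEC}$ axioms and the properties $\mathrm{AP}$, $\mathrm{JEP}$, $\mathrm{ALM}$ from the class of graphs $(\mathbf{K}'_2,\leq)$ to $(\mathbf{K}_2,\preccurlyeq)$, thereby reducing everything to (a) the strong rigidity of each member of $\mathbf{K}_2$ and (b) the good behaviour of $(\mathbf{K}'_2,\leq)$. The essential difference from the previous two classes is that $\Gamma_{B_T}$ is \emph{not} triangle-free: the vertices $s,t_0,t_2$ already form a triangle in $C^*$ (as do $s',t_1,t_3$). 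Hence Theorem \ref{smoothness} is unavailable, and I would instead verify the Smoothness Axiom (4.3) and finite character by hand, using the rigid finite piece $C^*$ as an anchor in place of the $P_4$ of Lemma \ref{lemma_for_smooth}.

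First I would establish that every $A\in\mathbf{K}_2$ is strongly rigid by checking the hypotheses of Corollary \ref{second_strong_rig_cor}, i.e.\ of Theorem \ref{second_strong_rig_th} together with the star property. Since $\Gamma_{B_T}$ is always infinite ($\mathbb{N}\subseteq B_T$), I take the distinguished pair to be the vertices $s,s'$ of $C^*$; a direct computation gives $st(s)\cup st(s')=C^*$, so condition (b) reduces to checking that the fixed finite graph $C^*$ is star-connected, and condition (c) reduces to observing that each vertex of $B_T$ has a non-neighbour in $C^*$ (e.g.\ $s$), and similarly for the vertices of $C^*$ themselves. The genuinely laborious points are the star-connectedness of the whole graph $\Gamma_{B_T}$ and its star property: both call for a case analysis according to whether the removed (resp.\ compared) vertex lies in $C^*$, in the $\mathbb{N}$-part, or in a $\mathbb{Z}$-copy, using the parity edges to the $t_i$ and the edges to $s,s'$ to keep $\Gamma_{B_T}-st(v)$ connected and to block any inclusion $st(v)\subseteq st(v')$. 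This combinatorial verification is where the ``tiresome details'' live and is the main obstacle.

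Next I would treat $(\mathbf{K}'_2,\leq)$. Up to the fixed finite decoration $C^*$, this class is a faithful graph encoding of the models of the complete theory $T=Th(\mathbb{N},s,0)$ (the successor direction, the constant $0$, the parity, and the distinguished $0_Z$ of each $\mathbb{Z}$-copy are all recoverable from the graph); since $T$ is model complete, induced-subgraph inclusion between two members of $\mathbf{K}'_2$ coincides with the relation of elementary submodel of $T$. From the standard model theory of the successor function I then read off: closure under limits (by Tarski--Vaught, unions of elementary chains are models of $T$), $\mathrm{ALM}$ and $\mathrm{LS}(\mathbf{K}'_2,\leq)=\omega$ (models of every infinite cardinality exist and have countable elementary submodels), $\mathrm{JEP}$ (completeness of $T$) and $\mathrm{AP}$ (elementary amalgamation over models, valid for every complete theory). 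Theorem \ref{almost_AEC_th} then yields conditions (1), (2), (3), (4.1), (4.2), (5) of Definition \ref{def_AEC}, the value $\mathrm{LS}(\mathbf{K}_2,\preccurlyeq)=\omega$, and $\mathrm{AP}$, $\mathrm{JEP}$, $\mathrm{ALM}$ for $(\mathbf{K}_2,\preccurlyeq)$.

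It remains to get Smoothness and finite character, for which I would prove the following anchor lemma, the analogue of Lemma \ref{lemma_for_smooth}: if $B\in\mathbf{K}_2$ and $T_0,T_1$ are bases for $B$ with $C^*\subseteq T_0\cap T_1$, then $T_0=T_1$. Indeed, writing $T_1=gT_0g^{-1}$ by strong rigidity, for each $v\in C^*$ both $v$ and $gvg^{-1}$ lie in $T_1$, so $gvg^{-1}=v$ since distinct basis elements of a right-angled system are never conjugate (as in Lemma \ref{lemma_for_smooth}); hence $g\in\bigcap_{v\in C^*}C_W(v)=\bigcap_{v\in C^*}W_{st(v)}=W_{\bigcap_{v\in C^*}st(v)}$ by Lemma \ref{centralizers}, and a short computation shows $\bigcap_{v\in C^*}st(v)=\emptyset$, so $g=e$ and $T_0=T_1$. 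With this lemma the two remaining properties follow exactly as in Theorem \ref{smoothness}, with $C^*$ playing the role of the configuration $P_4$: in a $\preccurlyeq$-chain $(A_i)$ below $B$ all the witnessing bases $T_i$ of $B$ contain the common copy of $C^*$ and hence coincide, so $\bigcup_i A_i\preccurlyeq B$; and for finite character, for every finite $X\supseteq C^*$ inside a fixed basis $S$ of $A$ the witnessing basis $T_X$ equals $T_{C^*}$, whence $S\subseteq T_{C^*}$ and $A\preccurlyeq B$. Combining all of the above gives that $(\mathbf{K}_2,\preccurlyeq)$ is a finitary $\mathrm{AEC}$.
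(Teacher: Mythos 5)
Your proposal is correct and follows the paper's overall strategy: reduce to Theorem \ref{almost_AEC_th} plus strong rigidity via Corollary \ref{second_strong_rig_cor} with the centering pair $s,s'$ (where $st(s)\cup st(s')=C^*$), together with the laborious case analysis for the star property and star-connectedness of $\Gamma_A$, which is indeed where the paper spends almost all of its effort. The one place where you genuinely diverge is the Smoothness/finite-character step, and here you have caught a real imprecision: the paper asserts that ``the assumptions of Theorem \ref{smoothness} are met,'' even though that theorem's triangle-freeness hypothesis fails for $\mathbf{K}_2$ (as you note, $s,t_0,t_2$ form a triangle). What the paper actually verifies are the hypotheses of the underlying Lemma \ref{lemma_for_smooth} for the path $P_4=sEs'Et_1Et_0$: $s$ is not adjacent to $t_1$, $s'$ is not adjacent to $t_0$, and $s,s'$ have no common neighbour in any basis --- triangle-freeness is used in the proof of Theorem \ref{smoothness} only to supply this last ``no common neighbour'' condition, which here is checked directly. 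So Lemma \ref{lemma_for_smooth} is in fact still available, and the paper's argument goes through with that reading; your alternative anchor lemma (forcing $g\in\bigcap_{v\in C^*}C_W(v)=W_{\bigcap_{v\in C^*}st(v)}=W_{\emptyset}$, using Lemma \ref{centralizers} and the fact that supports make $W_{S_1}\cap W_{S_2}=W_{S_1\cap S_2}$) is equally valid and arguably more robust, since it does not depend on finding a $P_4$ avoiding common neighbours but only on $\bigcap_{v\in C^*}st(v)=\emptyset$. Either way one must still rerun the chain and finite-character arguments of Theorem \ref{smoothness} verbatim with the chosen anchor, which you do. The only substantive debt left in your write-up is the full case analysis for star-connectedness and the star property of $\Gamma_A$ (Cases 1--7 and A--C in the paper), which you correctly identify as the main obstacle but do not carry out.
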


	\begin{proof} Notice that for every $\Gamma = (C^* \cup B_T, E) \in \mathbf{K}'_2$, the structure $B_T$ can be recovered from $\Gamma$, and so $(\mathbf{K}'_{2}, \leq)$ is closed under limits, it has $\mathrm{AP}$, $\mathrm{JEP}$ and $\mathrm{ALM}$, and $\mathrm{LS}(\mathbf{K}'_{2}, \preccurlyeq) = \omega$. Thus, by Theorems \ref{almost_AEC_th} and \ref{smoothness} we are left to show that every $A \in \mathbf{K}_2$ is strongly rigid, and that the assumptions of Theorem \ref{smoothness} are met. The latter is immediate, since for every $A \in \mathbf{K}_{2}$ and basis $T$ of $A$, the elements $sEs'Et_1Et_0 \in C^*$ (without loss of generality $C^*$ is in $T$) are such that $s$ is not adjacent to $t_1$, $s'$ is not adjacent to $t_0$ and there is no $t \in T$ such that $sEtEs'$. To see strong rigidity we use Corollary \ref{second_strong_rig_cor}. Let $A \in \mathbf{K}_{2}$, then the elements $s, s' \in C^*$ are such $s$ is adjacent to $s'$ in $\Gamma_A$, and $st(s) \cup st(s') = C^*$ is finite and star-connected, since for every $x \in C^*$ we have $C^* - st(x) = \{ y, z \}$, for some $y,z \in C^*$ such that $y$ is adjacent to $z$. Furthermore, clearly for every $v \in \Gamma_A$ there exists $v \neq a \in st(s) \cup st(s')$ such that $v$ is not adjacent to $a$. Thus, we are left to show that $\Gamma_A$ is star-connected and it has the star property. For ease of notation, we assume that in $\Gamma_A$ the copies of $C^*$ and $\mathbb{N}$ are actually $C^*$ and $\mathbb{N}$ (we already did this for $C^*$ above). Also, we denote by $Z$, $Z'$, etc. the copies of $\mathbb{Z}$ possibly present in $\Gamma_A$. We first show that $\Gamma_A$ has the star property. Let $a \neq b \in \Gamma_A$.
\newline {\bf Case 1.} $a,b \in C^*$. Clear.
\newline {\bf Case 2.} $a,b \in \mathbb{N}$. Without loss of generality $a < b$. If $a = 0$ and $b = 1$, then $t_0 \in st(a) - st(b)$, and $t_1 \in st(b) - st(a)$. If $a=0$ and $b=2$, then $3 \in st(2) - st(0)$ and $t_1 \in st(2) - st(0)$. If $a = 0$ and $b > 2$, then $b+1 \in st(b) - st(0)$ and $1 \in st(0) - st(b)$. If $a > 0$, then $b+1 \in st(b) - st(a)$ and $a-1 \in st(a) - st(b)$.
\newline {\bf Case 3.} $a,b \in Z$. Without loss of generality $a < b$. We have $b+1 \in st(b) - st(a)$ and $a-1 \in st(a) - st(b)$.
\newline {\bf Case 4.} $a \in C^*$ and $b \in \mathbb{N}$. If $a = s$ or $a = s'$, then $a$ is not adjacent to $b$. Let $a = t_i$, for $i < 4$. If $i$ is even and $b$ is odd, then $a$ is not adjacent to $b$. If $i$ is odd and $b$ is even, then $a$ is not adjacent to $b$. If $i$ is even and $b$ is even, then $s \in st(a) - st(b)$ and $b + 1 \in st(b) - st(a)$. If $i$ is odd and $b$ is odd, then $s' \in st(a) - st(b)$ and $b + 3 \in st(b) - st(a)$ (in the case $b = 1$ we have $1+1 = 2Et_1$). 
\newline {\bf Case 5.} $a \in C^*$ and $b \in Z$. In this case $a$ is not adjacent to $b$, unless $a = t_3$ and $b \in \{ 0_Z, -1_Z, 1_Z \}$. In this case we have $t_2 \in st(a) - st(b)$ and $3_Z \in st(b) - st(a)$.
\newline {\bf Case 6.} $a \in \mathbb{N}$ and $b \in Z$. Let $b = \pm n_Z$. If $a > n$, then $a$ is not adjacent to $b$. If $0 < a \leq n$, then $n+2 \in st(a) -st(b)$ and $a-1 \in st(b) - st(a)$. If $a = 0 = n$, then $n+2 \in st(a) -st(b)$ and $t_3 \in st(b) - st(a)$. If $a = 0 < n$, then $n+2 \in st(a) -st(b)$ and $1 \in st(b) - st(a)$.
\newline {\bf Case 7.} $a \in Z$ and $b \in Z'$. In this case $a$ is not adjacent to $b$.
\smallskip 

\noindent
We now show that $\Gamma_A$ is star-connected. Let $v \in \Gamma_A$ and $a \neq b \in \Gamma_A - st(v)$.
\newline {\bf Case A.} $v \in C^*$. If $v = s$ or $v = s'$, then it is clear that $a$ is connected to $b$. Suppose then that $v = t_i$, for $i < 4$.
\newline {\bf Case A.1.} $a,b \in C^*$. Clear.
\newline {\bf Case A.2.} $a,b \in \mathbb{N}$. Then either both $a$ and $b$ are even, or both $a$ and $b$ are odd. In either cases we are fine. %(In the case $i = 1$, $(\Gamma_A - st(v)) \cap \mathbb{N} = \{ n \in \omega : 2 \neq n, n \text{ is even} \}$, and so there are no problems.)
\newline {\bf Case A.3.} $a,b \in Z$. If $i < 3$, we have $(\Gamma_A - st(v)) \cap Z = Z$. If $i = 3$, we have $(\Gamma_A - st(v)) \cap Z = Z - \{0_Z, -1_Z, 1_Z\}$. In either cases we are fine.
\newline {\bf Case A.4.} $a \in C^*$ and $b \in \mathbb{N}$. If $a = s$ or $a = s'$, then $a$ is not adjacent to $b$. Suppose then that $a \not\in \{s,s'\}$. If $i$ is even (for $v = t_i$, remember) then $a = t_j$ is such that $j$ is odd and $b$ is odd, and so we are fine. If $i$ is odd, then $a = t_j$ is such that $j$ is even and $b$ is even, and so we are fine.  
\newline {\bf Case A.5.} $a \in C^*$ and $b \in Z$. If $i$ is even, then we can find an odd number $n \in \Gamma_A - st(v)$ that connects what is left of $C^*$ to $nEn_ZEb$. If $i$ is odd, then we can find an even number that does the same. 
\newline {\bf Case A.6.} $a \in \mathbb{N}$ and $b \in Z$. If $i$ is even, then we can find an odd number $n \in \Gamma_A - st(v)$ such that $aEnEn_ZEb$. If $i$ is odd, then we can find an even number that does the same. 
\newline {\bf Case A.7.} $a \in Z$ and $b \in Z'$. If $i$ is even, then we can find an odd number $n \in \Gamma_A - st(v)$ such that $aEn_ZEnEn_{Z'}Eb$. If $i$ is odd, then we can find an even number that does the same. 
\newline {\bf Case B.} $v \in \mathbb{N}$. 
\newline {\bf Case B.1.} $a,b \in C^*$. If $v = 2$, then $(\Gamma_A - st(v)) \cap C^* = sEs'Et_3$. If $v \neq 2$, then $(\Gamma_A - st(v)) \cap C^*$ is either $sEs'Et_1Et_3Es'$ or $s'EsEt_0Et_2Es$. In all of these cases we are fine. 
\newline {\bf Case B.2.} $a,b \in \mathbb{N}$. If $v = 0$, then $(\Gamma_A - st(v)) \cap \mathbb{N} = \{ 1 \}$, and so this case is not possible, since we are assuming that $a \neq b$. Thus, we must have that $v = n \neq 0$, and so $n-1 = aEb = n+1$.
\newline {\bf Case B.3.} $a,b \in Z$. If $v = 0$ or $v = 1$, then $(\Gamma_A - st(v)) \cap Z$ is either  $\emptyset$ or  $\{ 0_Z \}$, and so this case is not possible, since we are assuming that $a \neq b$. If $v = 2$, then $(\Gamma_A - st(v)) \cap Z = \{ 0_Z, -1_Z, 1_Z \}$, but $-1_ZEt_3E0_ZEt_3E1_Z$ and $t_3 \in \Gamma_A - st(2)$, and so we are fine. If $v > 2$, then $(\Gamma_A - st(v)) \cap Z = \{ -(v_Z-1), ..., 0_Z , ..., (v_Z-1)\}$ is ``long enough'', and so it is connected.
\newline {\bf Case B.4.} $a \in C^*$ and $b \in \mathbb{N}$. If $a = s$ or $a = s'$, then $a$ is not adjacent to $b$. Suppose then that $a \not\in \{s,s'\}$. If $v$ is odd, then $b$ is even and $a \in \{ t_0, t_2\}$, and so we are fine. If $v$ is even, then $b$ is odd and $a \in \{ t_1, t_3 \}$, and so we are fine. 
\newline {\bf Case B.5.} $a \in C^*$ and $b \in Z$. If $v = 0$, then $(\Gamma_A - st(v)) \cap Z = \emptyset$, and so this case is not possible. If $v = 1$, then $(\Gamma_A - st(v)) \cap Z = \{ 0_Z \}$ and $(\Gamma_A - st(v)) \cap C^* = \{ t_0, s, s', t_2 \}$, but then we are fine because $0_ZE0Et_0Et_2EsEs'$ and $0 \in \Gamma_A - st(1)$. If $v = 2$, then $(\Gamma_A - st(v)) \cap Z = \{ 0_Z, -1_Z, 1_Z \}$ and $(\Gamma_A - st(v)) \cap C^* = \{ t_3, s, s' \}$, but $-1_ZEt_3E0_ZEt_3E1_Z$, and so we are fine. If $v > 2$, then $(\Gamma_A - st(v)) \cap Z$ is connected, and so we can connect it to what is left of $C^*$ via $v-1 \not\in (\Gamma_A - st(v))$.
\newline {\bf Case B.6.} $a \in \mathbb{N}$ and $b \in Z$. If $v = 0$, then $(\Gamma_A - st(v)) \cap Z = \emptyset$, and so this case is not possible. If $v = 1$, then $(\Gamma_A - st(v)) \cap Z = \{ 0_Z \}$ and $(\Gamma_A - st(v)) \cap \mathbb{N} = \{ 0, 2 \}$, and $0_ZE0E2$. If $v = 2$, then $(\Gamma_A - st(v)) \cap Z = \{ 0_Z, -1_Z, 1_Z \}$ and $(\Gamma_A - st(v)) \cap \mathbb{N} = \{ 1, 3 \}$, and for $x \in \{ 0_Z, -1_Z, 1_Z \}$ and $y \in \{ 1, 3 \}$ we have $xEt_3Ey$, and so we are fine because $t_3 \in \Gamma_A -st(2)$. If $v > 2$, then $(\Gamma_A - st(v)) \cap Z$ is connected, and $v_Z-1Ev-iEv+1$.
\newline {\bf Case B.7.} $a \in Z$ and $b \in Z'$. If $v = 0$, then $(\Gamma_A - st(v)) \cap Z = \emptyset = (\Gamma_A - st(v)) \cap Z'$, and so this case is not possible. If $v = 1$, then $(\Gamma_A - st(v)) \cap Z = \{ 0_Z \}$ and $(\Gamma_A - st(v)) \cap Z = \{ 0_{Z'} \}$, and $0_ZE0E0_{Z'}$, and so we are fine because $0 \not\in \Gamma_A - st(1)$. If $v = 2$, then $(\Gamma_A - st(v)) \cap Z = \{ 0_Z, -1_Z, 1_Z \}$ and $(\Gamma_A - st(v)) \cap Z' = \{ 0_{Z'}, -1_{Z'}, 1_{Z'} \}$, and for $x \in \{ 0_Z, -1_Z, 1_Z \}$ and $y \in \{ 0_{Z'}, -1_{Z'}, 1_{Z'} \}$ we have $xEt_3Ey$, and so we are fine because $t_3 \in \Gamma_A -st(2)$. If $v > 2$, then $(\Gamma_A - st(v)) \cap Z$ and $(\Gamma_A - st(v)) \cap Z'$ are connected, and so we can connect them via $v-1 \not\in \Gamma_A - st(v)$.
\newline {\bf Case C.} $v \in Z$. Let $v = \pm n_Z$.
\newline {\bf Case C.1.} $a,b \in C^*$. We have $(\Gamma_A - st(v)) \cap C^* \subseteq C^* - \{ t_3 \}$, and so we are fine.
\newline {\bf Case C.2.} $a,b \in \mathbb{N}$. We have $(\Gamma_A - st(v)) \cap \mathbb{N} = \{ m \in \mathbb{N} : n < m \}$, and so we are fine.
\newline {\bf Case C.3.} $a,b \in Z$. In this case $v-1 = aEb = v+1$.
\newline {\bf Case C.4.} $a \in C^*$ and $b \in \mathbb{N}$. We have $(\Gamma_A - st(v)) \cap C^* \subseteq C^* - \{ t_3 \}$ and $(\Gamma_A - st(v)) \cap \mathbb{N} = \{ m \in \mathbb{N} : n < m \}$, and so we are fine.
\newline {\bf Case C.5.} $a \in C^*$ and $b \in Z$. We have $(\Gamma_A - st(v)) \cap C^* \subseteq C^* - \{ t_3 \}$ and $(\Gamma_A - st(v)) \cap Z = \{ \pm n_Z-1, \pm n_Z+1 \}$. Now,
$\pm n_Z-1E\pm n_Z+1En+1Et_j$, for some $j < 3$, and so we are fine  because $n+1 \in \Gamma_A - st(v)$ and $(\Gamma_A - st(v)) \cap C^*$ is connected.
\newline {\bf Case C.6.} $a \in \mathbb{N}$ and $b \in Z$. We have $(\Gamma_A - st(v)) \cap \mathbb{N} = \{ m \in \mathbb{N} : n < m \}$ and $(\Gamma_A - st(v)) \cap Z = \{ \pm n_Z-1, \pm n_Z+1 \}$. Now, $\pm n_Z-1E\pm n_Z+1En+1$, and $n+1$ is connected in $\Gamma_A - st(v)$ to every $x \in \{ m \in \mathbb{N} : n < m \}$.
\newline {\bf Case C.7.} $a \in Z$ and $b \in Z'$. We have $(\Gamma_A - st(v)) \cap Z = \{ \pm n_Z-1, \pm n_Z+1 \}$ and $(\Gamma_A - st(v)) \cap Z' = Z'$, and so $\pm n_Z-1E\pm n_Z+1En+1En_{Z'}+1$, and $n_{Z'}+1$ is connected in $\Gamma_A - st(v)$ to every $x \in Z'$.
\end{proof}

	\begin{theorem}\label{K2_tame} $(\mathbf{K}_{2}, \preccurlyeq)$ is tame. 
\end{theorem}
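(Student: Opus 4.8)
The plan is to deduce the tameness of $(\mathbf{K}_2, \preccurlyeq)$ from the transfer result Theorem \ref{tameness}, mirroring the proof of Theorem \ref{K1_tame}. First I would check the hypotheses of Theorem \ref{tameness}. That $(\mathbf{K}'_2, \leq)$ is an $\mathrm{AEC}$ with $\mathrm{AP}$, $\mathrm{JEP}$ and $\mathrm{ALM}$, that every $A \in \mathbf{K}_2$ is strongly rigid, and that $(\mathbf{K}_2, \preccurlyeq)$ is a finitary $\mathrm{AEC}$ with $\mathrm{LS}(\mathbf{K}_2, \preccurlyeq) = \omega$ are all established in the proof of Theorem \ref{second_finitary}. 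The remaining hypothesis, that $Aut(\mathfrak{M}/A) \leq Aut(\Gamma_{\mathfrak{M}})$ for every $A \in \mathbf{K}_2$, I would verify via Lemma \ref{lemma_for_smooth} exactly as in Theorem \ref{K1_tame}: every basis $T$ of every $A \in \mathbf{K}_2$ contains the path $sEs'Et_1Et_0$ of $C^*$, with $s$ not adjacent to $t_1$, $s'$ not adjacent to $t_0$ and no $t \in T$ with $sEtEs'$, which is precisely what was verified in Theorem \ref{second_finitary}. Hence if $\beta \in Aut(\mathfrak{M}/A)$, then $\beta$ fixes this $P_4$ pointwise, and since by strong rigidity $\beta$ carries the basis $S$ of $\mathfrak{M}$ to another basis $\beta(S)$ with $P_4 \subseteq S \cap \beta(S)$, Lemma \ref{lemma_for_smooth} forces $\beta(S) = S$, i.e. $\beta \in Aut(\Gamma_{\mathfrak{M}})$.

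By Theorem \ref{tameness} it then suffices to prove that $(\mathbf{K}'_2, \leq)$ is tame. The key observation is that each $\Gamma = (C^* \cup B_T, E) \in \mathbf{K}'_2$ is obtained from the $\{s,0\}$-structure $B_T \models T = Th(\mathbb{N}, s, 0)$ by a fixed uniform interpretation, while $B_T$ (together with the finite rigid configuration $C^*$) is recovered from $\Gamma$ in a uniform first-order way, as already used in Theorem \ref{second_finitary}. I would use this correspondence to match $(\mathbf{K}'_2, \leq)$ with the elementary class $(Mod(T), \preccurlyeq^*)$, where $\preccurlyeq^*$ is the elementary submodel relation. In particular, if $\Gamma \leq \Gamma'$ in $\mathbf{K}'_2$, then the recovered structures satisfy $B_T \subseteq B_{T'}$ as substructures of models of $T$; since $T$ is complete and model complete (standard for the theory of successor with a constant), this inclusion is elementary, so $\leq$ on $\mathbf{K}'_2$ corresponds to $\preccurlyeq^*$ on $Mod(T)$.

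It remains to note that the elementary class of a complete first-order theory is tame. Over any model the Galois types coincide with the first-order syntactic types, and if two syntactic types over $B$ differ, they differ on a single formula $\phi(x,\bar b)$ with $\bar b$ a finite tuple from $B$; any countable $A \preccurlyeq^* B$ containing $\bar b$ already separates them, which is exactly $(\omega,\lambda)$-tameness. Transporting this back through the interpretation, where the fixed finite part $C^*$ contributes only a bounded amount of data and does not affect the behaviour of types over large sets, yields the tameness of $(\mathbf{K}'_2, \leq)$, and hence, by Theorem \ref{tameness}, of $(\mathbf{K}_2, \preccurlyeq)$.

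The step I expect to be the main obstacle is the passage from the induced-subgraph relation $\leq$ on $\mathbf{K}'_2$ to the elementary submodel relation on $Mod(T)$: one must check that the graph encoding faithfully reflects the $\{s,0\}$-substructure relation and that $T = Th(\mathbb{N}, s, 0)$ is model complete, so that substructures of models are automatically elementary. Once this identification is secured, tameness is inherited from the (essentially trivial) tameness of first-order elementary classes. A secondary point requiring care is the verification of $Aut(\mathfrak{M}/A) \leq Aut(\Gamma_{\mathfrak{M}})$, but this is handled uniformly by Lemma \ref{lemma_for_smooth} as indicated above.
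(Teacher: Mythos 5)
Your first step---verifying the hypotheses of Theorem \ref{tameness} and deducing $Aut(\mathfrak{M}/A) \leq Aut(\Gamma_{\mathfrak{M}})$ from Lemma \ref{lemma_for_smooth} via the path $sEs'Et_1Et_0$ in $C^*$---is exactly what the paper does. The gap is in your second step, the tameness of $(\mathbf{K}'_{2}, \leq)$ itself. The graph $\Gamma_{B_T}$ carries strictly more information than the successor structure $B_T$: by clauses (7) and (8) of the construction, an element $\pm n_Z$ of a copy $Z$ of $\mathbb{Z}$ is adjacent to exactly the elements $0, \dots, n$ of $\mathbb{N}$, and $t_3$ marks $\{0_Z, -1_Z, 1_Z\}$, so the graph records each element's distance from the basepoint $0_Z$. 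That basepoint is \emph{not} definable in $(B_T, s, 0)$: translations of a $\mathbb{Z}$-chain are $\{s,0\}$-automorphisms, so in $Mod(T)$ all elements of chains disjoint from $B_T$ realize a \emph{single} syntactic type over $B_T$, whereas in the graph monster the elements $0_Z$ and $1_{Z'}$ (for $Z, Z'$ new chains) already have different Galois types over $\mathbb{N} \subseteq B$, having respectively one and two neighbours in $\mathbb{N}$. Consequently there is no ``fixed uniform interpretation'' producing $\Gamma_{B_T}$ from $B_T$, the correspondence between Galois types in $(\mathbf{K}'_{2},\leq)$ and syntactic types in $Mod(T)$ fails (the graph types are strictly finer), and the tameness of the elementary class $(Mod(T), \preccurlyeq^*)$ cannot be transported back as you propose: the extra, unboundedly large adjacency data between the $\mathbb{Z}$-chains and $\mathbb{N}$ is precisely what has to be controlled, and it is invisible on the $Mod(T)$ side. (Nor can you replace $T$ by the first-order theory of the graphs themselves: unlike $\mathbf{K}''_{1}$, the class $\mathbf{K}'_{2}$ is not elementary, since elementary extensions acquire nonstandard elements in the $\mathbb{N}$-part.)

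The conclusion is nevertheless true, and the repair is to argue directly with the graph types, which is what the paper does: for $a = 0_Z \pm n$ and $b = 0_{Z'} \pm m$ lying outside $B$, there is an automorphism of $\mathfrak{M}(\mathbf{K}'_{2}, \leq)$ over $B$ carrying $a$ to $b$ if and only if $n = m$, if and only if there is one over $C^* \cup \mathbb{N}$, precisely because $0_{Z''} \pm n$ is adjacent to exactly $n+1$ elements of $\mathbb{N}$ for every copy $Z''$. Hence $tp(a/B) \neq tp(b/B)$ iff $tp(a/C^* \cup \mathbb{N}) \neq tp(b/C^* \cup \mathbb{N})$, and $C^* \cup \mathbb{N}$ is a fixed countable substructure with $C^* \cup \mathbb{N} \leq B$ for every $B \in \mathbf{K}'_{2}$, which gives tameness outright.
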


	\begin{proof} Obviously, $(\mathbf{K}'_{2}, \leq)$ is an $\mathrm{AEC}$ with $\mathrm{AP}$, $\mathrm{JEP}$ and $\mathrm{ALM}$. Furthermore, by Lemma \ref{lemma_for_smooth}, for every $A \in \mathbf{K}_2$, $Aut(\mathfrak{M}/A) \leq Aut(\Gamma_{\mathfrak{M}})$. Thus, by Theorem \ref{tameness}, it suffices to show that $(\mathbf{K}'_{2}, \leq)$ is tame. We show tameness of $(\mathbf{K}'_{2}, \leq)$ for elements, the argument generalizes to tuples. Let $B \in \mathbf{K}'_{2}$ and assume that in $B$ the copies of $C^*$ and $\mathbb{N}$ are actually $C^*$ and $\mathbb{N}$. Let $a, b \in \mathfrak{M}(\mathbf{K}'_{2}, \leq) - B$. Then $a$ and $b$ lie in some of the copies of $\mathbb{Z}$ not in $B$, say $a$
is in $Z$ and $b$ is in $Z'$. Let $a = 0_Z \pm n$ and $b = 0_{Z'} \pm m$. Notice that there
is $\alpha \in Aut(\mathfrak{M}/B)$ mapping $a$ to $b$ iff $n =
m$ iff there is $\alpha \in Aut(\mathfrak{M}/C^* \cup \mathbb{N})$ mapping $a$ to $b$. In fact,
for every copy $Z''$ of  $\mathbb{Z}$ we have that $0_{Z''} \pm n$ is adjacent
to exactly $n+1$ elements from $\mathbb{N}$. It follows that $tp(a/B) \neq tp(b/B)$ iff $tp(a/C^* \cup \mathbb{N}) \neq tp(b/C^* \cup \mathbb{N})$, and so $(\mathbf{K}'_{2}, \leq)$  is tame, because $C^* \cup \mathbb{N} \leq B$.
\end{proof}

	\begin{theorem}\label{second_cate} \begin{enumerate}[(a)] 
	\item $(\mathbf{K}_{2}, \preccurlyeq)$ is not homogeneous.
	\item $(\mathbf{K}_{2}, \preccurlyeq)$ is uncountably categorical.
\end{enumerate}
\end{theorem}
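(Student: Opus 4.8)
The plan is to prove (a) by exhibiting a half-graph configuration inside $\Gamma_{\mathfrak{M}}$ and then mimicking the non-homogeneity argument of Theorem \ref{firt_non_homog}(a), and to prove (b) by reducing the uncountable categoricity of $(\mathbf{K}_2, \preccurlyeq)$ to the uncountable categoricity of the first-order theory $T = Th(\mathbb{N}, s, 0)$.

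For (a), as anticipated in the Remark following Theorem \ref{firt_non_homog}, I would reuse the half-graph. Working in $\Gamma_{\mathfrak{M}} = \mathfrak{M}(\mathbf{K}'_2, \leq)$, whose underlying $B_T$ is an uncountable (hence $\mathbb{Z}$-rich) model of $T$, I fix one copy $Z$ of $\mathbb{Z}$ and set $t_i = 2i \in \mathbb{N}$ and $a_i = (2i)_Z \in Z$ for $i < \omega$. By conditions (5)--(7) of the construction of $\Gamma_{B_T}$, both $(t_i)_{i<\omega}$ and $(a_i)_{i<\omega}$ are independent sets and $a_i$ is adjacent to $t_j$ iff $j \leq i$; that is, exactly the configuration of Theorem \ref{firt_non_homog}(a). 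Setting $c_i = a_0 \cdots a_{i-1} t_i a_{i-1} \cdots a_0$ and using that $a_j$ commutes with $t_i$ whenever $i \leq j$, conjugation by $g = a_0 \cdots a_{k-1}$ sends $t_i \mapsto c_i$ for every $i < k$, so every finite subtuple of $(t_i)$ and $(c_i)$ has the same Galois type.

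To finish (a) I would argue that no $\alpha \in Aut(\mathfrak{M})$ satisfies $\alpha(t_i) = c_i$ for all $i$. Since $\mathfrak{M} \in \mathbf{K}_2$ is strongly rigid (Theorem \ref{K2_fin}), applying the definition of strong rigidity to the basis $S$ (the vertex set of $\Gamma_{\mathfrak{M}}$) and the basis $\alpha(S)$ yields $g \in \mathfrak{M}$ with $\alpha(S) = gSg^{-1}$; hence $\alpha$ equals conjugation by $g$ composed with a graph automorphism $\beta \in Aut(\Gamma_{\mathfrak{M}})$, so $\alpha(t_i) = g\,\beta(t_i)\,g^{-1}$. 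As $\beta(t_i)$ is a single vertex, $|sp(\alpha(t_i))| \leq |sp(g)| + 1$ is bounded, whereas $sp(c_i) = \{a_0, \dots, a_{i-1}, t_i\}$ has size $i+1 \to \infty$, a contradiction. Thus $tp((t_i)_{i<\omega}) \neq tp((c_i)_{i<\omega})$ while all finite subtuples agree, so $(\mathbf{K}_2, \preccurlyeq)$ is not homogeneous.

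For (b), the key point is that $B_T \mapsto \Gamma_{B_T}$ is a bijection on isomorphism types. By Castella's rigidity theorem the group $A$ determines $\Gamma_A$ up to graph-isomorphism, and conversely $\Gamma_A \cong \Gamma_{A'}$ implies $A \cong A'$; moreover, as noted in the proof of Theorem \ref{K2_fin}, $B_T$ is recoverable from $\Gamma_{B_T}$ and the construction is canonical over the fixed finite graph $C^*$, so $\Gamma_{B_T} \cong \Gamma_{B_{T'}}$ iff $B_T \cong B_{T'}$. Hence $A \cong A'$ iff $B_T \cong B_{T'}$, and since $|A| = |\Gamma_A| = |B_T|$ for infinite $A$, models of $\mathbf{K}_2$ of uncountable size $\kappa$ correspond exactly to models of $T$ of size $\kappa$. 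It then suffices to recall that $T = Th(\mathbb{N}, s, 0)$ is uncountably categorical: every model is $\mathbb{N}$ together with $\lambda$ disjoint copies of $\mathbb{Z}$, a model of size $\kappa > \aleph_0$ forces $\lambda = \kappa$, and so there is a unique model of each uncountable size. Therefore there is a unique $A \in \mathbf{K}_2$ of each uncountable cardinality, which is (b). I expect the main obstacle to be the ``no automorphism'' half of (a) -- making precise that strong rigidity forces every automorphism of $\mathfrak{M}$ to be inner-composed-with-graph-automorphism and thereby bounding the supports $sp(\alpha(t_i))$ uniformly; the configuration existence and the cardinality bookkeeping in (b) are routine given the recoverability of $B_T$ from $\Gamma_{B_T}$.
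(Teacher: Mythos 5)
Your proof is correct and follows essentially the same route as the paper: part (a) reuses the half-graph configuration and the support-growth argument of Theorem \ref{firt_non_homog}(a) (the paper takes $t'_i = i$ and $a_i = i_Z$, you take the even-indexed variants, which if anything satisfies the independence clauses more literally), and part (b) is the same reduction to the uncountable categoricity of $Th(\mathbb{N}, s, 0)$ via rigidity and the recoverability of $B_T$ from $\Gamma_{B_T}$. Your expansion of the ``no global automorphism'' step --- decomposing $\alpha$ as conjugation composed with a graph automorphism by strong rigidity and bounding $|sp(\alpha(t_i))|$ by $|sp(g)|+1$ against $|sp(c_i)| = i+1$ --- is a faithful filling-in of the argument the paper only sketches.
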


	\begin{proof} The proof of (a) is as in the proof of Theorem \ref{firt_non_homog}(a). In fact letting $t'_i = i$ and $a_i = i_Z$, for $Z$ a copy of $\mathbb{Z}$, we have that the argument used in the proof of Theorem \ref{firt_non_homog}(a) works also in this case (where the role of the $t_i$'s there is played by the $t'_i$'s here). Uncountable categoricity is also immediate, since for $C,D \in \mathbf{K}_{2}$ we have $(A \cup B_T, E) = \Gamma_C \cong \Gamma_D = (A' \cup B'_T, E')$ iff $B_T \cong B'_T$ (in the language $\{ 0, s \}$), and $T = Th(\mathbb{N}, s, 0)$ is well-known to be uncountably categorical.
\end{proof}

	\begin{corollary} $(\mathbf{K}_{2}, \preccurlyeq)$ is stable in every infinite cardinality.
\end{corollary}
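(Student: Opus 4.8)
The plan is to deduce the statement as an immediate formal consequence of the results already established, in exact parallel to the corresponding corollary proved above for $(\mathbf{K}_1, \preccurlyeq)$. No new mathematical content is needed here; the substantive work has already been carried out in the theorems preceding this corollary, and what remains is simply to check that their conclusions combine to feed correctly into the abstract transfer principle of Theorem \ref{cate_implies_stability}.

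First I would invoke Theorem \ref{K2_fin}, which asserts that $(\mathbf{K}_2, \preccurlyeq)$ is a finitary $\mathrm{AEC}$. Unwinding the definition of finitarity, this yields at once that $\mathrm{LS}(\mathbf{K}_2, \preccurlyeq) = \omega$ and that the class enjoys the amalgamation property, the joint embedding property, and arbitrarily large models. These are precisely the standing hypotheses required by Theorem \ref{cate_implies_stability}, and they also guarantee the existence of a monster model $\mathfrak{M}(\mathbf{K}_2, \preccurlyeq)$ relative to which stability is formulated.

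Next I would invoke Theorem \ref{second_cate}(b), namely that $(\mathbf{K}_2, \preccurlyeq)$ is uncountably categorical. With the $\mathrm{AP}$, $\mathrm{JEP}$, and $\mathrm{ALM}$ just secured, the hypotheses of Theorem \ref{cate_implies_stability} are met in full, and that theorem then delivers stability in every infinite cardinality $\lambda \geq \mathrm{LS}(\mathbf{K}_2, \preccurlyeq) = \omega$, i.e. in every infinite cardinality, which is exactly the assertion to be proved.

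I do not expect any genuine obstacle at this final step: the corollary is a one-line deduction from Theorems \ref{K2_fin}, \ref{second_cate}, and \ref{cate_implies_stability}. The real difficulty lies entirely upstream, in verifying via Corollary \ref{second_strong_rig_cor} and the lengthy case analysis of Theorem \ref{K2_fin} that the concrete graphs of $\mathbf{K}'_2$ give rise to strongly rigid Coxeter groups forming a finitary class, and in reducing the uncountable categoricity of $(\mathbf{K}_2, \preccurlyeq)$ to the uncountable categoricity of $Th(\mathbb{N}, s, 0)$ in Theorem \ref{second_cate}(b). Once those ingredients are in hand, the stated stability is purely a matter of citation.
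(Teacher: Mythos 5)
Your proposal is correct and is exactly the paper's own argument: the paper likewise derives the corollary by combining Theorem \ref{second_finitary} (= Theorem \ref{K2_fin}), Theorem \ref{second_cate}, and Theorem \ref{cate_implies_stability}. Nothing further is needed.
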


	\begin{proof} This is a consequence of Theorems \ref{second_finitary}, \ref{second_cate} and \ref{cate_implies_stability}.
\end{proof}

	\begin{corollary} $\mathbf{K}_2 = Mod(\theta)$ for some $\theta \in L_{\omega_1, \omega}$. Furthermore, for every $A \preccurlyeq \mathfrak{M}$ we have $tp_{\omega_{1}, \omega}(a/A) = tp_{\omega_{1}, \omega}(b/A) \text{ iff } tp(a/A) = tp(b/A)$.
\end{corollary}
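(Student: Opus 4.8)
The plan is to prove the two assertions separately, in each case assembling the model-theoretic properties of $(\mathbf{K}_2, \preccurlyeq)$ already established and feeding them into the two theorems of Kueker. This is the exact $\mathbf{K}_2$-analogue of the corresponding corollary for $(\mathbf{K}_1, \preccurlyeq)$, so I expect the argument to be essentially a matter of verifying the hypotheses of Theorems \ref{kueker1} and \ref{kueker2}.

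For the first assertion I would invoke Theorem \ref{kueker1}. By Theorem \ref{K2_fin} the class $(\mathbf{K}_2, \preccurlyeq)$ is a finitary $\mathrm{AEC}$, and its vocabulary (the group language) is finite, hence countable. It remains to check the counting hypothesis with $\lambda = \omega$. Here I would use that a structure $A \in \mathbf{K}_2$ is determined up to isomorphism by the underlying model $B_T \models T = Th(\mathbb{N}, s, 0)$, since $\Gamma_A = (C^* \cup B_T, E)$ and $B_T$ is recoverable from $\Gamma_A$ (as noted in the proof of Theorem \ref{K2_fin}). As the models of $T$ are classified up to isomorphism by the number of $\mathbb{Z}$-chains they contain, a countable such model corresponds to a count in $\{0, 1, 2, \dots, \aleph_0\}$; hence $\mathbf{K}_2$ has at most countably many countable models. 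Theorem \ref{kueker1} then yields $\mathbf{K}_2 = Mod(\theta)$ with $\theta \in L_{\infty, \omega}$. To upgrade this to $L_{\omega_1, \omega}$ I would note that every $B_T$ is infinite, so $\mathbf{K}_2$ has no finite models at all; the additional hypothesis of Theorem \ref{kueker1} (``at most $\lambda$ models of cardinality $< \lambda$'') is therefore vacuously satisfied at $\lambda = \omega$, giving $\theta \in L_{\omega_1, \omega}$.

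For the second assertion I would apply Theorem \ref{kueker2}, which requires $(\mathbf{K}_2, \preccurlyeq)$ to be finitary, tame, of countable vocabulary, and $\omega$-stable. The first three are provided by Theorems \ref{K2_fin} and \ref{K2_tame} (the vocabulary being finite as above). For $\omega$-stability I would use uncountable categoricity, established in Theorem \ref{second_cate}, together with Theorem \ref{cate_implies_stability}: the latter gives stability in every infinite cardinality $\lambda \geq \mathrm{LS}(\mathbf{K}_2, \preccurlyeq) = \omega$, in particular stability at $\omega$. With all four hypotheses in hand, Theorem \ref{kueker2} delivers the coincidence $tp_{\omega_1, \omega}(a/A) = tp_{\omega_1, \omega}(b/A)$ iff $tp(a/A) = tp(b/A)$ for every $A \preccurlyeq \mathfrak{M}$.

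I do not expect any genuine obstacle here: the entire corollary is an immediate consequence of earlier results once the hypotheses are verified. The only piece of real content is the bookkeeping in the first assertion --- the classification of countable models of $T = Th(\mathbb{N}, s, 0)$ by the number of $\mathbb{Z}$-chains and the observation that $\mathbf{K}_2$ contains no finite models --- and even this is routine.
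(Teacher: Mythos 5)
Your proposal is correct and follows exactly the paper's route: the paper also deduces the corollary immediately from Theorems \ref{kueker1}, \ref{kueker2}, \ref{K2_fin}, \ref{K2_tame} and \ref{second_cate} together with the observation that $\mathbf{K}_2$ has at most countably many countable models. The extra details you supply (counting countable models of $Th(\mathbb{N},s,0)$ by the number of $\mathbb{Z}$-chains, the absence of finite models for the $L_{\omega_1,\omega}$ upgrade, and obtaining $\omega$-stability from uncountable categoricity via Theorem \ref{cate_implies_stability}) are precisely the verifications the paper leaves implicit.
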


\begin{proof} This is an immediate consequence of Theorems \ref{kueker1}, \ref{kueker2}, \ref{K2_fin}, \ref{K2_tame} and \ref{second_cate}, together with the easy observation that $\mathbf{K}_2$ has at most countably many countable models.
\end{proof}

	We conclude the paper with the following open problem.
	
	\begin{oproblem} Find combinatorial conditions on $\Gamma_A$ which are necessary and sufficient for the strong rigidity of an arbitrary right-angled Coxeter group $A$, and use them to develop the model theory of strongly rigid right-angled Coxeter groups, in the style of the present paper.
\end{oproblem}

\end{document}